\theoremstyle{plain}
\newtheorem{theorem}{Theorem}[section]
\newtheorem{corollary}[theorem]{Corollary}
\newtheorem{prop}[theorem]{Proposition}
\newtheorem{lemma}[theorem]{Lemma}
\theoremstyle{definition}
\newtheorem{remark}[theorem]{Remark}
\newtheorem{conj}[theorem]{Conjecture}  
\theoremstyle{plain}
\newenvironment{customthm}[1]
  {\innercustomthm}
  {\endinnercustomthm}
\newcommand{\inte}{\mathbb{Z}}
\newcommand{\rat}{\mathbb{Q}}
\newcommand{\nat}{\mathbb{N}}
\newcommand{\real}{\mathbb{R}}
\newcommand{\prob}{\mathbb{P}}
\newcommand{\expt}{\mathbb{E}}
\newcommand{\indic}{\mathbbm{1}}
\newcommand{\floor}[1]{{\left\lfloor #1 \right\rfloor}}
\newcommand{\ceil}[1]{{\left\lceil #1 \right\rceil}}
\newcommand{\sset}{\subset}
\newcommand{\al}{\alpha}
\newcommand{\Om}{\Omega}
\newcommand{\mathforall}{\text{ for all }}
\newcommand{\mathand}{\;\text{and}\;}
\newcommand{\mathas}{\;\text{as}\;}
\newcommand{\mathsuchthat}{\;\text{such that}\;}
\newcommand{\Ga}{\Gamma}
\newcommand{\ep}{\epsilon}
\newcommand{\de}{\delta}
\newcommand{\De}{\Delta}
\newcommand{\sig}{\sigma}
\newcommand{\scrA}{\mathcal{A}}
\newcommand{\scrD}{\mathcal{D}}
\newcommand{\scrM}{\mathcal{M}}
\newcommand{\scrF}{\mathcal{F}}
\newcommand{\card}[1]{\left\vert #1 \right\vert}
\newcommand{\supp}{\text{supp}}
\newcommand{\Z}{\mathds{Z}}
\newcommand{\ddd}{\mathellipsis}
\newcommand{\Var}{\text{\fontfamily{ppl}\selectfont Var}}
\newcommand{\eqd}{\stackrel{d}{=}}
\newcommand{\cvgd}{\stackrel{d}{\to}}
\newcommand{\cvgp}{\stackrel{\prob}{\to}}
\newcommand{\X}{\times}
\newcommand{\rev}{\text{rev}}
\newcommand{\id}{\text{id}}
\newcommand{\as}{\text{almost surely}}
\newcommand{\lf}{\left}
\newcommand{\rg}{\right}
\author{Duncan Dauvergne and B\'alint Vir\'ag}                
\title{Circular support in random sorting networks}
\begin{document}
\maketitle

\FloatBarrier
\begin{abstract}
A sorting network is a shortest path from $12 \cdots n$ to $n \cdots 2 1$ in the Cayley graph of the symmetric group generated by adjacent transpositions. For a uniform random sorting network, we prove that in the global limit, particle trajectories are supported on $\pi$-Lipschitz paths. We show that the weak limit of the permutation matrix of a random sorting network at any fixed time is supported within a particular ellipse. This is conjectured to be an optimal bound on the support. We also show that in the global limit, trajectories of particles that start within distance $\ep$ of the edge are within $\sqrt{2\ep}$ of a sine curve in uniform norm. 
\end{abstract}

\begin{figure}[H]
   \centering
   \includegraphics[scale= 1]{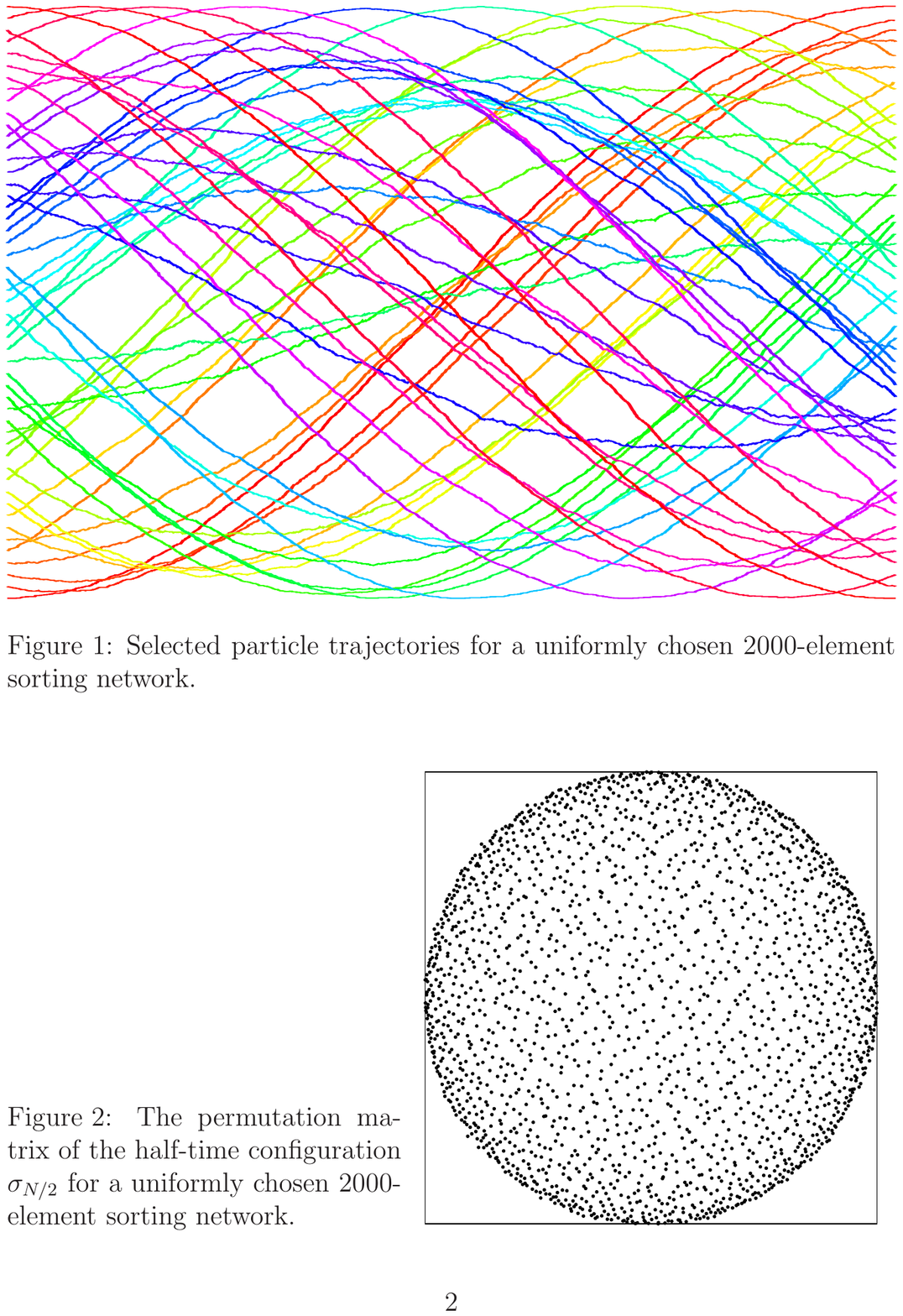}
   \caption{The permutation matrix of the half-time permutation for a $2000$ element sorting network. We prove that in the weak limit, the support of the half-time permutation matrix lies inside the unit disk. This figure originally appeared in \cite{angel2007random}.}
   \label{fig:halfway}
\end{figure}

\section{Introduction}

Consider the Cayley graph $\Ga(S_n)$ of the symmetric group $S_n$ with generators given by adjacent transpositions $\pi_i = (i, i + 1), i \in \{1, \ddd n -1\}$. A {\bf sorting network} is a minimal length path in $\Ga(S_n)$ from the identity permutation $\id_n = 1 2 \cdots n$ to the reverse permutation $\rev_n = n \cdots 2 1$. The length of such paths is $N = {n \choose 2}$. 

\medskip

Sorting networks are also known as {\bf reduced decompositions} of the reverse permutation, as any sorting network can equivalently be represented as a minimal length decomposition of the reverse permutation as a product of adjacent transpositions: $\rev_n = \pi_{k_N} \ddd \pi_{k_1}$. In this setting, the path in the Cayley graph is the sequence 
$$
\lf\{ \pi_{k_i} \cdots \pi_{k_2} \pi_{k_1}: i \in \lf\{0, \ddd N \rg\} \rg\} .
$$
The combinatorics of sorting networks have been studied in detail under this name. There are connections between sorting networks and Schubert calculus, quasisymmetric functions, zonotopal tilings of polygons, and aspects of representation theory. For more background in this direction, see Stanley \cite{stanley1984number}; Bjorner and Brenti \cite{bjorner2006combinatorics}; Garsia \cite{garsia2002saga}; Tenner \cite{tenner2006reduced}; and Manivel \cite{manivel2001symmetric}.
%
%
%
%
%

\medskip

In computer science, sorting networks are viewed as $N$-step algorithms for sorting a list of $n$ numbers. At step $i$ of the algorithm, we sort the elements at positions $k_i$ and $k_i + 1$ into increasing order. This process sorts any list in $N$ steps.

\medskip

In order to understand the geometry of sorting networks, we think of the numbers $\{1, \ddd, n \}$ as labelled particles being sorted in time (see Figure \ref{fig:wiring}).
We use the notation $\sig(x, t) = \pi_{k_\floor{t}} \ddd \pi_{k_2}\pi_{k_1}(x)$ for the position of particle $x$ at time $t$. 

\begin{figure}
   \centering
   \includegraphics[scale=0.7]{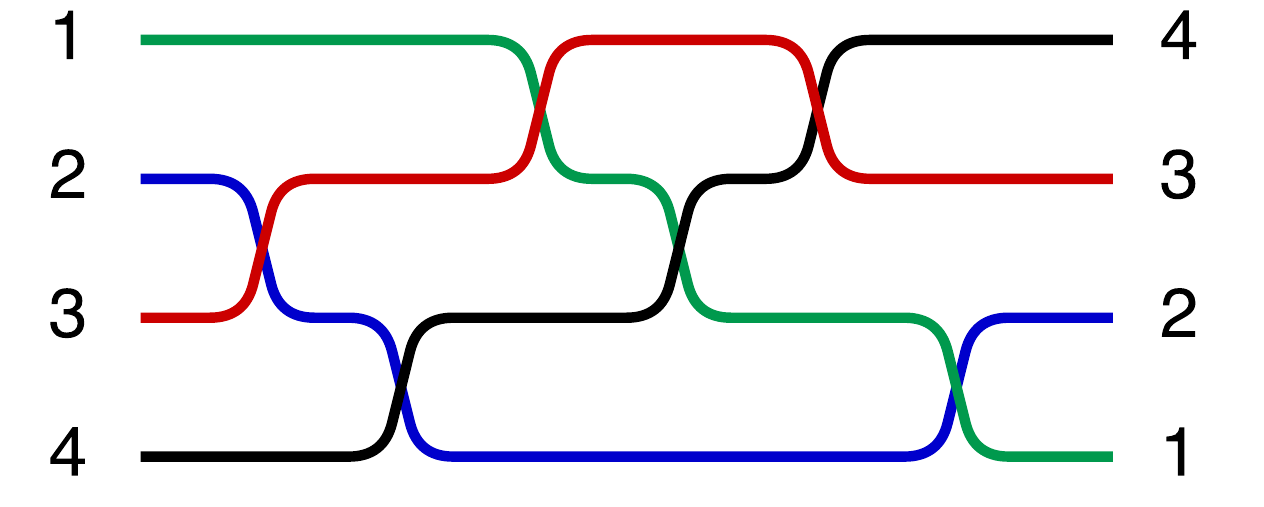}
   \caption{A ``wiring diagram" for a sorting network with $n = 4$. In
     this diagram, trajectories are drawn as continuous curves for
     clarity.}
   \label{fig:wiring}
\end{figure}

\medskip

Angel, Holroyd, Romik, and Vir\'ag \cite{angel2007random} initiated the study of uniform random sorting networks. Based on numerical evidence, they made striking conjectures about their global behaviour.
\medskip

 Their first conjecture concerns the rescaled trajectories of a uniform random sorting network. In this rescaling, space is scaled by $2/n$ and shifted so that particles are located in the interval $[-1, 1]$. Time is scaled by $1/N$ so that the sorting process finishes at time $1$. Specifically, we define the {\bf global trajectory} of particle $x$ by 
$$
\sig_G(x, t) = \frac{2\sig(x, Nt)}n - 1.
$$ 
In \cite{angel2007random}, the authors conjectured that global trajectories converge to sine curves (see Figure \ref{fig:sinecurves}). They proved that limiting trajectories are H\"older-$1/2$ continuous with H\"older constant $\sqrt{8}$. To precisely state their conjecture, we use the notation $\sig^n$ for an $n$-element uniform random sorting network.
\begin{conj}
\label{CJ:sine-curves}
For each $n$ there exist random variables $\{(A^n_x, \Theta^n_x) : x = 1, \ddd n \}$ such that for any $\ep > 0$, 
$$
\prob \lf( \max_{x \in [1, n] } \sup_{t \in [0, 1]} \card{ \sig^n_G(x, t) - A_x^n \sin(\pi t + \Theta_x^n) } > \ep \rg) \to 0 \qquad \mathas n \to \infty.
$$
\end{conj}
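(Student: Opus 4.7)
The plan is to combine the three main results of this paper---$\pi$-Lipschitz trajectories, ellipse support of the time-$t$ permutation matrix, and the sine-curve approximation near the edge---with a rigidity argument that upgrades pointwise containment in an ellipse to pathwise approximation by sine curves. The strategy has three stages: extract a subsequential limit by tightness, use the ellipse support to define time-dependent amplitude and phase for each trajectory, and finally prove that these amplitude and phase functions are almost surely constant in time.

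Stages one and two are essentially automatic from the stated results. The $\pi$-Lipschitz bound shows that, for $x_n/n \to u \in [-1,1]$, the random paths $t \mapsto \sig^n_G(x_n, t)$ are tight in $C([0,1])$, so one can pass to a weak limit $t \mapsto \sig_\infty(u,t)$ along a subsequence (with joint tightness for any finite collection of starting points). For the content of the ellipse support: a point $(x,y)$ lies in the limiting support ellipse at time $t$ iff there exist $A \le 1$ and $\Theta$ with $x = A\sin\Theta$ and $y = A\sin(\pi t + \Theta)$, equivalently $x^2 + (y - x\cos(\pi t))^2/\sin^2(\pi t) \le 1$. One can therefore define time-dependent amplitude and phase $A(u,t), \Theta(u,t)$ via $u = A(u,t)\sin\Theta(u,t)$ and $\sig_\infty(u,t) = A(u,t)\sin(\pi t + \Theta(u,t))$, and the conjecture becomes equivalent to the statement that $(A(u,t), \Theta(u,t))$ is almost surely constant in $t$ for every $u$.

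The main step is proving this constancy. I would try to bootstrap from the edge: particles with $u$ near $\pm 1$ are already pinned into a narrow band by the edge theorem, and their trajectories are genuinely rigid because the $\pi$-Lipschitz bound nearly saturates the ellipse envelope only at the boundary. To propagate the rigidity inward, I see two routes. The first is to exploit the combinatorial symmetries of sorting networks: the Edelman--Greene bijection provides an exchangeability structure, and composing time reversal $t \mapsto 1-t$ with space reflection $x \mapsto -x$ carries trajectories to trajectories, so iterating these symmetries together with ellipse containment at every dyadic $t$ may force $A(u,t)$ to be constant. The second is to derive a local limit for the swap process in the bulk: if the infinitesimal swap rate near $(u, \sig_\infty(u,t))$ matches that of the sine trajectory through this point, then $\sig_\infty(u,\cdot)$ solves an ODE whose only solutions are sine curves, and the boundary conditions $\sig_\infty(u,0) = u$, $\sig_\infty(u,1) = -u$ fix $A_u$ and $\Theta_u$.

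The hard part will be producing this extra rigidity input. The $\pi$-Lipschitz bound combined with membership in the ellipse at every time is genuinely insufficient on its own: at $u=0$, for instance, the constraint $|\sig_\infty(0,t)| \le \sin(\pi t)$ admits many $\pi$-Lipschitz paths besides $\sin(\pi t + \Theta)$. A real proof will therefore require a new estimate, most plausibly a local statistics theorem for the staircase standard Young tableau produced by the Edelman--Greene correspondence, or a coupling with an exactly solvable model whose trajectories are known to be sine curves. Once such an input is in hand, the uniform convergence asserted in the conjecture should follow from the tightness in the first stage by a routine equicontinuity argument.
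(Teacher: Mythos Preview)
The statement you are attempting to prove is Conjecture~\ref{CJ:sine-curves}, and the paper does \emph{not} prove it. The paper explicitly presents it as an open conjecture from \cite{angel2007random}, proves the weaker Theorems~\ref{T:main}, \ref{T:main-3}, and \ref{T:main-2} as partial progress, and states in the ``Further Work'' section that the full conjecture is established only in the subsequent paper \cite{dauvergne3}. There is therefore no ``paper's own proof'' to compare against.

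You already recognize the central gap yourself: the $\pi\sqrt{1-y^2}$-Lipschitz bound together with ellipse containment at every fixed time does not force trajectories to be sine curves. Your example at $u=0$ is exactly right---the constraints admit a continuum of non-sinusoidal paths---so stages one and two of your plan cannot be upgraded to the conjecture without a genuinely new input. Your first proposed route (iterating the discrete symmetries of sorting networks) does not produce new constraints beyond what the ellipse bound already gives, since those symmetries are already baked into the ellipse family. Your second route (deriving an ODE from local swap rates) is closer in spirit to what actually works, but the paper's results stop at the existence and boundedness of local speeds (Theorem~\ref{T:main-local}) and the swap-rate formula (Theorem~\ref{T:swap-rate}); they do not identify the local speed distribution $\mu$, which is what would be needed to pin down the ODE. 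In short, your proposal is an honest outline of why the conjecture is plausible given this paper's results, together with a correct diagnosis that an additional ingredient is required---but it is not a proof, and the paper does not claim one.
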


\begin{figure}
   \centering
   \includegraphics[scale=0.8]{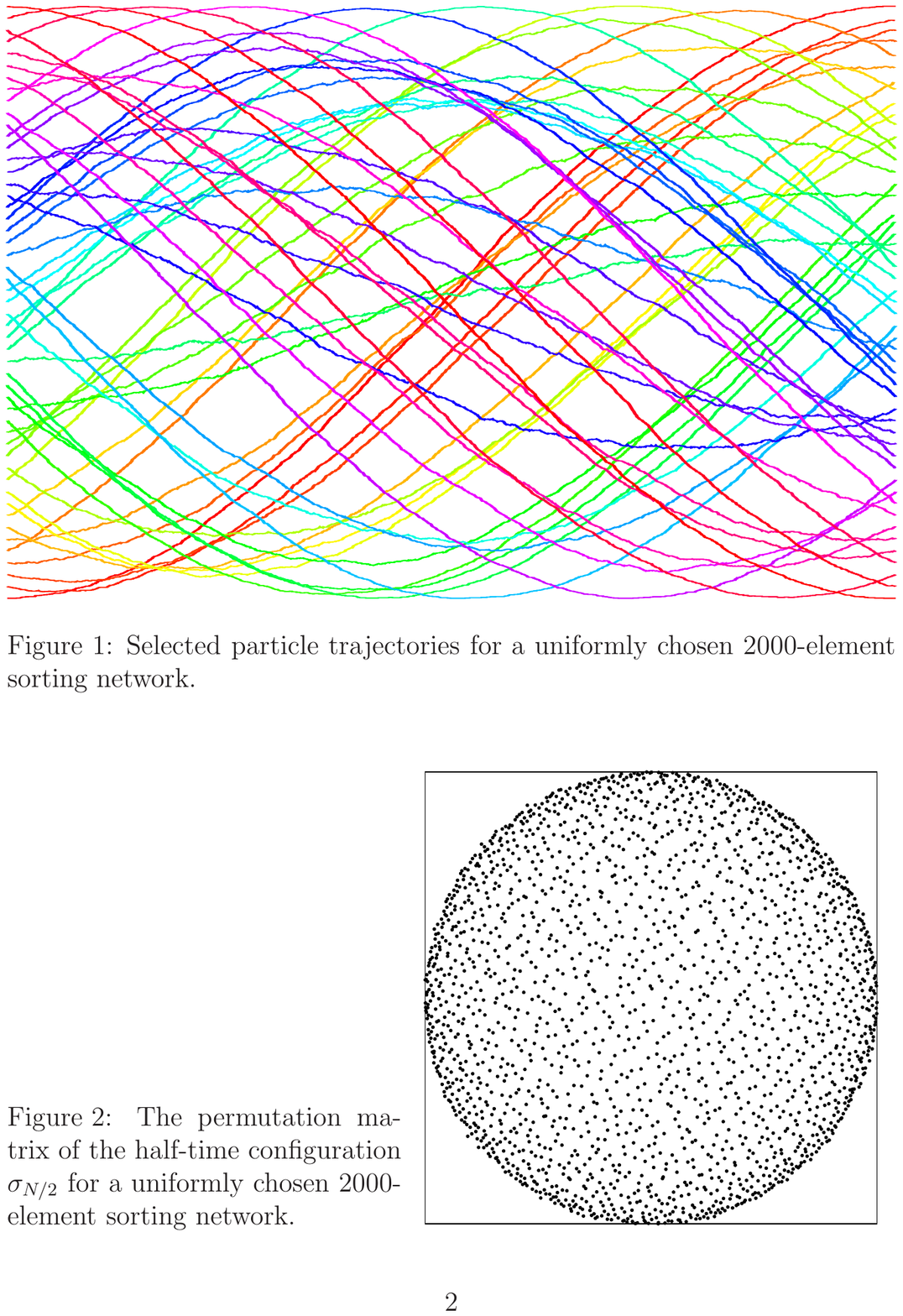}
   \caption{A diagram of selected particle trajectories in a 2000 element sorting network. This image is taken from \cite{angel2007random}.}
   \label{fig:sinecurves}
\end{figure}

Their second conjecture concerns the time-$t$ permutation matrices of a uniform sorting network. First, let the Archimedean measure $\mathfrak{Arch}_{1/2}$ on the square $[-1, 1]^2$ be the probability measure with Lebesgue density
$$
f(x, y) = \frac{1}{2\pi \sqrt{1 - x^2 - y^2}}
$$
on the unit disk, and $0$ outside. The measure $\mathfrak{Arch}_{1/2}$ is the projected surface area measure of the $2$-sphere.
For general $t$, define $\mathfrak{Arch}_t$ to be the distribution of
$$
(X, X\cos(\pi t) + Y \sin (\pi t)), \qquad \text{where } (X, Y) \sim \mathfrak{Arch}_{1/2}.
$$ 
In \cite{angel2007random}, the authors conjectured that the time-$t$ permutation matrix of a uniform sorting network converges to $\mathfrak{Arch}_t$ (see Figure \ref{fig:circles}). They proved that for any $t$, the support of the time-$t$ permutation matrix is contained in a particular octagon with high probability.
\begin{conj}
\label{CJ:matrices}
Consider the random measures 
\begin{equation}
\label{E:eta-n-t}
\eta^n_t = \frac{1}n \sum_{i = 1}^n \delta(\sig^n_G(i, 0), \sig^n_G(i, t)).
\end{equation}
Here $\delta(x, y)$ is a $\delta$-mass at $(x, y)$. Then for any $t \in [0, 1]$,
$$
\eta^n_t \to \mathfrak{Arch}_t \qquad \text{in probability in the weak topology}.
$$
That is, for any weakly open neighbourhood $O$ of $\mathfrak{Arch}_t$, $\prob(\eta^n_t \in O) \to 1$ as $n \to \infty.$
\end{conj}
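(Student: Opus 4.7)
The plan is to split the conjecture into three stages: (I) tightness of the sequence of laws of $\eta^n_t$, (II) showing every weak subsequential limit is supported inside the elliptical disc $E_t = \{(x,y) \in [-1,1]^2 : x^2 \sin^2(\pi t) + (y - x \cos(\pi t))^2 \le \sin^2(\pi t)\}$, which coincides with the support of $\mathfrak{Arch}_t$, and (III) identifying any such limit uniquely as $\mathfrak{Arch}_t$. Stage (I) is automatic because $\eta^n_t$ is supported on the compact square $[-1,1]^2$, so subsequential weak limits always exist and it suffices to show all such limits agree.

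\textbf{Stage (II): the ellipse support.} The $\pi$-Lipschitz result stated in the abstract, combined with the boundary conditions $\sigma^n_G(x,0) = x$ and $\sigma^n_G(x,1) \approx -x$, only yields the parallelogram bound $\{|y-x| \le \pi t\} \cap \{|y+x| \le \pi(1-t)\}$, which is strictly larger than $E_t$. To sharpen this I would use a local renormalization. For an interior starting position $x_0 \in (-1,1)$ and a small $\delta > 0$, consider the sub-sorting network induced by the particles whose initial indices lie in a window of size $2\delta n$ around $x_0$. Up to boundary errors controlled by the Lipschitz bound, rescaling space and time by $\delta$ should make the restricted dynamics close in distribution to a uniform random sorting network on $2\delta n$ particles, and the tagged particle starts near the \emph{edge} of this smaller network. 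The edge-sine-curve bound from the abstract then applies and confines the tagged trajectory to within $O(\sqrt{\delta})$ of a sine of amplitude at most $1$ passing through $x_0$ at time $0$. Letting $\delta \to 0$, taking the envelope over the unknown phase, and evaluating at time $t$ forces the time-$t$ position of a particle starting at $x_0$ into the vertical slice $E_t \cap (\{x_0\} \times \real)$, so the support of any subsequential limit $\mu$ lies in $E_t$.

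\textbf{Stage (III): identifying the limit.} This is the main obstacle. Any limit $\mu$ inherits uniform marginals on $[-1,1]$ in both coordinates from the doubly-stochastic structure of $\eta^n_t$, but having support in $E_t$ together with uniform marginals does not single out $\mathfrak{Arch}_t$. The most natural route is to prove Conjecture \ref{CJ:sine-curves} first and then push it forward: if individual trajectories concentrate on sines $A^n_x \sin(\pi s + \Theta^n_x)$ with $A^n_x \sin \Theta^n_x = x$ and the joint law of $(A^n_x, \Theta^n_x)$ converges to the amplitude-phase distribution associated to $\mathfrak{Arch}_{1/2}$, then Conjecture \ref{CJ:matrices} is immediate. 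An alternative is to characterize $\mathfrak{Arch}_t$ as the unique measure on $E_t$ with uniform marginals that is invariant under a one-parameter family of time-shifts inherited from the sorting dynamics, and then verify that $\mu$ carries this invariance. Neither route appears to follow from the techniques stated in the abstract: the first requires a pathwise law of large numbers for individual trajectories, the second a soft characterization theorem for $\mathfrak{Arch}_t$ plus a dynamical invariance for $\mu$. This identification step is where the genuine difficulty lies, and it is why only Stage (II)---the support result advertised in the abstract---can currently be executed rigorously.
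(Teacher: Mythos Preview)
The statement you are trying to prove is labelled as a \emph{conjecture} in the paper and is \emph{not} proved there. The paper only establishes your Stage~(II), the ellipse--support result, as its Theorem~\ref{T:main-3}; the full identification of the limit (your Stage~(III)) is explicitly deferred to the follow-up work \cite{dauvergne3}. Your overall assessment of where the difficulty lies is therefore accurate.

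However, your proposed route to Stage~(II) contains a genuine gap and differs substantially from what the paper does. You claim that restricting the sorting dynamics to a spatial window of width $2\delta n$ and rescaling should give something close in law to a uniform $2\delta n$--element sorting network. This is not known and almost certainly false as stated: the relative order process of a subset of particles in a uniform sorting network does give a reduced word for that subset, but there is no reason for its law to be uniform, and the edge--sine--curve estimate from the abstract is a statement about \emph{uniform} sorting networks. Without this self-similarity your bootstrapping from the $\pi$-Lipschitz bound to the ellipse does not go through.

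The paper obtains the ellipse by an entirely different mechanism. It first strengthens the $\pi$-Lipschitz bound to the position-dependent bound $|y'(t)|\le \pi\sqrt{1-y(t)^2}$ (Theorem~\ref{T:main}). The semicircle factor comes not from a spatial self-similarity but from the \emph{local limit}: at global height $\al$ the local limit is the universal process $U$ run at time-rate $\sqrt{1-\al^2}$, and one shows (Sections~\ref{S:existence}--\ref{S:bounded-speed}) that particle speeds in $U$ are a.s.\ bounded by $\pi$. A swap-counting argument (Theorem~\ref{T:swap-rate}, Lemma~\ref{L:bad-box}) then controls the exceptional times and upgrades this to the global $\pi\sqrt{1-y^2}$-Lipschitz property. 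Once that is in hand, the ellipse follows immediately by comparing a trajectory to the extremal ODE solutions $f'=\pm\pi\sqrt{1-f^2}$ through $(0,x)$ and $(1,-x)$, which are sines; this is exactly the computation \eqref{E:two-ineq} in Section~\ref{S:corollaries}. In short, the paper gets the ellipse from a sharper pathwise Lipschitz bound driven by the local limit, not from a renormalization to smaller sorting networks.
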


\begin{figure}
   \centering
   \includegraphics[scale=0.9]{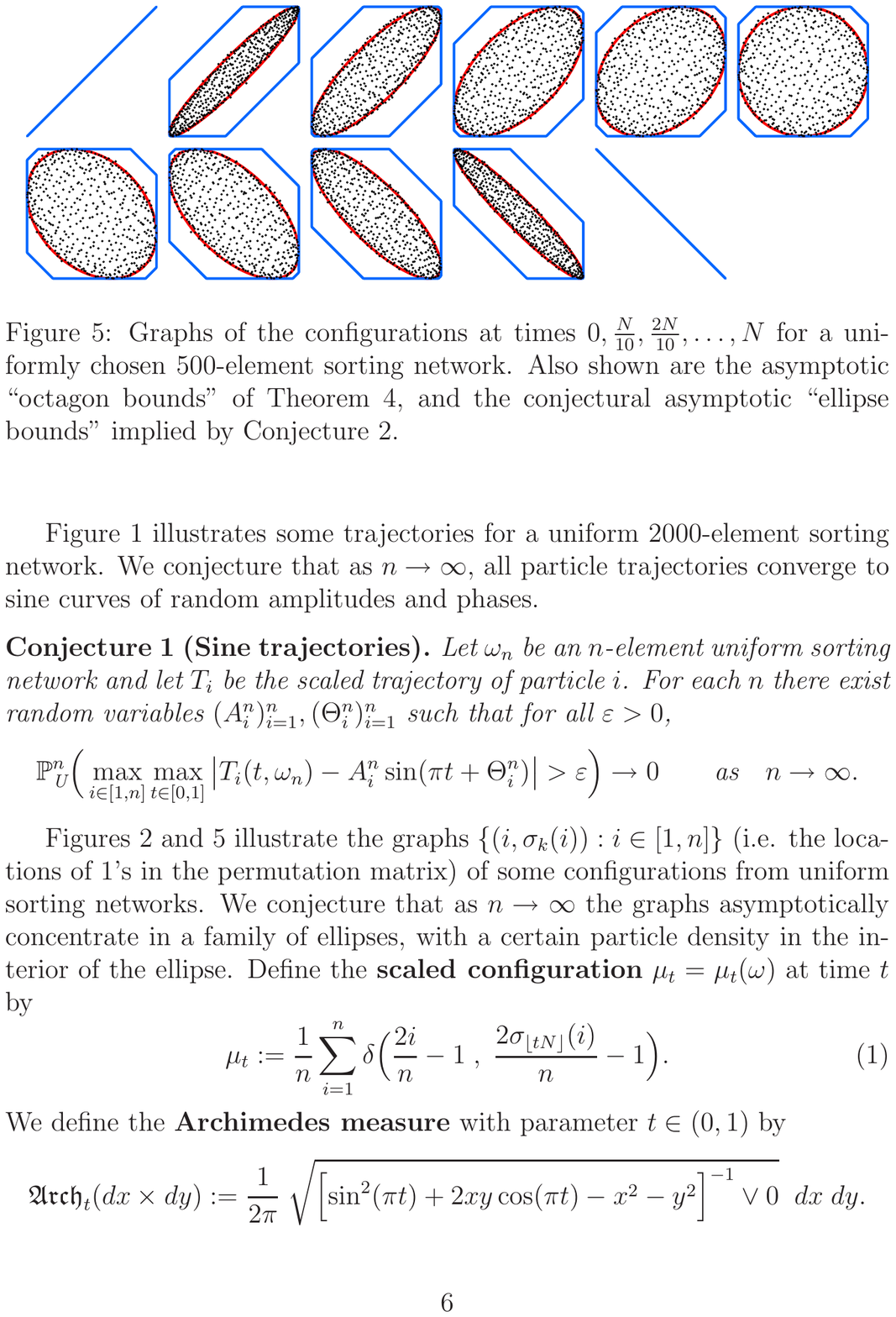}
   \caption{A diagram of the measures $\{\eta^n_t : t \in \{0, 1/10, 2/10, \ddd 1\} \}$ in a $500$-element sorting network. The octagon bounds from \cite{angel2007random} are given in blue in this picture. One of our main results in this paper is proving the ellipse bounds (red). As can be seen from the figure, simulations suggest that this bound is tight. This figure is from \cite{angel2007random}.}
   \label{fig:circles}
\end{figure}
%
%

\medskip

The main results of this paper work towards proving the above two conjectures. To state these results, let $\scrD$ be the closure of the space of all possible sorting network trajectories under the uniform norm. Let $Y_n \in \scrD$ be a unifomly chosen particle trajectory from the set of $n$-element sorting network trajectories. That is, if $\sig^n$ is a uniform $n$-element sorting network, and $I_n$ is an independent uniform random variable on $\{1, \ddd n\}$, then
 $$
 Y_n = \sig^n_G(I_n, \cdot).
 $$
The following lemma, proven in Section \ref{S:prelim}, guarantees that subsequential limits of $Y_n$ exist in distribution. This is a version of the H\"older continuity result from \cite{angel2007random}.

\begin{lemma}
\label{L:precompact}

(i) The sequence $\{Y_n : n \in \nat\}$ is uniformly tight.

(ii) Let $Y$ be a subsequential limit of $\{Y_n : n \in \nat\}$ in distribution. Then
$$
\prob \lf( Y \text{ is H\"older-$1/2$ continuous with H\"older constant $\sqrt{8}$} \mathand Y(0) = -Y(1) \rg) =1.
$$
Moreover, for each $t \in [-1, 1]$, $Y(t)$ is uniformly distributed on $[-1, 1]$.

\end{lemma}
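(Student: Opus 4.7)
The plan is to reduce both parts to a single uniform modulus-of-continuity estimate: there is a deterministic $\alpha_n \to 0$ with
\[ \prob\lf( \sup_{x \in [1,n]} \sup_{0 \le s < t \le 1} \bigl(\, |\sig^n_G(x, t) - \sig^n_G(x, s)| - \sqrt{8(t-s)} \,\bigr) > \alpha_n \rg) \to 0. \]
This is essentially the H\"older-$1/2$ bound of \cite{angel2007random}, upgraded to be uniform in $x$, $s$, $t$ via a union bound over a deterministic $O(n^3)$-sized mesh of triples; this is feasible because the underlying concentration (derived combinatorially via the RSK correspondence between uniform sorting networks and uniform staircase Young tableaux) has exponential-type tails. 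Unpacked via the rescaling $\sig^n_G = 2\sig^n/n - 1$, the estimate says that no particle moves by more than about $2\sqrt{m}$ positions in any block of $m$ consecutive steps. A deterministic input is the identity that each particle $x$ participates in exactly $n-1$ swaps globally, obtained by observing that the counts of pairs with a larger (resp.\ smaller) label on the appropriate side change monotonically between their deterministic values at times $0$ and $N$.

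Given the estimate, part (i) follows from Arzel\`a--Ascoli: $Y_n$ takes values in $[-1, 1]$ and is a step function with jumps of size $2/n$ at integer multiples of $1/N$, so the modulus bound yields equicontinuity in probability (for $|t-s| \le 1/N$ the trajectory is constant, and for $|t-s| > 1/N$ the bound is $\sqrt{8(t-s)} + \alpha_n$). For part (ii), apply Skorokhod's representation along a convergent subsequence to couple $Y_{n_k} \to Y$ uniformly almost surely; the inequality $|Y_n(t) - Y_n(s)| \le \sqrt{8(t-s)} + \alpha_n$ passes to the uniform limit, giving the claimed H\"older-$1/2$ regularity with constant $\sqrt{8}$ almost surely.

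The remaining pieces are direct. At finite $n$ one has $Y_n(0) = 2I_n/n - 1$ and, since $\sig^n_N$ is the reverse permutation, $Y_n(1) = 2(n - I_n + 1)/n - 1 = -Y_n(0) + 2/n$; the $2/n$ error vanishes in the limit, giving $Y(0) = -Y(1)$. For the marginals, for any time $t$, $\sig^n_{\lfloor Nt \rfloor}$ is a fixed (given the network) permutation of $\{1,\ldots,n\}$, so its image on the independent uniform index $I_n$ is uniform on $\{1,\ldots,n\}$; hence $Y_n(t)$ is uniform on the grid $\{-1+2/n, -1+4/n,\ldots, 1\}$, which converges weakly to $\mathrm{Unif}[-1,1]$. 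The main technical obstacle is the uniform modulus-of-continuity estimate itself; the pointwise version appears already in \cite{angel2007random}, and the work is in making it uniform over all particles and all sub-intervals of $[0,1]$, for which an exponential-tail concentration statement combined with a polynomial-size union bound is needed.
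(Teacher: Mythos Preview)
Your approach is correct and close in spirit to the paper's, with two points worth noting.

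First, the uniform modulus-of-continuity estimate you aim to establish is already recorded in the paper as Theorem~\ref{T:holder} (quoted from \cite{angel2007random}) with the quantifier ``for all $x \in [1,n]$, $s,t \in [0,1]$'' built in, so no additional union-bound or exponential-tail work is needed; your sequence $\alpha_n$ is obtained from it by diagonalising over $\epsilon$, exactly as the paper does to produce its $\epsilon_n$. The digression about RSK and the deterministic $n-1$ swap count play no role.

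Second, where you invoke Arzel\`a--Ascoli and then pass the H\"older inequality through a Skorokhod coupling, the paper instead replaces $Y_n$ by its piecewise-linear interpolation $Y_{n,m}$ on a mesh of size $m = m_\delta(n) \to \infty$: on the high-probability event $A_{\epsilon_n}$ these interpolants are genuinely H\"older-$1/2$ with constant $\sqrt{8}+\delta$ and hence lie in a fixed compact subset of $C[0,1]$, while staying uniformly $o(1)$-close to $Y_n$. This linearisation cleanly sidesteps the nuisance that $Y_n$ itself has jumps of size $2/n$ and therefore never sits in an equicontinuous family (your statement ``for $|t-s|\le 1/N$ the trajectory is constant'' is not quite right: such an interval can contain one jump, so the oscillation is at most $2/n$, not zero). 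Your direct route is fine too, but when you pass the inequality $|Y_{n_k}(t)-Y_{n_k}(s)|\le\sqrt{8(t-s)}+\alpha_{n_k}$ to the limit you should remember it holds only with probability tending to $1$, so either run Borel--Cantelli along a further subsequence or first establish the bound for a countable dense set of $(s,t)$ and extend by continuity. Both routes yield the same conclusion with comparable effort; the paper's linearisation buys a slightly cleaner bookkeeping, while your Skorokhod argument avoids introducing the auxiliary interpolants.
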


\medskip

We say that a path $y \in \scrD$ is $g(y)$-Lipschitz if $y$ is absolutely continuous and if for almost every $t$, $|y'(t)| \le g(y(t))$. We can now state the main theorem of this paper.

\begin{theorem}
\label{T:main}
Suppose that $Y$ is a distributional subsequential limit of $Y_n$. Then 
$$
\prob \lf(Y \text{ is } \pi\sqrt{1 - y^2}\text{-Lipschitz}\rg) = 1.
$$
\end{theorem}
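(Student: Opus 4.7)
My strategy is to establish a pointwise Lebesgue-almost-everywhere velocity bound $|Y'(t)| \le \pi\sqrt{1 - Y(t)^2}$ by controlling the motion of a generic particle in terms of the local density of swaps of the random sorting network near $(Y(t), t)$. Since $Y$ is continuous by Lemma \ref{L:precompact}, such a pointwise bound upgrades to the stated Lipschitz condition.

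\emph{Joint limit with the swap measure.} For each $n$, form the random empirical swap measure $\mu^n = \frac{1}{N}\sum_{i = 1}^N \delta_{(2k_i/n - 1,\, i/N)}$ on $[-1,1]\times[0,1]$. Since $\mu^n$ is a probability measure on a compact set, the sequence is automatically tight, and along the subsequence where $Y_n \Rightarrow Y$ I pass to a further subsequence so that $(Y_n, \mu^n) \Rightarrow (Y, \mu)$ jointly. In the discrete network a particle at rescaled position $y$ moves only when a swap occurs at a position adjacent to it, so combining this with the H\"older-$1/2$ localization from Lemma \ref{L:precompact} gives, for small $h > 0$,
\begin{equation*}
|Y_n(t + h) - Y_n(t)| \le \tfrac{2}{n}\,\#\lf\{i : \card{2k_i/n - 1 - Y_n(t)} \le \sqrt{8h},\; i/N \in [t, t + h]\rg\}.
\end{equation*}
Sending $n \to \infty$ and then $h \downarrow 0$ yields $|Y'(t)| \le \rho(Y(t), t)$ at Lebesgue-a.e.\ $t$, where $\rho$ is a suitable local density of the limiting measure $\mu$.

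\emph{The density bound, and main obstacle.} The heart of the proof is to show that $\rho(y, t) \le \pi\sqrt{1-y^2}$. A swap recorded by $\mu^n$ in a thin slab $[y - \ep, y + \ep] \times [t, t + h]$ corresponds to an inversion of the time-$t$ permutation matrix $\eta^n_t$ being resolved at position $\approx y$ within the next $h$ units of time. By Lemma \ref{L:precompact}, the marginals of $\eta^n_t$ are uniform on $[-1, 1]$, which caps the number of such inversions via a counting argument on pairs of particles that straddle $y$ at time $t$. The sharp constant $\pi\sqrt{1-y^2}$ reflects the conjectured Archimedean support geometry from Conjecture \ref{CJ:matrices}, and deriving it is what I expect to be the main obstacle: the H\"older bound from Lemma \ref{L:precompact} alone is an order of magnitude too weak, so the argument must exploit additional structure. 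Two natural routes are (i) a coupling with an independent second sorting network to produce an arcsine-type weight $1/\sqrt{1-y^2}$ (whose integral over $[-1,1]$ equals $\pi$), or (ii) the Edelman--Greene correspondence, mapping the uniform sorting network to a random staircase Young tableau whose limit shape pointwise encodes the swap density $\rho(y, t)$.
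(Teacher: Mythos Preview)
Your reduction to the swap measure does not go through. The displayed inequality
\[
|Y_n(t+h)-Y_n(t)|\le \tfrac{2}{n}\,\#\{\text{swaps in the spatial window of width }2\sqrt{8h}\text{ during }[t,t+h]\}
\]
is correct, but it bounds the movement of a \emph{single} particle by the total number of swaps in a window that contains on the order of $n\sqrt{h}$ other particles. In global units the right-hand side equals $\tfrac{2N}{n}\,\mu^n(\text{window})$, and since $2N/n\sim n$ while $\mu^n(\text{window})$ converges to a positive quantity of order $h^{3/2}$, the bound blows up as $n\to\infty$ for any fixed $h$. There is no order of limits that rescues it. The underlying problem is structural: the empirical swap measure $\mu^n$ records only \emph{where} swaps occur, not \emph{which} particles participate, so it cannot isolate the swap rate of the tagged particle. (For calibration: the limiting density of $\mu$ is $\tfrac{2}{\pi}\sqrt{1-y^2}$, not $\pi\sqrt{1-y^2}$; the two differ by a factor of $\pi^2/2$, which already signals that $\rho$ is not the object controlling $|Y'|$.) Your proposed routes (coupling, Edelman--Greene) at best compute this aggregate density, which is the wrong quantity.

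What the paper does instead is work with the local limit $U$, where a tagged particle's individual swap rate can be analysed. It first proves that asymptotic particle speeds in $U$ exist and lie in $[-\pi,\pi]$ (the bound comes from a longest-increasing-subsequence constraint, not from any density calculation), and that the expected swap rate of a particle in $U$ is exactly $8/\pi$. The passage from local to global is then a tightness argument: over a short global time window the total number of swaps summed over all particles is deterministic, and the contribution from particles moving at local speed $\le\pi+\ep$ is shown (via convergence to $U$ and the $8/\pi$ identity) to already account for essentially all of it. Hence the ``fast'' excess movement is negligible in $L^1$, which after Markov and a routine limiting argument gives the $\pi\sqrt{1-y^2}$ bound on $|Y'|$. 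The sharp constant $\pi$ thus arises from the local speed bound, not from the swap density.
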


As a consequence of Theorem \ref{T:main}, we show that any weak limit of the time-$t$ permutation matrices is contained in the elliptical support of $\mathfrak{Arch}_t$. We also show that trajectories near the top of sorting networks are close to sine curves. 

\begin{theorem}
\label{T:main-3}
Let $t \in [0, 1]$, and let $\eta_t$ be a subsequential limit of $\eta^n_t$. Then the support of the random measure $\eta_t$ is almost surely contained in the support of $\mathfrak{Arch}_t$.
\end{theorem}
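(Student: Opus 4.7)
The plan breaks into three steps: a deterministic lemma about Lipschitz paths, an application of Theorem~\ref{T:main} to the subsequential limit of trajectory distributions, and a transfer from individual trajectories to the random measure $\eta_t$ via Portmanteau. The key deterministic claim is: if $f \in \scrD$ is $\pi\sqrt{1-y^2}$-Lipschitz with $f(0) = -f(1) = x$, then $(x, f(t)) \in E_t := \supp(\mathfrak{Arch}_t)$ for every $t \in [0,1]$.

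To prove this, parametrize $E_t = \{(a,b) : a^2 + b^2 - 2ab\cos(\pi t) \le \sin^2(\pi t)\}$; the vertical slice at $a=x$ is the interval $[\sin(\pi(\theta_0-t)), \sin(\pi(\theta_0+t))]$ with $\theta_0 := \arcsin(x)/\pi \in [-1/2, 1/2]$. The Lipschitz hypothesis, via the change of variable $y \mapsto \arcsin y$ inside the integral $\int f'(s)/\sqrt{1-f(s)^2}\,ds$, translates exactly to: $\arcsin \circ f$ is $\pi$-Lipschitz. Applying this $\pi$-Lipschitz bound on the pair $(0, t)$ from $f(0) = x$ gives $\arcsin(f(t)) \le \arcsin(x) + \pi t$, hence $f(t) \le \sin(\pi(\theta_0 + t))$ whenever $\theta_0 + t \le 1/2$. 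Applying it on the pair $(t, 1)$ from $f(1) = -x$ gives $\arcsin(f(t)) \le -\arcsin(x) + \pi(1-t) = \pi(1 - t - \theta_0)$; since $\sin$ is increasing on $[-\pi/2, \pi/2]$ and $\sin(\pi - \alpha) = \sin \alpha$, this yields $f(t) \le \sin(\pi(1 - t - \theta_0)) = \sin(\pi(\theta_0 + t))$ whenever $\theta_0 + t \ge 1/2$. The two regimes cover $[0,1]$, so $f(t) \le \sin(\pi(\theta_0+t))$ throughout; the symmetric argument provides the matching lower bound $f(t) \ge \sin(\pi(\theta_0-t))$, proving $(x, f(t)) \in E_t$.

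For the measure statement, fix a subsequence $n_k$ along which $\eta_t^{n_k}$ converges in distribution to $\eta_t$. By Lemma~\ref{L:precompact}, pass to a further subsequence along which $Y_{n_k}$ also converges in distribution, to some $Y$. Theorem~\ref{T:main} and Lemma~\ref{L:precompact} imply that almost surely $Y$ is $\pi\sqrt{1-y^2}$-Lipschitz with $Y(0) = -Y(1)$, so the deterministic claim gives $(Y(0), Y(t)) \in E_t$ almost surely. Now fix any open $U \subset [-1,1]^2$ with $\overline U \cap E_t = \emptyset$. Since the evaluation map $y \mapsto (y(0), y(t))$ is continuous on $\scrD$, Portmanteau applied to the closed set $\overline U$ forces $\expt \eta_t^{n_k}(U) = \prob((Y_{n_k}(0), Y_{n_k}(t)) \in U) \to 0$. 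Realizing $\eta_t^{n_k} \to \eta_t$ weakly almost surely via Skorokhod, the deterministic Portmanteau inequality $\eta_t(U) \le \liminf \eta_t^{n_k}(U)$ combined with Fatou gives $\expt \eta_t(U) \le \liminf \expt \eta_t^{n_k}(U) = 0$, hence $\eta_t(U) = 0$ almost surely. Covering $E_t^c$ by countably many such open sets yields $\eta_t(E_t^c) = 0$ almost surely, i.e., $\supp(\eta_t) \subset E_t$ almost surely.

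The main obstacle is the deterministic claim. The Lipschitz bound alone is strictly weaker than the ellipse bound: for instance, a path starting at $1$, descending along the saturating sine curve to some $y_0 \in (0,1)$, and then remaining constant at $y_0$ is $\pi\sqrt{1-y^2}$-Lipschitz but violates $(1, y_0) \in E_{1/2}$. The endpoint constraint $f(0) = -f(1)$ from Lemma~\ref{L:precompact} is essential, and the cleanest way to exploit it is to work in the $\arcsin$ coordinate, where the Lipschitz bound becomes a $\pi$-Lipschitz condition and the forward and backward one-sided estimates meet exactly at $\sin(\pi(\theta_0+t))$ via the identity $\sin(\pi - \alpha) = \sin \alpha$.
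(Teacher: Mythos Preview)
Your proof is correct and follows essentially the same route as the paper's: the deterministic bound you obtain via the $\arcsin$ change of variable is exactly the paper's comparison with the IVP solutions $f' = \pm\pi\sqrt{1-f^2}$ (your $\sin(\pi(\theta_0 \pm t))$ equals $x\cos(\pi t) \pm \sqrt{1-x^2}\sin(\pi t)$ by angle addition), and your Portmanteau/Skorokhod/Fatou transfer is just an explicit unpacking of the paper's one-line observation that $\supp(\eta_t)$ is a.s.\ contained in the support of the law of $(Y(0),Y(t)) = \expt[\eta_t]$. Your emphasis on the necessity of the endpoint condition $f(0)=-f(1)$ is well placed and matches the paper's use of both the forward IVP from $f(0)=x$ and the backward IVP from $f(1)=-x$.
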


\begin{theorem}
\label{T:main-2}
Suppose that $Y$ is a subsequential limit of $Y_n$. Then for any $\ep > 0$,
\begin{align*}
\prob \lf( Y(0) \ge 1 - \ep \mathand ||Y(t) - \cos(\pi t)||_u \ge \sqrt{2\ep}\rg) &= 0, \qquad \mathand \\
\prob \lf(Y(0) \le -1 + \ep \mathand ||Y(t) + \cos(\pi t)||_u \ge \sqrt{2\ep}\rg) &= 0.
\end{align*}
Here $||\cdot||_u$ is the uniform norm.
\end{theorem}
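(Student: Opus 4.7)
The plan is to combine Theorem \ref{T:main} with the boundary condition $Y(0) = -Y(1)$ from Lemma \ref{L:precompact} to reduce the claim to a sharp deterministic estimate, and then invoke the uniform distribution of $Y(0)$ (also from Lemma \ref{L:precompact}) to handle the equality case. Working on the full-probability event where $Y$ is $\pi\sqrt{1-y^2}$-Lipschitz with $Y(0) = -Y(1)$, I would introduce the angular coordinate $u(t) := \arccos(Y(t))$; by the chain rule the Lipschitz hypothesis translates to $|u'(t)| \leq \pi$ almost everywhere, i.e., $u$ is $\pi$-Lipschitz on $[0,1]$ with values in $[0, \pi]$. Writing $\delta_0 := \arccos(Y(0))$, the endpoint conditions become $u(0) = \delta_0$ and $u(1) = \pi - \delta_0$.

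The crucial rigidity is that the required change $u(1) - u(0) = \pi - 2\delta_0$ nearly saturates the travel budget of a $\pi$-Lipschitz function on $[0,1]$. Set $r(t) := u(t) - \pi t$; then $r'(t) = u'(t) - \pi \in [-2\pi, 0]$ almost everywhere, so $r$ is non-increasing from $r(0) = \delta_0$ to $r(1) = -\delta_0$, confining $r(t)$ to $[-\delta_0, \delta_0]$. Applying the product-to-sum identity $\cos A - \cos B = -2 \sin(\tfrac{A+B}{2}) \sin(\tfrac{A-B}{2})$ now yields
\[
|Y(t) - \cos(\pi t)| = 2 \bigl|\sin\bigl(\tfrac{u(t)+\pi t}{2}\bigr) \sin\bigl(\tfrac{r(t)}{2}\bigr)\bigr| \leq 2\sin(\delta_0/2) = \sqrt{2 - 2\cos(\delta_0)} = \sqrt{2(1 - Y(0))},
\]
where the inequality uses $|\sin(r(t)/2)| \leq \sin(\delta_0/2)$ since $\sin$ is increasing on $[0,\pi/2]$ and $|r(t)/2| \leq \delta_0/2 \leq \pi/2$. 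On the event $\{Y(0) \geq 1 - \epsilon\}$ we thus have $\|Y(\cdot) - \cos(\pi \cdot)\|_u \leq \sqrt{2\epsilon}$, with both inequalities achieving equality only if $Y(0) = 1 - \epsilon$; by the uniform distribution of $Y(0)$, this is a null event.

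The second inequality follows from the same deterministic bound applied to $\tilde Y := -Y$, which is also $\pi\sqrt{1 - y^2}$-Lipschitz with $\tilde Y(0) = -\tilde Y(1)$: this gives $\|Y + \cos(\pi\cdot)\|_u = \|\tilde Y - \cos(\pi\cdot)\|_u \leq \sqrt{2(1 + Y(0))}$, and one concludes as before. Modulo Theorem \ref{T:main}, I do not expect any real obstacle---the argument is a short trigonometric computation. The one subtle point is that the estimate is sharp and matches the target bound exactly, so the ``probability zero'' conclusion genuinely depends on the non-atomicity of the distribution of $Y(0)$ provided by Lemma \ref{L:precompact}.
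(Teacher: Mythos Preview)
Your proof is correct and is essentially the paper's argument recast in the angular coordinate $u=\arccos Y$: the paper sandwiches $Y(t)$ between $Y(0)\cos\pi t\pm\sqrt{1-Y(0)^2}\sin\pi t$ via ODE comparison (inequality \eqref{E:two-ineq} from the proof of Theorem~\ref{T:main-3}) and then reads off $\|Y-\cos(\pi\cdot)\|_u\le\sqrt{2(1-Y(0))}$, which is exactly your bound $2\sin(\delta_0/2)$ once one applies $\cos$ to $r(t)\in[-\delta_0,\delta_0]$. One small caveat: the step ``by the chain rule $u$ is $\pi$-Lipschitz'' glosses over the absolute continuity of $\arccos\circ Y$, which is not automatic since $\arccos$ has unbounded derivative at $\pm 1$; the cleanest justification is precisely the ODE comparison the paper invokes, which gives $|u(t)-u(s)|\le\pi|t-s|$ directly without differentiating $u$. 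Your explicit treatment of the equality case via the uniform law of $Y(0)$ is a detail the paper's terse proof leaves implicit.
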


\subsection{Local limit theorems}

In order to prove Theorem \ref{T:main}, we analyze the interactions between the local and global structure of sorting networks. As a by-product of this analysis, we prove that in the local limit of random sorting networks, particles have bounded speeds and swap rates. To state these theorems, we first give an informal description of the local limit (a precise description is given in Section \ref{S:prelim}). The existence of this limit was established independently by Angel, Dauvergne, Holroyd, and Vir\'ag \cite{angel2017local}, and by Gorin and Rahman \cite{gorin2017}. Define the local scaling of trajectories
$$
U_n (x, t) = \sig^n(\floor{n/2} + x, nt) - \floor{n/2}.
$$
With an appropriate notion of convergence, we have that 
$$
U_n \cvgd U,
$$
where $U$ is a random function from $\Z \X [0, \infty) \to \Z$. $U$ is the local limit centred at particle $\floor{n/2}$. We can also take a local limit centred at particle $\floor{\al n}$ for any $\al \in (0, 1)$. The result is the process $U$ with time rescaled by a semicircle factor $2\sqrt{\al(1 - \al)}$. We now state our two main theorems about $U$. 
\begin{theorem}
 \label{T:main-local}
For every $x \in \Z$, the following limit
$$
S(x) = \lim_{t \to \infty} \frac{U(x, t) - U(x, 0)}{t}  \qquad \text{exists } \as.
$$
$S(x)$ is a symmetric random variable with distribution $\mu$ independent of $x$. The support of $\mu$ is contained in the interval $[-\pi, \pi]$.
Moreover, the random function $S: \Z \to \real$ is stationary and mixing of all orders with respect to the spatial shift $\tau$ given by  $\tau S(x) = S(x + 1)$.
\end{theorem}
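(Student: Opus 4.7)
The plan is to establish the four parts of the theorem in turn: almost sure existence of $S(x)$, its $x$-independent distribution $\mu$, symmetry of $\mu$, the support bound $[-\pi,\pi]$, and finally spatial mixing. Theorem \ref{T:main} will be the external input for the support bound, while the other assertions are inherited from structural properties of the local limit $U$.

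The hardest step is existence. I would view $U$ as a time-homogeneous Markov process on bijections of $\Z$, started at the identity, and introduce the ``environment seen from particle $x$'' process $V_t$, obtained by recentering $U(\cdot, t)$ so that particle $x$ is always at the origin. Using spatial stationarity of $U$ (established in \cite{angel2017local, gorin2017}) together with tightness of the displacement, I would identify a stationary distribution for $V$ and argue it is ergodic. The displacement $U(x,t) - x$ is an additive functional of $V$, so Birkhoff's ergodic theorem gives almost sure convergence of $(U(x,t) - x)/t$ to a constant $S$ when $V_0$ has the stationary law, and a coupling of $V$ started from the identity with its stationary version then transfers the convergence to the identity initial condition.

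For $x$-independence, spatial stationarity of $U$ gives $(x, t) \mapsto U(x+k, t) - k \eqd U$ for each $k$, so $S(x) \eqd S(x+k)$; in particular $\mu$ does not depend on $x$ and $S$ is stationary. Symmetry of $\mu$ comes from the reflection symmetry $\sig^n \mapsto \tilde\sig^n$ with $\tilde\sig^n(x,t) = n+1 - \sig^n(n+1-x, t)$ of uniform sorting networks, which passes to the local limit as $(U(x,t))_{x,t} \eqd (c - U(c-x, t))_{x,t}$ for a parity-dependent constant $c \in \{1,2\}$, yielding $S(x) \eqd -S(c-x) \eqd -S(x)$. For the support bound I would couple the local and global scalings: local time $t$ at particle $\lfloor n/2 \rfloor + x$ corresponds to global time $2t/(n-1)$ at $I_n = \lfloor n/2 \rfloor + x$, so
\[
\frac{U_n(x,t) - x}{t} \;=\; \frac{n-1}{n} \cdot \frac{\sig^n_G(I_n, 2t/(n-1)) - \sig^n_G(I_n, 0)}{2t/(n-1)}.
\]
Any distributional subsequential limit $Y$ of $Y_n$ is $\pi$-Lipschitz since $\pi\sqrt{1 - y^2} \le \pi$, so $|Y(s) - Y(0)| \le \pi s$. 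A diagonal extraction along $t_n \to \infty$ with $t_n = o(n)$, combined with a joint coupling of local and global limits, transfers this bound to $|S(x)| \le \pi$.

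Finally, spatial mixing of $S$ should follow from spatial mixing of $U$ itself (proved in \cite{angel2017local}), since each $S(x)$ is a tail observable of $U$ as seen from particle $x$. The main obstacle I anticipate is the existence step: verifying that the environment process $V$ admits a unique ergodic stationary distribution, and that its relaxation from the identity start is strong enough (at least in a Cesaro sense) to push the Birkhoff convergence through to the actual initial condition.
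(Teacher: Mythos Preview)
Your proposal has two genuine gaps.

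\textbf{Existence.} You plan to prove that the environment-from-particle process $V$ is ergodic and then invoke Birkhoff to get a \emph{constant} limit. But $S(x)$ is not constant: the theorem asserts it has a nontrivial distribution $\mu$ (and later, Corollary~\ref{C:av-swap-rate} shows $\expt|X-X'|=8/\pi$ for independent $X,X'\sim\mu$, so $\mu$ is not a point mass). The speed $S(x)$ is itself a time-invariant observable of $V$, so $V$ cannot be ergodic in time; your program to establish ergodicity would fail. What is actually needed is only \emph{stationarity} of the increments of $U(x,\cdot)$, which already gives a random a.s.\ limit by Birkhoff. The paper obtains this stationarity directly (Theorem~\ref{T:time-stat}): decompose $\prob([U(0,t+s)-U(0,t)]_{s\ge 0}\in A)$ according to the value $j=U(0,t)$, use spatial stationarity to shift $j$ to $0$, and then use time-increment stationarity of $U$. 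No Markov structure, no search for a stationary law, no coupling from the identity start is required. You also omit the $L^1$ input needed for Birkhoff; the paper supplies it by relating the particle swap count $Q(x,t)$ to the location swap count $W(0,t)$ via spatial stationarity (Lemma~\ref{L:finite-exp-chunks}), and the latter has known expectation $4t/\pi$.

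\textbf{Support bound.} Invoking Theorem~\ref{T:main} here is circular: in the paper, Theorem~\ref{T:main} is proved in Section~\ref{S:Lipschitz} using Corollary~\ref{C:av-swap-rate}, which in turn rests on the support bound $\supp(\mu)\subset[-\pi,\pi]$ (Theorem~\ref{T:bounded-speed}). The paper's actual argument for the support bound is independent of Theorem~\ref{T:main}. It uses the longest-increasing-subsequence estimate (Theorem~\ref{T:subsequence}): if $\mu(\pi+3\ep,\infty)>0$, then by spatial ergodicity of $U$ one finds, with probability close to $1$, a particle in every local window moving faster than $\pi+\ep$ over a long time horizon (Lemma~\ref{L:box-implies-diagonals} is the contrapositive). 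Transferring this to the prelimit $\sigma^n$ via Theorem~\ref{T:local}, these fast crossings produce disjoint increasing runs of swaps at consecutive blocks of locations, which concatenate into an increasing subsequence of length $\sim n$ inside the first $\pi/(\pi+\ep/2)$ fraction of swaps, contradicting Theorem~\ref{T:subsequence}. Your diagonal extraction along $t_n=o(n)$ from the global Lipschitz bound does not substitute for this, because the global Lipschitz bound is downstream of what you are trying to prove.

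The remaining claims (distribution independent of $x$, symmetry, spatial stationarity and mixing of $S$) are handled as you suggest, by reading them off the corresponding properties of $U$ in Theorem~\ref{T:local}; the paper does the same in Corollary~\ref{C:speed-exist}.
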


We call $\mu$ the {\bf local speed distribution}. Theorem \ref{T:main-local} is proven as Corollary \ref{C:speed-exist} and Theorem \ref{T:bounded-speed}. 
To state the second theorem, let $Q(x, t)$ be the number of swaps made by particle $x$ in the interval $[0, t]$.

\begin{theorem}
\label{T:swap-rate}
Let $x \in \Z$, and let $S(x)$ be as in Theorem \ref{T:main-local}. Then
$$
\lim_{t \to \infty} \frac{Q(x, t)}t = \int |y - S(x)|d\mu(y) \qquad \text{almost surely and in $L^1$.}
$$
\end{theorem}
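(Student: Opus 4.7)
The plan is to begin with the decomposition
$$
Q(x,t) = A_t + B_t, \quad A_t := \#\{y > x : U(y,t) < U(x,t)\}, \quad B_t := \#\{y < x : U(y,t) > U(x,t)\}.
$$
This identity is valid because in any reduced decomposition of a permutation each pair of particles is swapped at most once---a property inherited by the local limit $U$---so every swap of particle $x$ by time $t$ corresponds uniquely to a pair $\{x,y\}$ that has become inverted, counted by $A_t$ if $y>x$ and by $B_t$ if $y<x$.

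To compute the almost sure limit of $A_t/t$, I would invoke Theorem \ref{T:main-local}: for every $y$, $U(y,t)/t \to S(y)$ a.s., and $(S(y))_{y \in \Z}$ is a stationary, mixing field with one-point marginal $\mu$ supported in $[-\pi,\pi]$. For $y = x+k$ with $k \geq 1$, the inequality $U(x+k,t) < U(x,t)$ is, after dividing by $t$, asymptotically equivalent to $S(x+k) < S(x) - k/t$, so $A_t/t$ should be well-approximated by
$$
\frac{1}{t}\,\#\{k \geq 1 : S(x+k) < S(x) - k/t\}.
$$
I would evaluate this via a Riemann-sum sandwich: fix $\delta > 0$, partition the range of $u := k/t$ (confined to $[0,2\pi]$) into intervals of width $\delta$, and on each apply Birkhoff's ergodic theorem to the stationary process $(\mathbf{1}[S(x+k) < s])_{k \geq 1}$---mixing makes the conditioning on $S(x)$ negligible as $k \to \infty$. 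Sandwiching $S(x) - k/t$ between $S(x) - j\delta$ and $S(x) - (j+1)\delta$ on each subinterval and letting $\delta \to 0$ yields
$$
A_t/t \longrightarrow \int_0^\infty \mu\bigl((-\infty,\, S(x) - u)\bigr)\, du = \int (S(x) - y)_+\, d\mu(y) \qquad \text{a.s.}
$$
A symmetric argument delivers $B_t/t \to \int (y - S(x))_+\, d\mu(y)$ almost surely, and summing produces the claimed a.s.\ limit $\int |y - S(x)|\, d\mu(y)$.

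The principal technical obstacle is the passage from the exact event $\{U(x+k,t) < U(x,t)\}$ to its speed-based proxy, since Theorem \ref{T:main-local} supplies only pointwise-in-$k$ convergence $U(x+k,t)/t \to S(x+k)$, whereas $A_t$ aggregates over $k$ up to order $t$. I would address this by truncation and a sandwich: the speed bound $|S| \leq \pi$ restricts the nontrivial range to $k \leq 2\pi t + o(t)$, and inside this range the symmetric difference between the two counts has size $o(t)$ almost surely, controlled by counting the near-critical indices $k$ with $|S(x+k) - S(x) + k/t| < \eta$ (of order $O(\eta t)$ by Birkhoff and continuity of the distribution function of $\mu$, with possible atoms handled by monotone approximation), after which one sends $\eta \to 0$. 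Finally, the $L^1$ convergence follows by combining the almost sure convergence with uniform integrability of $\{Q(x,t)/t\}_{t \geq 1}$: the limit is bounded ($\leq 2\pi$), and a uniform second-moment bound on $Q(x,t)/t$ obtained from the same decomposition together with stationarity provides the required uniform integrability via Vitali's theorem.
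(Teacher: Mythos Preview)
Your decomposition $Q(x,t) = A_t + B_t$ is valid (each pair swaps at most once in $U$, inherited from the sorting-network structure) and is more direct than the paper's route. The paper first proves a crossing rate for \emph{deterministic} lines, $C(L_q,t)/t \to \int |y - q|\,d\mu(y)$ for $L_q(t) = qt + x$ (Theorem~\ref{T:line-rate}, via Kingman for existence and a Riemann-sum sandwich much like yours for identification), and only then compares $Q(x,t)$ to $C(L_q,t)$ through $|Q(x,t) - C(L_q,t)| \le |U(x,t) - L_q(t)| + 1$, finally sending $q \to S(x)$ along the rationals. The detour through lines buys a deterministic reference slope, so no conditioning on $S(x)$ enters the ergodic averaging; your route saves a step but must carry the random threshold $S(x)$ through Birkhoff.

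There are two genuine gaps in your sketch. First, bounding the symmetric difference between $A_t$ and its speed proxy by the count of near-critical indices $\{k : |S(x+k) - S(x) + k/t| < \eta\}$ misses those indices at which particle $x+k$ has simply not converged to within $\eta$ of its speed by time $t$; such $k$ land in the symmetric difference regardless of the value of $S(x+k)$. The paper handles precisely this by introducing the good event $A(y,\epsilon,s) = \{|(U(y,t')-y)/t' - S(y)| < \epsilon$ for all $t' > s\}$, restricting the crossing count to particles in $A(\cdot,\epsilon,s)$, and showing via Birkhoff that the restriction costs at most $O(t)\cdot\prob(A(0,\epsilon,s)^c)$, which vanishes as $s \to \infty$ (Lemma~\ref{L:C-ep-C-1}); a separate term ($B(L,t)$ there, controlled through Lemma~\ref{L:box-implies-diagonals}) is needed to rule out contributions from $k$ well beyond $2\pi t$, since $|S|\le\pi$ says nothing about $U(x+k,t)$ at finite $t$. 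Second, your $L^1$ convergence via a second-moment bound on $Q(x,t)/t$ is asserted but not established, and no such bound appears anywhere. The paper instead gets a.s.\ and $L^1$ convergence simultaneously from Birkhoff, using that the swap-time process of particle $x$ is stationary (Theorem~\ref{T:time-stat}) with $Q(x,1)\in L^1$ (Lemma~\ref{L:finite-exp-chunks}), so that only the identification of the limit remains.
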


Note that the speed distribution $\mu$ is not supported on a single point, so the process $U$ is not ergodic in time. In fact, Corollary \ref{C:av-swap-rate} shows that if $X$ and $X'$ are two independent samples from $\mu$, then $\expt |X - X' | = 8/\pi$.

\subsection*{Further Work}
In a subsequent paper \cite{dauvergne3}, the first author uses the results of this paper as a starting point for proving all the sorting network conjectures from \cite{angel2007random}. In particular, this proves Conjectures \ref{CJ:sine-curves} and \ref{CJ:matrices}.

\subsection*{Related Work}
Different aspects of random sorting networks have been studied by Angel and Holroyd \cite{angel2010random}; Angel, Gorin and Holroyd \cite{angel2012pattern}; Reiner \cite{reiner2005note}; Tenner \cite{tenner2014expected}; and Fulman and Stein \cite{fulman2014stein}. In much of the previous work on sorting networks, the main tool is a bijection of Edelman and Greene \cite{edelman1987balanced} between Young tableaux of shape $(n-1, n-2, \ddd 1)$ and sorting networks of size $n$. Little \cite{little2003combinatorial} found another bijection between these two sets, and Hamaker and Young \cite{HY} proved that these bijections coincide.

\medskip

Interestingly in our work and in the subsequent work \cite{dauvergne3}, we are able to work purely with previous known results about sorting networks and avoid direct use of the combinatorics of Young tableaux. As mentioned above, our starting point is the local limit of random sorting networks \cite{angel2017local, gorin2017}, though interestingly we only use a few probabilistic facts about this limit and never use any of the determinantal structure proved in \cite{gorin2017}. Other than basic sorting network symmetries, the only other previously known results that enter into our proofs and those in \cite{dauvergne3} are a bound on the longest increasing subsequence in a random sorting network from \cite{angel2007random} and consequences of this bound (H\"older continuity and the permutation matrix 'octagon' bound). 

\medskip

 Problems involving limits of sorting networks under a different measure and with different restrictions on the length of the path in $\Ga(S_n)$ have been considered by Angel, Holroyd, and Romik \cite{angel2009oriented}; Kotowski and Vir\'ag \cite{kotowski2016limits}; Rahman, Vir\'ag, and Vizer \cite{rahman2016geometry}; and Young \cite{young2014markov}. 
 
 \medskip
 
 In particular, in \cite{kotowski2016limits} (see also \cite{rahman2016geometry}), the authors prove that trajectories in reduced decompositions of $\rev_n$ of length $n^{2 + \ep}$ for some $\ep \in (0, 1)$ converge to sine curves, proving the `relaxed' analogues of Conjectures \ref{CJ:sine-curves} and \ref{CJ:matrices}. They do this by using large deviations techniques from the field of interacting particle systems. However, it appears to be very difficult to say anything about random sorting networks using this approach. Instead, both this paper and the subsequent work \cite{dauvergne3} take an entirely different approach based around patching together local swap rate information to deduce global structure.

\subsection*{Overview of the proofs and structure of the paper}

The guiding principle behind our proofs is that we can gain insight into both the local and global structure of random sorting networks by thinking of a large-$n$ sorting network as consisting of many local limit-like blocks. By doing this, we can show that if the local limit behaves too badly, then this contradicts a global bound, and similarly if the local limit behaves well, then this forces global structure.

\medskip

We first show that particle speeds exist and are bounded in the local limit. The existence of particle speeds follows from stationarity properties of the local limit, and is proven in Section \ref{S:existence}.
To show that speeds are bounded, we connect the local and global structure of sorting networks. If the local speed distribution is not supported in $[-\pi, \pi]$, then spatial ergodicity of the local limit guarantees that there are particles travelling with local speed greater than $\pi$ in most places in a typical large-$n$ sorting network $\sig$. By patching together the movements of these fast particles, we can create a long increasing subsequence in the swap sequence for $\sig$. This contradicts a theorem from \cite{angel2007random} and finishes the proof of Theorem \ref{T:main-local}. This is done in Section \ref{S:bounded-speed}.

\medskip

In Section \ref{S:local-swap-rates} and \ref{S:Lipschitz}, we complete the proof of Theorem \ref{T:main} by showing that control over the local speed of particles gives us control over their global speeds. By the bound on local speeds, most particles in a typical large-$n$ sorting network move with local speed in $[-\pi, \pi]$ most of the time. To control what happens when particles don't move with speeds in this range, we first prove a lower bound on the number of swaps that occur when particles do move with speed in $[-\pi, \pi]$ (essentially Theorem \ref{T:swap-rate}). This shows that not too many swaps, and hence not too much particle movement, can occur when particle speeds are not in this range. 

%

\medskip

Theorem \ref{T:main-2} and Theorem \ref{T:main-3} follow easily from Theorem \ref{T:main} and are proven in Section \ref{S:corollaries}. In particular, the fact that edge trajectories are close to sine curves is due to the fact that for a particle starting near the edge to reach its destination along a $\pi\sqrt{1-y^2}$-Lipschitz trajectory, it must move with speed close to $\pi$ most of the time.

\section{Preliminaries}
\label{S:prelim}
In this section we collect necessary facts about sorting networks, and recall a precise definition of the local limit. We also prove Lemma \ref{L:precompact}.

\medskip

A basic fact about sorting networks is that they exhibit time-stationarity. Specifically, we have the following theorem, first observed in \cite{angel2007random}.

\begin{theorem}
\label{T:basic-time-stat}
Let $(K_1, \ddd K_N)$ be the swap sequence of an $n$-element uniform random  sorting network $\sig^n$. We have that
$$
(K_1, \ddd K_N) \eqd (K_2, \ddd K_N, n + 1 - K_1).
$$
\end{theorem}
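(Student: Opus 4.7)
The plan is to realize the stated identity as the pushforward of an explicit bijection $\Phi$ on the (finite) set of $n$-element sorting networks. Since $\sig^n$ is uniform on that set, any bijection preserves its distribution, so it would suffice to construct $\Phi$ so that, on swap sequences, it acts as the cyclic shift $(K_1,K_2,\ldots,K_N)\mapsto(K_2,\ldots,K_N,\,n+1-K_1)$.

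The algebraic core is a single conjugation rule between $\rev_n$ and the adjacent transpositions: because $\rev_n(j)=n+1-j$ reverses the order of positions, conjugation by $\rev_n$ sends the transposition of positions $\{i,i+1\}$ to the transposition of positions $\{n-i,n+1-i\}$. Starting from a reduced decomposition $\pi_{K_N}\cdots\pi_{K_1}=\rev_n$, I would first isolate the initial swap to obtain
\begin{align*}
\pi_{K_N}\cdots\pi_{K_2}\;=\;\rev_n\,\pi_{K_1},
\end{align*}
and then left-multiply by the $\rev_n$-conjugate of $\pi_{K_1}$. The conjugation identity collapses the right-hand side back to $\rev_n$, establishing that the cyclically shifted sequence with its transformed final entry is again a reduced decomposition of $\rev_n$ of minimal length $N$, hence a sorting network. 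This defines $\Phi$ on swap sequences.

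To upgrade $\Phi$ to a bijection, I would write down the explicit two-sided inverse by performing the symmetric operation: cycle the last swap to the front after conjugating it by $\rev_n$. The same algebraic identity applied in reverse shows that this candidate inverse also sends sorting networks to sorting networks, and a direct substitution confirms $\Phi\circ\Phi^{-1}=\id$. Alternatively, since $\Phi$ is a self-map on a finite set, injectivity suffices, and that is immediate from the fact that the shift is manifestly invertible once the conjugation is undone. Once $\Phi$ is a bijection, pushing forward the uniform law on sorting networks gives precisely the claimed distributional equality.

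There is no genuine obstacle here: the whole argument is one algebraic identity in the symmetric group, verified by a short computation. The only point requiring care is bookkeeping of indices so that the transformed final entry comes out exactly as $n+1-K_1$ rather than a close variant, which would signal a mismatch between my conventions for $\pi_i$ and $\rev_n$ and those in the paper; I would fix this by tracing through a small example (say $n=3$ or $n=4$) before declaring the algebra complete.
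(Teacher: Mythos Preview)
Your proposal is correct and is exactly the paper's approach: the paper simply asserts that the map $(k_1,\ldots,k_N)\mapsto(k_2,\ldots,k_N,\,n+1-k_1)$ is a bijection on $n$-element sorting-network swap sequences, and you supply the algebraic verification via the conjugation identity $\rev_n\,\pi_i\,\rev_n=\pi_{\,n-i}$.

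Your caution about the index is well placed. With the paper's conventions $\pi_i=(i,i+1)$ and $\rev_n(j)=n+1-j$, the conjugation yields $\pi_{n-K_1}$, not $\pi_{n+1-K_1}$; and indeed for $n=3$ the map as printed would send $(1,2,1)$ to $(2,1,3)$, which is not even a valid swap sequence. So the statement contains a typo and should read $n-K_1$. Your plan to check a small example would have caught this immediately.
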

\noindent This theorem follows from the observation that the map
$$
\{k_1, \ddd k_N\} \mapsto  \{k_2, \ddd k_N, n + 1 - k_1\}
$$
is a bijection in the space of $n$-element sorting network swap sequences. The second theorem that we need bounds the length of the longest increasing subsequence in an initial segment of the swap sequence for a random sorting network. This result is proven in \cite{angel2007random} as Corollary 15 and Lemma 18 (though it is not written down formally as a theorem itself).

\begin{theorem}
\label{T:subsequence}
Let $L_n(t)$ be the length of the longest increasing subsequence of $(K_1, K_2, \ddd K_\ceil{Nt})$. 
Then for any $\ep > 0$, we have that
$$
\prob \lf( \max_{t \in [0, 1]} \big|L_n(t) - n\sqrt{t(2 - t)}\big| > \ep n\rg) \to 0 \qquad \mathas n \to \infty.
$$

\end{theorem}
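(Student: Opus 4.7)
The plan is to prove matching upper and lower bounds, $L_n(t) \le n\sqrt{t(2-t)} + o(n)$ and $L_n(t) \ge n\sqrt{t(2-t)} - o(n)$, uniformly in $t \in [0,1]$ and with probability tending to one. Both directions must genuinely use the randomness of $\sig^n$: for deterministic sorting networks such as bubble sort, whose swap sequence begins with $1, 2, \ddd, n-1$, the LIS of even a very short initial segment already far exceeds $n\sqrt{t(2-t)}$, so no purely combinatorial argument on an arbitrary reduced decomposition can suffice.

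For the upper bound, I would interpret an increasing subsequence $K_{i_1} < K_{i_2} < \ddd < K_{i_L}$ of $(K_1, \ddd, K_{\ceil{Nt}})$ as a monotone staircase in the wiring diagram, with successive swap events at strictly larger positions and strictly larger times. For a random sorting network, one expects that with high probability the density of swap events near any rescaled position $x \in [-1,1]$ up to rescaled time $s$ concentrates around a specific deterministic profile, and the maximum length of such a staircase is then controlled by a variational integral of this density along the chosen path; the semi-circular shape of the bound should fall out of this density being itself semi-circular in a suitable sense. For the lower bound, I would use a greedy construction of an increasing subsequence, at each stage picking the earliest available swap at a strictly larger position than the current one. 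Combined with time stationarity (Theorem \ref{T:basic-time-stat}) and concentration of the empirical swap density, this greedy subsequence should reach length $n\sqrt{t(2-t)} - o(n)$ with high probability.

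The main obstacle is pinning down the precise semi-circular density of swap events and establishing the variational principle that links it to the LIS. One natural route is via the Edelman-Greene bijection, under which a uniform $n$-element sorting network corresponds to a uniform standard Young tableau of staircase shape $(n-1, n-2, \ddd, 1)$; the asymptotic limit shape of such a tableau, accessible via Cohn-Larsen-Propp-type variational principles for random tableaux, would dictate the semi-circular swap density and identify the LIS with a geometric quantity on the tableau, such as a row or antidiagonal length. An alternative approach, more in the spirit of the current paper, would avoid Young tableaux entirely and instead patch local swap-rate data coming from the local limit of sorting networks into a global semi-circular profile, then combine this with stationarity and concentration arguments to control both directions of the bound simultaneously.
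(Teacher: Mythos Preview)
The paper does not itself prove this statement; it quotes it from \cite{angel2007random} (Corollary~15 and Lemma~18 there). The argument in that reference is precisely your ``one natural route'': under the Edelman--Greene bijection a uniform sorting network corresponds to a uniform standard Young tableau of staircase shape, the quantity $L_n(t)$ is identified with an explicit row-length statistic of the sub-tableau of entries at most $\ceil{Nt}$, and the value $n\sqrt{t(2-t)}$ together with the uniform-in-$t$ concentration come from the known limit shape of large random staircase tableaux.

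Your primary sketch --- upper bound via a variational integral of a ``swap density'' along monotone staircases, lower bound via a greedy construction --- is not how the result is established, and as written it has real gaps. For the upper bound, an increasing subsequence of swap positions is not the trajectory of any single particle, so the relevant object is the full space-time intensity of swap events; extracting the sharp constant $\sqrt{t(2-t)}$ (rather than merely an $O(n)$ bound) from a last-passage-type variational problem over that field would itself appear to require the tableau limit shape as input. The greedy lower bound has the same difficulty: to show it produces $n\sqrt{t(2-t)} - o(n)$ records one needs tight control on the waiting time for the next swap at a strictly larger position, and that control again comes back to the same limit-shape information. Finally, your second alternative --- patching local swap-rate data from the local limit --- would be circular in the present paper, since Theorem~\ref{T:subsequence} is used here as an \emph{input} to the local analysis (it is the contradiction in the proof of Lemma~\ref{L:box-implies-diagonals}).
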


We also need the result regarding H\"older continuity of trajectories from \cite{angel2007random}.

\begin{theorem}
\label{T:holder}
For any $\ep > 0$, the global particles trajectories of $\sig^n$ satisfy
$$
\lim_{n \to \infty} \prob \lf( |\sig^n_G(x, t) - \sig^n_G(x, s)| \le \sqrt8|t - s|^{1/2} + \ep \mathforall x \in [1, n], s, t \in [0, 1] \rg) = 1.
$$
\end{theorem}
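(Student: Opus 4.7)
The plan is to bound the displacement of any particle over any time interval by the length of a monotone subsequence of the restricted swap sequence, and then invoke Theorem \ref{T:subsequence} together with the time-stationarity of Theorem \ref{T:basic-time-stat}. First, I would prove the following deterministic lemma: if particle $x$ is at position $p$ at time $Ns$ and position $q > p$ at time $Nt$, then the subword $(K_{\ceil{Ns}+1}, \ddd, K_{\ceil{Nt}})$ contains an increasing subsequence of length $q - p$. The construction is to let $\tau_j$, for $j \in \{p, \ddd, q-1\}$, be the \emph{last} step in $(\ceil{Ns}, \ceil{Nt}]$ at which $x$ swaps from position $j$ to $j+1$; this $\tau_j$ exists because $x$ must cross each level from $p$ to $q$, we have $K_{\tau_j} = j$ (so the selected values are strictly increasing in $j$), and the times $\tau_j$ themselves are strictly increasing because after $\tau_j$ the particle $x$ stays at positions $\ge j+1$, otherwise it would need to make another $j \to j+1$ jump to eventually reach $q$. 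Symmetrically, a leftward displacement of size $d$ produces a decreasing subsequence of length $d$.

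Next, iterating Theorem \ref{T:basic-time-stat} shows that $(K_{\ceil{Ns}+1}, \ddd, K_{\ceil{Nt}})$ has the same distribution as the initial segment $(K_1, \ddd, K_{\ceil{Nt} - \ceil{Ns}})$ of a fresh uniform sorting network. The position-reversal symmetry $k \mapsto n - k$ preserves uniformity on sorting networks while swapping increasing and decreasing subsequences of the swap sequence, so the decreasing case reduces to the increasing case. Theorem \ref{T:subsequence}, applied with error parameter $\ep/4$, then gives that for any fixed $s, t$, with probability $1 - o(1)$ the longest monotone subsequence of $(K_{\ceil{Ns}+1}, \ddd, K_{\ceil{Nt}})$ is at most $n\sqrt{(t-s)(2-(t-s))} + \ep n /4 \le n\sqrt{2(t-s)} + \ep n /4$. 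Combined with the displacement lemma and the global rescaling $\sig^n_G = 2\sig^n/n - 1$, this gives $|\sig^n_G(x, t) - \sig^n_G(x, s)| \le \sqrt{8(t-s)} + \ep/2$ simultaneously for all particles $x$.

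For uniformity in $s, t$ as well, I would apply the above estimate to all pairs $(s_i, s_j)$ on a $\de$-grid of $[0, 1]$ by a union bound; the number of pairs is polynomial in $1/\de$, and each contributes $o(1)$ failure probability. Off-grid times $s \in (s_i, s_{i+1}]$, $t \in (s_j, s_{j+1}]$ are handled by bounding the displacement within one grid cell by $\ep n / 4$ (via the same argument applied to the length-$\de$ initial segments), which rescales to at most $\ep/2$ and absorbs into the final error. The main conceptual hurdle is the combinatorial displacement-to-subsequence lemma in the first paragraph, and especially the strict monotonicity of the $\tau_j$; this is what forces the choice of \emph{last} passage times rather than first passages. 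Once this is in place, the rest is a clean application of the two ingredient theorems already stated.
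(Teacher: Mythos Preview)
The paper does not prove Theorem~\ref{T:holder}; it simply quotes it from \cite{angel2007random} as a known input. Your proposal is therefore not competing with any argument in the present paper, but rather reconstructing the original proof from \cite{angel2007random}, and it does so correctly. The displacement-to-monotone-subsequence lemma (via last passage times), the use of Theorem~\ref{T:basic-time-stat} to reduce an arbitrary window to an initial segment, the reflection $k \mapsto n-k$ to handle the decreasing case, and the application of Theorem~\ref{T:subsequence} together with a grid argument are exactly the ingredients of the proof in \cite{angel2007random}.

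Two small points of bookkeeping that you should tighten in a final write-up. First, Theorem~\ref{T:subsequence} already gives uniformity over the endpoint $t$, so your union bound only needs to run over the $O(1/\de)$ grid values of the \emph{start} time $s_i$, not over all pairs; this is cosmetic but simplifies the exposition. Second, in the off-grid step you are invoking Theorem~\ref{T:subsequence} a second time with a different (smaller) error parameter, say $\ep/8$, and then choosing $\de$ small enough that $\sqrt{2\de} \le \ep/8$ as well; your sentence ``bounding the displacement within one grid cell by $\ep n/4$'' is correct but conflates these two choices, so make the two separate applications of Theorem~\ref{T:subsequence} explicit.
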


Theorem \ref{T:holder} can be used to immediately prove Lemma \ref{L:precompact}. Recall that $Y_n$ is the trajectory random variable on $n$-element sorting networks.

\begin{proof}[Proof of Lemma \ref{L:precompact}.]

Let
$$
A_\ep = \lf\{ f \in \scrD : |f(t) - f(s)| \le \sqrt{8}|t -s|^{1/2} + \ep \mathforall s, t \in [0, 1]\rg\}.
$$
By Theorem \ref{T:holder}, we can find a sequence $\ep_n \to 0$ such that
\begin{equation}
\label{E:A-ep-n}
\lim_{n \to \infty} \prob \lf(Y_n \in A_{\ep_n} \rg) = 1.
\end{equation}
For a function $f \in \scrD$, define the $m$th linearization $f_m$ of $f$ by letting $f_m(i/m) = f(i/m)$ for all $i \in \{0, \ddd m \}$, and by setting $f_m$ to be linear at times in between. 

\medskip

Now fix $\de > 0$. There exists a sequence $m_\de(n) \to \infty$ as $n \to \infty$ such that for large enough $n$, if $f \in A_{\ep_n}$, then $f_{m_\de(n)}$ is H\"older-$1/2$ continuous with H\"older constant $\sqrt{8} + \de$. Moreover, 
there exists a sequence $c_n \to 0$ such that if $f \in A_{\ep_n}$, then the uniform norm 
\begin{equation}
\label{E:error-lin}
||f - f_{m_\de(n)}||_u \le c_n.
\end{equation}
For each $n$, define the random variable $Y_{n, m}$ to be the $m$th linearization of $Y_n$. By \eqref{E:A-ep-n} and \eqref{E:error-lin}, a subsequence $Y_{n_i} \to Y$ in distribution if and only if $Y_{n_i, m_\de(n_i)} \to Y$ in distribution. Moreover, \eqref{E:A-ep-n}  implies that the probability that $Y_{n_i, m_\de(n_i)}$ is H\"older-$1/2$ continuous with H\"older constant $\sqrt{8} + \de$ approaches $1$ as $n \to \infty$.

 Therefore both $Y_{n, m_\de(n)}$ and $Y_n$ are uniformly tight, and any subsequential limit $Y$ of $Y_n$ must be supported on the set of H\"older-$1/2$ continuous functions with H\"older constant $\sqrt{8} + \de$.

\medskip

This holds for all $\de > 0$, giving the H\"older continuity in the statement of the lemma. The rest of part (ii) of Lemma \ref{L:precompact} follows directly from the definition of $Y_n$.
\end{proof}

\begin{remark}
For a sorting network $\sig$, let $\nu_\sig$ be uniform measure on the trajectories $\{\sig_G(i, \cdot)\}_{i \in \{1, \dots, n\}}$. Letting $\Om_n$ be the space of all $n$-element sorting networks, consider the random measure
$$
\nu_n = \frac{1}{\card{\Om_n}} \sum_{\sig \in \Om_n} \nu_\sig.
$$
Let $\scrM(\scrD)$ be the space of probability measures on $\scrD$ with the topology of weak convergence, and let $\scrM(\scrM(\scrD))$ be the space of probability measures on $\scrM(\scrD)$ with the topology of weak convergence.

Essentially the same proof as that of Lemma \ref{L:precompact} can be used to show that the sequence $\{\nu_n\}_{n \in \nat}$ is precompact in $\scrM(\scrM(\scrD))$. This is stronger than the statement that the sequence $\{Y_n\}_{n \in \nat}$ is precompact, since the law of $Y_n$ can be thought of as the expectation of $\nu_n$.

Theorems \ref{T:main}, \ref{T:main-3}, and \ref{T:main-2} can also all be stated for subsequential limits of $\nu_n$.
\end{remark}

\subsection{The local limit}
Define a {\bf swap function} as a map $U:\inte \times [0, \infty) \to \Z$ satisfying the following properties:

\smallskip

 \begin{enumerate}[nosep,label=(\roman*)]
  \item For each $x$, $U(x,\cdot)$ is cadlag with nearest neighbour jumps.
  \item For each $t$,  $U(\cdot,t)$ is a bijection from $\Z$  to $\Z$.
  \item Define $U^{-1}(x, t)$ by $U(U^{-1}(x, t),t) = x$.  Then for each $x$, $U^{-1}(x, \cdot)$
    is a cadlag path with nearest neighbour jumps.
  \item For any time $t \in (0, \infty)$ and any $x \in \Z$, 
  $$
  \lim_{s \to t^-} U^{-1}(x, s) = U^{-1}(x + 1, t) \qquad \text{if and only if} \qquad  \lim_{s \to t^-} U^{-1}(x +1, s) = U^{-1}(x, t).
  $$
  \end{enumerate}
  We think of a swap function as a collection of particle trajectories $\{U(x, \cdot) : x \in \Z\}$.
Condition (iv) guarantees that the only way that a particle at position $x$ can move up at time $t$ is if the particle at position $x+1$ moves down. That is, particles move by swapping with their neighbours.   

\medskip
  
  Let $\scrA$ be the space of swap functions endowed with the following topology. A sequence of swap functions $U_n \to U$ if each of the cadlag paths $U_n(x, \cdot) \to U(x, \cdot)$ and $U^{-1}_n(x, \cdot) \to U^{-1}(x, \cdot)$. Convergence of cadlag paths is convergence in the Skorokhod topology. We refer to a random swap function as a \textbf{swap process}.
  
  \medskip
  
For a swap function $U$ and a time $t \in (0, \infty)$, define
$$
U(\cdot, t, s) = U(U^{-1}(\cdot, t), t + s).
$$
The function $U(\cdot, t, s)$ is the increment of $U$ from time $t$ to time $t + s$. 

\medskip

Now let $\al \in (-1, 1)$, and let $\{k_n \}_{n \in \nat}$ be any sequence of integers such that $k_n/n \to (1 + \al)/2$. Consider the shifted, time-scaled swap process 
$$
U_{n}^{k_n}(x, s) = \sig^n \lf(k_n + x, \frac{ns}{\sqrt{1- \al^2}} \rg) - k_n.
$$
To ensure that $U_{n}^{k_n}$ fits the definition of a swap process, we can extend it to a random function from $\Z \X [0, \infty) \to \Z$ by letting $U_{n}^{k_n}$  be constant after time $\frac{n-1}{2\sqrt{1 - \al^2}}$, and with the convention that  $U_{n}^{k_n}(x, s)= x$ whenever $x \notin \{1 -k_n, \ddd n - k_n\}$. In the swap processes $U_{n}^{k_n}$, all particles are labelled by their initial positions.
The following is shown in \cite{angel2017local}, and also essentially in \cite{gorin2017}. 

\begin{theorem}
\label{T:local}
There exists a swap process $U$ such that for any $\al, k_n$ satisfying the above conditions,
  $$
U_{n}^{k_n}  \cvgd U \qquad \mathas \; n \to \infty.
  $$
The swap process $U$ has the following properties:
\begin{enumerate}[nosep,label=(\roman*)]
 \item $U$ is stationary and mixing of all orders with respect to the spatial shift $\tau U(x, t) = U(x + 1, t) - 1$.
  \item $U$ has stationary increments in time: for any $t \ge 0$, the
  process $U(\cdot, t, s)_{s\ge 0}$ has the same law
  as $U(\cdot,s)_{s\geq 0}$.
  \item $U$ is symmetric: $U(\cdot, \cdot) \eqd - \; U(- \; \cdot, \cdot)$.
  \item For any $t \in [0, \infty)$, $\prob($There exists $x \in \Z$ such that $U(x, t) \neq \lim_{s \to t^-} U(x, t)) = 0$.
  \item $U(y, 0) = y)$ for all $y \in \Z$.
  \end{enumerate}
  \smallskip
  
  Moreover, for any sequence of times $\{t_n : n \in \nat\}$ such that $(n-1)/2 - t_n \to \infty$ as $n \to \infty$,
  $$
U^{k_n}_n (\cdot, t_n, \cdot) \cvgd U \qquad \mathas n \to \infty.
$$ 
\end{theorem}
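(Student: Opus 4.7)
The plan is to prove the theorem in three stages: first establish tightness of $\{U_n^{k_n}\}$ in the topology on $\scrA$; next identify any subsequential limit by verifying its invariances; and finally establish spatial mixing, which pins down the limit law and forces full-sequence convergence. The time-shifted statement at the end will then drop out of the stationary-increments property (ii).

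For tightness, note that every cadlag path $U_n^{k_n}(x,\cdot)$ and its inverse has only nearest-neighbour jumps, so Skorokhod tightness in each coordinate reduces to a uniform bound on the expected number of swaps experienced by a fixed local particle on a fixed local-time interval $[0,T]$. A moment computation using Theorem \ref{T:basic-time-stat} (which lets us average the swap count per site over the time axis) combined with the semicircular density of swap positions in a random sorting network shows that the local-time scaling $ns/\sqrt{1-\al^2}$ is precisely the one that makes this expectation $O(T)$, uniformly in $n$ and for $k_n/n \to (1+\al)/2$. Joint tightness of any finite coordinate projection in $\scrA$ then follows from the product structure of the topology.

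To identify the limit, I extract a subsequential limit $U$ and verify the five listed properties on the prelimit, passing them through convergence in distribution. Property (v), $U(y,0)=y$, is immediate. Property (ii) comes from Theorem \ref{T:basic-time-stat}: a cyclic rotation of the swap sequence realises a prelimit time shift of $1/N$, so every real time shift is realisable in the local scaling and stationarity passes to the limit. Property (i) is obtained by comparing $U_n^{k_n}$ and $U_n^{k_n+1}$, both of which fall under the hypotheses and therefore share subsequential limits; this gives spatial stationarity, and mixing of all orders will follow from spatial mixing once established below. Property (iii) follows from the bijective involution $k_i \mapsto n+1-k_i$ on swap sequences, which reflects space in the prelimit. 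Property (iv) is a consequence of (ii): by time-stationarity, for each $x$ the jump times of $U(x,\cdot)$ form a stationary point process on $[0,\infty)$, so any fixed $t$ is almost surely not an atom, and a union bound over countably many $x$ finishes the argument.

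The main obstacle is spatial mixing. The cleanest route is to lean on the determinantal formulas for the uniform random sorting network from \cite{gorin2017}: one fixes two spatial windows centred at $k_n$ and $k_n + r_n$ with $r_n \to \infty$ but $r_n/n \to 0$, and shows via asymptotic analysis of the relevant correlation kernels that the joint law factorises in the limit, yielding mixing of two-point functions and, by extension, mixing of all orders. Alternatively, the explicit cylinder coupling of \cite{angel2017local} constructs the local limit by a Markovian recipe in the spatial variable, from which mixing is automatic. Either way, tightness together with the five invariances and mixing characterises the law of $U$ uniquely, so the full sequence converges. Finally, for any $t_n$ with $(n-1)/2 - t_n \to \infty$, the shifted process $U_n^{k_n}(\cdot,t_n,\cdot)$ is tight by the same argument and every subsequential limit is itself a swap process satisfying the same five invariances; uniqueness of the local limit law then forces $U_n^{k_n}(\cdot,t_n,\cdot) \cvgd U$.
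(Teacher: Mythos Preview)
The paper does not prove Theorem~\ref{T:local}: it is quoted verbatim from \cite{angel2017local} (and essentially \cite{gorin2017}) and used as a black box throughout. So there is no ``paper's own proof'' to compare against; your proposal is really a sketch of what those references do.

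As a sketch it is broadly reasonable, but there is a genuine logical gap in your uniqueness step. You argue: establish tightness, show that every subsequential limit satisfies properties (i)--(v) and is spatially mixing, and then conclude that ``tightness together with the five invariances and mixing characterises the law of $U$ uniquely, so the full sequence converges.'' This last inference is unjustified. There is no abstract uniqueness theorem saying that a swap process which is spatially stationary and mixing, has stationary time increments, is reflection-symmetric, has no fixed jump times, and starts from the identity must have a single prescribed law. Many different laws could in principle satisfy all of these constraints simultaneously. The actual proofs in \cite{angel2017local} and \cite{gorin2017} do not proceed by characterising the limit via invariances; they establish convergence of finite-dimensional distributions \emph{directly} (via determinantal kernel asymptotics in \cite{gorin2017}, or via an explicit coupling to a stationary Markovian object in \cite{angel2017local}), and the invariances (i)--(v) are then read off as consequences rather than used as identifying data.

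A secondary issue is the final paragraph on the time-shifted statement. You again invoke ``uniqueness of the local limit law'' to conclude $U_n^{k_n}(\cdot,t_n,\cdot)\cvgd U$, which inherits the same gap. In the references this is handled by combining the time-stationarity of Theorem~\ref{T:basic-time-stat} at the prelimit level with the already-established convergence of $U_n^{k_n}$ itself, not by an abstract characterisation argument.
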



We will need one more result from \cite{angel2017local} regarding the expected number of swaps at a given location in $U$.
Let $C(x, y)$ be the {\bf swap time} for particles $x$ and $y$ in the limit $U$. That is,
$$
C(x, y) = \sup \{ t: [U(x, t) - U(y, t)][U(x, 0) - U(y, 0)] > 0. \}.
$$
If $x$ and $y$ never cross in $U$, then $C(x, y) = \infty.$
On the event that $C(x, y) < \infty$, we can define the swap location 
$$
B(x, y) = \min \{U(x, C(x, y)), U(y, C(x, y))\}.
$$
For $i \in \Z$ and $t \in [0, \infty)$, we can now define 
$$
W(i, t) = \card{ \{(x, y) : B(x, y) = i, C(x, y) \le t  \}}.
$$
The function $W(i, t)$ counts the number of swaps at location $i$ up to time $t$.
\begin{theorem}
\label{T:swaps}
Let $i \in \Z$ and $t \in [0, \infty)$. Then $\expt W(i, t) = \frac{4t}{\pi}.$
\end{theorem}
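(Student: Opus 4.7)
The plan is to show $\expt W(i,t) = ct$ via a stationarity argument, then pin down $c = 4/\pi$ by a mass-balance argument comparing the local limit to the prelimit.

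For the linearity, spatial stationarity of $U$ (Theorem \ref{T:local}(i)) gives $\expt W(i,t) = \expt W(0,t) =: w(t)$, independent of $i$. The stationary-increment property (Theorem \ref{T:local}(ii)) implies that swaps of $U$ at location $i$ during $[s,s+t]$ are counted by $W(i,t)$ applied to the shifted swap process $U(\cdot,s,\cdot) \eqd U$, so they have expected count $w(t)$. This yields $w(s+t) = w(s) + w(t)$, and since $w$ is nondecreasing, $w(t) = ct$ for some $c \ge 0$.

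To identify $c$, I iterate Theorem \ref{T:basic-time-stat}, concluding that all coordinates of the swap sequence $(K_1, \ldots, K_N)$ of $\sig^n$ share a common marginal distribution equal to the law of $K_1$ (the shift identity forces $K_j \eqd K_{j+1}$ for $j \le N-1$). Hence $\expt |\{j \le s : K_j = k\}| = s \cdot \prob(K_1 = k)$. Fix $\al \in (-1,1)$ and $k_n/n \to (1+\al)/2$; since $W_n^{k_n}(0,t)$ counts swaps at $k_n$ up to global time $nt/\sqrt{1-\al^2}$,
\[
\expt W_n^{k_n}(0,t) \;=\; \frac{nt}{\sqrt{1-\al^2}} \cdot \prob(K_1 = k_n).
\]
The weak convergence $U_n^{k_n} \cvgd U$ (Theorem \ref{T:local}) gives $W_n^{k_n}(0,t) \cvgd W(0,t)$, and uniform integrability (see below) lets me pass to expectations. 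Equating with $\expt W(0,t) = ct$ forces
\[
n \prob(K_1 = k_n) \longrightarrow c\sqrt{1-\al^2} \qquad \mathforall \al \in (-1,1).
\]
Finally, combining this pointwise limit with the normalization $\sum_{k=1}^{n-1} \prob(K_1 = k) = 1$ via a Riemann-sum / tightness argument yields $\int_{-1}^{1} \tfrac{c\sqrt{1-\al^2}}{2}\, d\al = \tfrac{c\pi}{4} = 1$, so $c = 4/\pi$.

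The main technical obstacle is justifying the two analytic passages to the limit. Uniform integrability of $W_n^{k_n}(0,t)$ can be established by bounding its second moment, exploiting the joint stationarity $(K_i, K_{i+j}) \eqd (K_1, K_{1+j})$ (from iterating Theorem \ref{T:basic-time-stat}) to reduce the estimate to a pairwise swap-coincidence probability. For the Riemann-sum step, tightness near $\al = \pm 1$ can be extracted from the H\"older continuity of Theorem \ref{T:holder}: particles starting near the edge are constrained in how many swaps they can participate in over short time windows, which controls $\prob(K_1 = k)$ for $k$ near the endpoints and rules out mass escape in the Riemann sum.
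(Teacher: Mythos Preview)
The paper does not prove Theorem~\ref{T:swaps}; it is quoted as a result from \cite{angel2017local} (see the sentence immediately preceding the statement). So there is no ``paper's own proof'' to compare against, and your proposal is an independent attempt.

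Your high-level strategy is sound: stationarity forces $\expt W(i,t)=ct$, and the mass-balance identity $\sum_k \prob(K_1=k)=1$ together with the local-limit relation $n\prob(K_1=k_n)\to c\sqrt{1-\al^2}$ pins down $c=4/\pi$. This is a nice self-contained route that avoids any explicit combinatorics. By contrast, the argument in \cite{angel2017local} has direct access to the exact distribution of $K_1$ (a consequence of the Edelman--Greene bijection and the hook-length formula), from which the semicircle asymptotic $n\prob(K_1=k_n)\to \tfrac{4}{\pi}\sqrt{1-\al^2}$ is immediate; one then only needs the convergence of expectations in one direction, and Fatou already gives $\expt W(0,t)\le 4t/\pi$.

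That said, your two ``technical obstacles'' are real gaps as written. For uniform integrability, reducing the second moment to $\sum_j \prob(K_1=k_n,\,K_{1+j}=k_n)$ is correct, but you give no mechanism for bounding this sum by $O(1/n)$ uniformly in $n$; the tools available in this paper (Theorems~\ref{T:basic-time-stat}--\ref{T:holder}) do not obviously yield such a pair-correlation estimate. For the Riemann-sum step, pointwise convergence of $f_n(\al):=n\prob(K_1=\lfloor n(1+\al)/2\rfloor)$ does not by itself give $\int f_n\to\int f$; you need a uniform domination or tightness of the family $\{f_n\}$ near $\al=\pm1$, and your appeal to H\"older continuity is too vague---Theorem~\ref{T:holder} bounds particle displacement, not the probability that the \emph{first} swap occurs at a given edge location. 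Both issues are repairable (indeed trivially so if one imports the explicit formula for $\prob(K_1=k)$), but as stated the proposal is a plausible outline rather than a proof.
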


\begin{section}{Existence of local speeds}
\label{S:existence}
In this section, we prove that particles have speeds in the local limit $U$. To do this, we first show that the environment of $U$ is stationary from the point of view of a particle.
\begin{theorem}
\label{T:time-stat}
For any particle $y$, and any time $t \in [0, \infty)$, we have that
\begin{equation}
\label{E:complicated-stat}
\lf[U(U(y, t) + \cdot, t, s) - U(y, t) \rg]_{s \ge 0} \eqd \lf[ U(y + \cdot, s) - U(y, 0) \rg]_{s \ge 0}. 
%
%
\end{equation}
This implies that all particle trajectories have stationary increments. That is, for any $y \in \Z$ and $t \in  [0, \infty)$, we have that
\begin{equation}
\label{E:stat-inc}
\lf[U(y, t + s) - U(y, t) \rg]_{s \ge 0} \eqd \lf[U(y, s) - U(y, 0) \rg]_{s \ge 0}.
\end{equation}
\end{theorem}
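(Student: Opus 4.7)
I first observe that \eqref{E:stat-inc} is a direct consequence of \eqref{E:complicated-stat}: setting $x = 0$ and using $U(U(y,t), t, s) = U(U^{-1}(U(y,t), t), t+s) = U(y, t+s)$ together with $U(y, 0) = y$ (property (v) of Theorem \ref{T:local}) converts \eqref{E:complicated-stat} at $x = 0$ into \eqref{E:stat-inc}. For the main identity \eqref{E:complicated-stat}, set $V(x, s) := U(x, t, s)$ and $Y := U(y, t)$. By property (ii), $V \eqd U$, and $V(x, 0) = x$ by construction. With the shift notation $\theta^a W(x, s) := W(a+x, s) - a$, the LHS of \eqref{E:complicated-stat} is $\theta^Y V$ while the RHS is $\theta^y U$. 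Property (i) gives $\theta^y U \eqd U$, and $V \eqd U$, so the goal reduces to showing $\theta^Y V \eqd V$.

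The difficulty is that $Y$ depends on $V$ through their common dependence on $U$, so one cannot conclude $\theta^Y V \eqd V$ by direct application of (i). I overcome this via an averaging argument. For each $k \in \Z$, applying the shift $\tau^k$ of property (i) to $U$ yields $\tau^k U \eqd U$, and a direct computation shows $(\tau^k U)(\cdot, t, \cdot) = \tau^k V$ and $(\tau^k U)(y, t) = U(y+k, t) - k$. Setting $Y_k := U(y+k, t)$ (so $Y = Y_0$), the pair $(\tau^k V, Y_k - k) \eqd (V, Y)$, and since $\theta^{Y_k - k}(\tau^k V) = \theta^{Y_k} V$, it follows that $\theta^{Y_k} V \eqd \theta^Y V$ for every $k \in \Z$. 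Consequently, for any bounded measurable $f$,
$$
\expt f(\theta^Y V) = \expt\left[\frac{1}{2K+1} \sum_{k=-K}^{K} f(\theta^{Y_k} V)\right] = \expt\left[\frac{1}{2K+1} \sum_{j \in S_K} f(\theta^j V)\right],
$$
where $S_K := \{Y_k : k \in [-K, K]\}$ is the image of $\{y-K, \ldots, y+K\}$ under the bijection $U(\cdot, t)$.

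Since particle displacements $|Y_k - (y+k)|$ are almost surely finite---inherited from the cadlag structure of swap functions, with expected sizes controllable via Theorem \ref{T:swaps}---the symmetric difference $|S_K \triangle \{y-K, \ldots, y+K\}|$ is $o(K)$ in expectation. Combined with spatial ergodicity of $V$ (inherited from $U$ by the mixing in (i)), the ergodic theorem gives $\frac{1}{2K+1} \sum_{j=y-K}^{y+K} f(\theta^j V) \to \expt f(V)$ in $L^1$, so in the limit $K \to \infty$ we obtain $\expt f(\theta^Y V) = \expt f(V)$ for all bounded measurable $f$. Hence $\theta^Y V \eqd V \eqd U$, completing the proof. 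The main technical obstacle is the displacement tail bound needed to handle the boundary discrepancy in the averaging step.
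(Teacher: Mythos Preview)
Your argument is correct, but it is considerably more elaborate than the paper's. The paper proves \eqref{E:stat-inc} first by a direct four-line computation: split
\[
\prob\big([U(0,t+s)-U(0,t)]_{s\ge 0}\in A\big)=\sum_{j\in\Z}\prob\big([U(0,t+s)-j]_{s\ge 0}\in A,\;U(0,t)=j\big),
\]
apply the deterministic spatial shift by $-j$ to each summand (turning $\{U(0,t)=j\}$ into $\{U(-j,t)=0\}$), and observe that the sum over $j$ reassembles exactly into
\[
\prob\big(U(U^{-1}(0,t),t+s)_{s\ge 0}\in A\big)=\prob\big(U(0,t,s)_{s\ge 0}\in A\big)=\prob\big(U(0,s)_{s\ge 0}\in A\big),
\]
the last step by time-increment stationarity. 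The same decomposition handles \eqref{E:complicated-stat}. The point you missed is that conditioning on the value $U(y,t)=j$ converts the random shift into a deterministic one, so spatial stationarity applies termwise with no need for ergodicity, averaging, or any displacement control.

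Your averaging route does work, and the ``main technical obstacle'' you flag is milder than you suggest: you do not need $L^1$ displacement bounds. Since $|U(0,t)|<\infty$ almost surely, for any $\de>0$ choose $M$ with $\prob(|U(0,t)|>M)<\de$; spatial ergodicity gives that the fraction of $i\in I_K$ with $|U(i,t)-i|>M$ tends to at most $\de$, while among the remaining indices at most $2M$ lie within $M$ of the boundary of $I_K$. Hence $|S_K\triangle I_K|/(2K+1)\to 0$ almost surely, with no appeal to Theorem \ref{T:swaps}. So your proof closes, but the paper's conditioning trick is the natural and much shorter approach.
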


\begin{proof}
We will first prove \eqref{E:stat-inc} and then discuss what changes need to be made to prove the more general version \eqref{E:complicated-stat}. By spatial stationarity it suffices to prove \eqref{E:stat-inc} when $y = 0$. Let $A$ be any set in the Borel $\sig$-algebra generated by the Skorokhod topology on cadlag functions from $[0, \infty)$ to $\Z$. We compute
\begin{equation}
\label{E:U-split}
\prob \lf(\lf[U(0, t + s) - U(0, t) \rg]_{s \ge 0} \in A \rg)
\end{equation}
by splitting up the event above depending on the value of $U(0, t)$. This gives that \eqref{E:U-split} is equal to
\begin{align*}
\sum_{j \in \Z} \prob \lf(\lf[U(0, t + s) - j \rg]_{s \ge 0} \in A, \;\;U(0, t) = j \rg) &= \sum_{j \in \Z} \prob \lf(U( - j, t + s)_{s \ge 0} \in A, \;\; U(- j, t) = 0 \rg) \\
&= \prob \lf(U(U^{-1}(0, t), t + s)_{s \ge 0} \in A \rg) \\
&= \prob \lf(U(0, t, s)_{s \ge 0} \in A\rg) \\
&=  \prob (U(0, s)_{s \ge 0} \in A).
\end{align*}
The first equality above follows from spatial stationarity of $U$. The third equality is the definition of the time increment of $U$, and the final equality follows from the stationarity of time increments.

\medskip

The proof of \eqref{E:complicated-stat} is notationally more cumbersome, but follows the exact same steps in terms of splitting up the sum into the values of $U(y, t)$ and then applying spatial stationarity and stationarity of time increments.
\end{proof}

Now recall that $Q(x, t)$ is the number of swaps made by particle $x$ in $U$ in the interval $[0, t]$. Specifically,
$$
Q(x, t) = \card{\lf\{ r \in [0, t] : \lim_{s \to r^-} U(x, s) \ne U(x, r)\rg\}}.
$$
In order to apply the ergodic theorem to prove that particles have speeds, it is necessary to show that $Q(x, t) \in L^1$. To do this, we use a spatial stationarity argument to relate $Q(x, t)$ to $W(0, t)$, the number of swaps at location $0$ up to time $t$. Recall that $C(x, y)$ is the swap time of particles $x$ and $y$, and $B(x, y)$ is the swap location.

\begin{lemma}
\label{L:finite-exp-chunks}
In the local limit $U$, for any $x$ we have $\expt Q(x, t) = 8t/\pi$.
\end{lemma}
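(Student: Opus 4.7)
The plan is to relate $\expt Q(x,t)$ to $\expt W(i,t) = 4t/\pi$ through a double-counting identity combined with spatial stationarity. The basic observation is that every swap is simultaneously a swap ``at'' some location $i$ and a swap ``for'' two particles (the one at position $i$ and the one at position $i+1$ just before the event). I would introduce the refinement $Q(x,t;I)$ to denote the number of swaps involving particle $x$ in $[0,t]$ whose pre-swap position of $x$ lies in $I$, and establish the deterministic identity
$$
\sum_{x\in\Z} Q(x,t;[-N,N]) \;=\; W(-N-1,t)+2\sum_{i=-N}^{N-1} W(i,t)+W(N,t)
$$
for every $N\in\nat$. The boundary $W$-terms account for swaps at locations $-N-1$ and $N$, each of which has exactly one of its two participants with pre-swap position in $[-N,N]$.

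Taking expectations, the right-hand side equals $(2N+1)\cdot 8t/\pi$ by Theorem \ref{T:swaps} and spatial stationarity. For the left-hand side I would apply Tonelli (every quantity involved is a nonnegative integer) and then use spatial stationarity of $U$ to write $\expt Q(x,t;[-N,N]) = \expt Q(0,t;[-N-x,N-x])$. Substituting $y = -x$, pulling the sum back inside the expectation, and observing that each individual swap by particle $0$ at some fixed position $j$ is captured by exactly $2N+1$ of the intervals $[y-N,y+N]$ as $y$ ranges over $\Z$, I would obtain
$$
\sum_{x\in\Z}\expt Q(x,t;[-N,N]) \;=\; (2N+1)\,\expt Q(0,t).
$$
Equating the two sides and cancelling $(2N+1)$ yields $\expt Q(0,t) = 8t/\pi$.

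The only thing to watch is circularity: a priori $\expt Q(0,t)$ could be $+\infty$, so I cannot assume finiteness at the outset. This is why it is important that every quantity in the identity is a nonnegative integer; Tonelli's theorem then applies unconditionally, both sides of the final equality are well-defined in $[0,\infty]$, and the identity itself forces $\expt Q(0,t)$ to be finite and equal to $8t/\pi$. The only delicate step is getting the boundary terms right in the initial counting identity, which is a purely combinatorial bookkeeping exercise with no probabilistic content.
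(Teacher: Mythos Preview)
Your proof is correct. Both your argument and the paper's rest on the same two ingredients --- spatial stationarity of $U$ and the fact that every swap is simultaneously a swap at one location and a swap for two particles --- to arrive at $\expt Q(x,t) = 2\,\expt W(0,t)$. The execution differs, however. The paper avoids the finite window entirely: it writes
\[
\expt Q(x,t) \;=\; \sum_{y \ne x}\sum_{i \in \Z} \prob\big(C(x,y) \le t,\; B(x,y) = i\big),
\]
shifts each term by $-i$ via spatial stationarity to move the swap location to $0$, reindexes the resulting double sum over ordered pairs $(z,y)$ with $z\ne y$, and reads off $2\,\expt W(0,t)$ in four lines. Your route --- truncating to $[-N,N]$, tracking boundary contributions $W(-N-1,t)$ and $W(N,t)$, and then cancelling the factor $(2N+1)$ --- reaches the same destination but with more bookkeeping. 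The paper's decomposition over (swap partner, swap location) is what lets it bypass the window; your decomposition over (particle, pre-swap position in a window) is morally equivalent but less economical.
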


\begin{proof}
We have
\begin{align*}
\expt Q(x, t) &= \sum_{\substack{y \in \Z \\ y \ne x}} \sum_{i \in \Z} \prob(C(x, y) \le t, B(x, y) = i) \\
&= \sum_{\substack{y \in \Z \\ y \ne x}} \; \sum_{i \in \Z} \prob(C(x - i , y - i) \le t, B(x - i, y - i) = 0) \\
&= \sum_{\substack{y, z \in \Z \\ y \ne z}} \prob(C(z , y) \le t, B(z, y) = 0) \\
&= 2 \expt W(0, t).
\end{align*}
The second equality here comes from spatial stationarity of the process $U$. By Theorem \ref{T:swaps}, $\expt W(0, t) = 4t/\pi$, completing the proof.
\end{proof}
We can now prove every part of Theorem \ref{T:main-local} except for the fact that the speed distribution is bounded. First define
\begin{equation}
\label{E:St}
S(x, t) = \frac{U(x, t) - U(x, 0)}{t} 
\end{equation}
to be the average speed of particle $x$ up to time $t$.

\begin{corollary}
\label{C:speed-exist}
For every $x \in \Z$, the limit
$$
S(x) = \lim_{t \to \infty} S(x, t)  \qquad \text{exists } \as.
$$
$S(x)$ is a symmetric random variable with distribution $\mu$ independent of $x$. 
Moreover, the random function $S: \Z \to \real$ is stationary and mixing of all orders with respect to the spatial shift $\tau$ given by  $\tau S(x) = S(x + 1)$.
\end{corollary}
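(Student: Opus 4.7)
The plan is to apply Birkhoff's ergodic theorem along integer times and then upgrade to continuous times, before deducing the distributional properties from the corresponding properties of $U$.

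First I would fix $x \in \Z$ and set $X_k = U(x,k) - U(x, k-1)$ for $k \geq 1$. By Theorem \ref{T:time-stat}, the sequence $(X_k)_{k \geq 1}$ is stationary, and by Lemma \ref{L:finite-exp-chunks} together with the obvious bound $|X_k| \leq Q(x,k) - Q(x,k-1)$, these variables lie in $L^1$ with $\expt|X_k| \leq 8/\pi$. Birkhoff's ergodic theorem then yields that
\begin{equation*}
\frac{U(x,n) - U(x,0)}{n} \;=\; \frac{1}{n}\sum_{k=1}^n X_k \;\longrightarrow\; S(x)
\end{equation*}
almost surely and in $L^1$, where $S(x) = \expt[X_1 \mid \scrI]$ for the invariant $\sigma$-algebra $\scrI$ of the time shift. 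To pass from integer $n$ to all $t \to \infty$, I would apply Birkhoff a second time to the stationary increments of $Q(x,\cdot)$, which gives that $Q(x,n)/n$ converges a.s. and hence $(Q(x,n+1) - Q(x,n))/n \to 0$ a.s. Since for $t \in [n, n+1]$,
\begin{equation*}
\lf|S(x,t) - \tfrac{U(x,n)-U(x,0)}{n}\rg| \;\leq\; \frac{Q(x,n+1)-Q(x,n)}{n} + \frac{|U(x,n)-U(x,0)|}{n}\cdot\frac{1}{n},
\end{equation*}
both correction terms vanish almost surely, so $S(x,t) \to S(x)$ a.s.\ along all real $t$.

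Next I would read off the distributional statements from Theorem \ref{T:local}. Spatial stationarity $\tau U(x,t) = U(x+1,t) - 1$ implies that $U(x+1,t) - U(x+1,0) = \tau U(x,t) - \tau U(x,0)$, so passing to the limit gives $S(x+1) = \tau S(x)$; equivalently, $S$ is a measurable factor of $U$ that intertwines the spatial shifts. In particular, the distribution of $S(x)$ does not depend on $x$, and the spatial stationarity and mixing-of-all-orders of $U$ transfer immediately to $S$, since joint distributions of $(S(x_1+n),\ldots,S(x_k+n))$ are measurable functionals of $(\tau^n U(x_1,\cdot),\ldots,\tau^n U(x_k,\cdot))$ and almost sure convergence allows us to exchange limits with the mixing statements. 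Symmetry of $\mu$ follows from property (iii) of Theorem \ref{T:local}: the law of the path $U(x,\cdot)$ equals that of $-U(-x,\cdot)$, so $S(x) \eqd -S(-x) \eqd -S(x)$ using the already-proven $x$-independence.

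The main obstacle, and the reason this argument does not quite reduce to a one-line citation of Birkhoff, is the upgrade from discrete to continuous time: a priori a particle could make an enormous number of swaps during a single unit time interval, and one needs the $L^1$ bound on $Q$ from Lemma \ref{L:finite-exp-chunks} (which itself relies on the spatial stationarity computation relating $Q$ to $W$, and on the swap-rate formula $\expt W(i,t) = 4t/\pi$ from Theorem \ref{T:swaps}) to rule this out. Everything else is a bookkeeping exercise in transporting stationarity and mixing through an almost-sure limit.
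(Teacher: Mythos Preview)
Your proof is correct and follows essentially the same approach as the paper: apply Birkhoff's ergodic theorem to the stationary increments of $U(x,\cdot)$ (stationarity from Theorem~\ref{T:time-stat}, $L^1$ bound from Lemma~\ref{L:finite-exp-chunks}), then read off the distributional properties of $S$ from those of $U$ in Theorem~\ref{T:local}. The only difference is that you spell out the discrete-to-continuous-time upgrade explicitly via a second application of Birkhoff to the $Q$-increments, whereas the paper leaves this implicit.
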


\begin{proof}
The function $U(x, 1) - U(x, 0)$ is in $L^1$ by Lemma \ref{L:finite-exp-chunks} since $|U(x, 1) - U(x, 0)|$ is bounded by $Q(x, t)$. The existence of the limit follows by the stationary of particle increments in Theorem \ref{T:time-stat} and Birkhoff's ergodic theorem.

\medskip

 The fact that the distribution of $S(x)$ is independent of $x$ follows from spatial stationarity of $U$, and all the properties of $S(\cdot)$ come from the corresponding properties of $U$.
\end{proof}

\end{section}
\section{Boundedness of local speeds}
\label{S:bounded-speed}

In this section, we prove that the local speed distribution $\mu$ is bounded, completing the proof of Theorem \ref{T:main-local}.
\begin{theorem}
\label{T:bounded-speed}
$\supp(\mu) \sset [-\pi, \pi]$.
\end{theorem}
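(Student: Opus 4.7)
The plan is to proceed by contradiction, constructing an unusually long increasing subsequence in the swap sequence of a typical large-$n$ sorting network and invoking Theorem \ref{T:subsequence} for a contradiction. Suppose $\mu((c, \infty)) \geq \eta > 0$ for some $c > \pi$ (WLOG by the symmetry of $\mu$), and fix $c' \in (\pi, c)$. Since $S(x, t) \to S(x)$ almost surely by Corollary \ref{C:speed-exist}, Egorov's theorem yields a deterministic $T_0$ such that $\prob(U(0, T_0) - U(0, 0) \geq c' T_0) \geq \eta/2$. I will call a particle $x$ \emph{fast in slot $[kT_0, (k+1)T_0]$} if $U(x, (k+1)T_0) - U(x, kT_0) \geq c' T_0$; by spatial stationarity and stationarity of time increments (Theorem \ref{T:time-stat}), this event has probability at least $\eta/2$ for every particle and every slot.

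The key geometric observation is that a single fast particle contributes an increasing subsequence of length $\lfloor c' T_0\rfloor$ to the swap sequence of its slot. Indeed, if $x$ is fast in $[kT_0, (k+1)T_0]$ with $y = U(x, kT_0)$, then for each $j \in \{0, 1, \ldots, \lfloor c' T_0\rfloor - 1\}$ let $\tau_j$ be the \emph{last} time within the slot at which $x$ upcrosses level $y + j$. After $\tau_j$ the trajectory stays at height at least $y + j + 1$, so any later visit to level $y + j + 2$ forces a strictly-later upcrossing of $y + j + 1$; hence $\tau_0 < \tau_1 < \cdots < \tau_{\lfloor c' T_0\rfloor - 1}$. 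Each $\tau_j$ is a swap at position $y + j$, so the swaps at these times are strictly increasing in both time and position.

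To patch these local contributions into a long increasing subsequence in $\sig^n$, I would tile the interior of $[-1, 1]$ (in global coordinates) by $L \approx n/(c' T_0)$ disjoint spatial blocks of width $c' T_0$, with block $\ell$ centered at $\al_\ell$, and assign to block $\ell$ an actual-time window of length $nT_0/\sqrt{1 - \al_\ell^2}$ (corresponding to local time $T_0$ at position $\al_\ell$), with these windows chosen consecutively so that they are disjoint and in increasing order of $\ell$. Using the convergence $U_n^{\lfloor (1 + \al_\ell) n/2 \rfloor} \to U$ from Theorem \ref{T:local} together with the time-stationarity to shift to the chosen starting time, and taking $T_0$ large enough that spatial ergodicity of $U$ guarantees a fast particle near the left edge of the block with probability close to one, we find for nearly every $\ell$ a fast particle whose LIS contribution lies entirely in the block's spatial and temporal window. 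Concatenating these contributions in order of $\ell$ produces an increasing subsequence (in both time and position) in the swap sequence of $\sig^n$ of length $\approx L c' T_0 = n$, living within total actual time $\sum_\ell nT_0/\sqrt{1 - \al_\ell^2} \approx \pi n^2/(2c')$, i.e., global time $t = \pi/c'$.

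Theorem \ref{T:subsequence} bounds the longest increasing subsequence at global time $\pi/c'$ by $n\sqrt{(\pi/c')(2 - \pi/c')} = n\sqrt{1 - (1 - \pi/c')^2}$, which is strictly less than $n$ since $c' > \pi$. Our construction exceeds this by a positive fraction of $n$, giving the desired contradiction provided error terms are controlled. The principal technical obstacle will be making the patching rigorous: a priori, the local limits at different positions $\al_\ell$ in a single $\sig^n$ may be correlated, so producing a fast particle in essentially every one of the $L$ blocks with high probability requires careful use of the mixing properties from Theorem \ref{T:local}(i), together with second-moment or tightness arguments and a mild truncation of $\al$ away from $\pm 1$ (where the time rescaling $1/\sqrt{1-\al^2}$ blows up).
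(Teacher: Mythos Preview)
Your approach is essentially the paper's: tile space by blocks of a fixed width $h$, assign consecutive disjoint time windows of the correct locally rescaled length, use the local limit to produce a fast particle crossing each block, and concatenate the resulting length-$h$ increasing runs to contradict Theorem~\ref{T:subsequence}. The paper packages the local-limit input as the intermediate Lemma~\ref{L:box-implies-diagonals} (that $\prob(\exists\, x<0:\,U(x,t)>(\pi+\ep)t)$ cannot tend to $1$), but the geometry and the time computation $\sum_\ell nT_0/\sqrt{1-\al_\ell^2}\approx \pi n^2/(2c')$ are identical to yours.

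Where you diverge is in the ``principal technical obstacle''. You do not need to control correlations between blocks at all, and your proposed tool---the spatial mixing in Theorem~\ref{T:local}(i)---would not help anyway: that statement concerns mixing \emph{within} a single copy of the limit $U$, not the joint behaviour of the local windows $U_n^{k_n}$ and $U_n^{k_n'}$ at different macroscopic positions in the same $\sig^n$. The paper sidesteps the whole issue by bounding the \emph{expectation} of $L_n$ rather than $L_n$ itself. Writing $L_n \ge \tfrac{nh}{2}\int_{A_n} Z_{\al,n}\,d\al$ with $Z_{\al,n}\in\{0,1\}$ the indicator that some particle crosses block $\al$ in its assigned time window, one applies Fubini and then the one-point convergence $\expt Z_{\al,n}\to \prob(A_{h/(\pi+\ep),\ep})$ for each fixed $\al$ (which uses only Theorem~\ref{T:local} at a single location), followed by bounded convergence. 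This gives $\expt L_n(\pi/(\pi+\ep/2)) \ge n(1-\de)^2$ directly; since $L_n\le n$, this already contradicts Theorem~\ref{T:subsequence}. No second moments, no mixing, no joint control of any kind is required.
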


We first prove a lemma concerning the existence of fast particles at finite times in the local limit $U$.

\begin{lemma}
\label{L:box-implies-diagonals}
For every $\ep > 0$, we have that
$$
\liminf_{t\to \infty}\prob (\text{There exists } x < 0 \mathsuchthat \; U(x, t) > (\pi + \ep)t ) < 1.
$$
%
\end{lemma}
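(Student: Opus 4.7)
The plan is to argue by contradiction: assume $\liminf_{t \to \infty} \prob(E_t) = 1$, where $E_t := \{\exists x < 0 : U(x, t) > (\pi + \ep) t\}$, and derive a contradiction with Theorem~\ref{T:subsequence}. First, since the events $E_{t, M} := \{\exists x \in [-M, 0) : U(x, t) > (\pi + \ep) t\}$ increase monotonically to $E_t$ as $M \to \infty$, for any $\de > 0$ I can fix a large $t_0$ and a finite $M$ so that $\prob(E_{t_0, M}) > 1 - \de$.

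The key observation is that on $E_{t_0, M}$ the fast particle $x \in [-M, 0)$ visits each of the positions $x + 1, x + 2, \ldots, \ceil{(\pi + \ep) t_0}$ for the first time at a strictly increasing sequence of times (a short induction using the nearest-neighbour jump property rules out first reaching $p + 1$ from $p + 2$), and each first visit is realised by a right-swap at the position immediately to its left. These right-swaps therefore form a strictly increasing subsequence in the swap sequence of $U$, of length at least $(\pi + \ep) t_0$ and with positions contained in the bounded interval $[-M, \ceil{(\pi + \ep) t_0})$.

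Next, I transfer this to the finite sorting network $\sig^n$. For a parameter $\beta \in (0, 1)$ to be optimised, place $K$ centres $\al_1 < \cdots < \al_K \in [-\beta, \beta]$ with consecutive spacing at least $(M + \ceil{(\pi + \ep) t_0} + 1)/n$, together with $K$ disjoint time windows $[\tau_j, \tau_j + n t_0/\sqrt{1 - \al_j^2}] \subset [0, N]$, arranged so that the spatial and temporal supports of the swap subsequences produced by different windows are pairwise disjoint and appear in matching order. By the time-shifted version of Theorem~\ref{T:local}, the probability that window $j$ contains a fast particle of the above type tends to $\prob(E_{t_0, M}) > 1 - \de$ as $n \to \infty$. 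Writing $X_j$ for the indicator of a fast particle in window $j$, Markov's inequality applied to $K - \sum_j X_j$ then yields $\sum_j X_j \ge K(1 - O(\de))$ with probability bounded away from $0$ as $n \to \infty$. Concatenating the increasing subsequences from the active windows gives an increasing subsequence of the global swap sequence of length at least $K(1 - O(\de))(\pi + \ep) t_0$.

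Finally, maximising $K$ subject to the spatial constraint $K \le 2\beta n/(M + \ceil{(\pi + \ep) t_0} + 1)$ and the temporal constraint $K \le (n - 1)\sqrt{1 - \beta^2}/(2 t_0)$, and balancing them at $\beta = (\pi + \ep)/\sqrt{16 + (\pi + \ep)^2}$ (valid provided $M$ is small compared to $t_0$), gives $K(\pi + \ep) t_0 \approx 2 n(\pi + \ep)/\sqrt{16 + (\pi + \ep)^2}$; because $\pi > 4/\sqrt{3}$, this exceeds $n$ by a positive constant factor depending only on $\ep$. This contradicts Theorem~\ref{T:subsequence}, which forces the longest increasing subsequence of the swap sequence to be at most $n + \ep_1 n$ with probability tending to $1$ for any $\ep_1 > 0$. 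The main obstacle is that the numerical balance requires $M \le C t_0$ with $C < 2\sqrt{(\pi + \ep)^2 - 4} - (\pi + \ep)$, a bound not immediately supplied by the monotone convergence truncation alone; controlling $M/t_0$ seems to call for a sharper tail estimate on $U(0, t_0)$ than the Markov bound obtained directly from $\expt Q(0, t_0) = 8 t_0/\pi$ in Lemma~\ref{L:finite-exp-chunks}.
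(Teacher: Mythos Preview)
Your overall strategy—plant many local windows, in each find a fast particle that generates an increasing run of swaps, then concatenate—is exactly the paper's strategy. But there are two genuine gaps, and the one you flag at the end is actually the less serious of the two.

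\medskip

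\textbf{The spatial bookkeeping is off by a factor of two, and this is fatal.} With centres $\al_j\in[-\beta,\beta]$ and $k_{n,j}=\lfloor n(1+\al_j)/2\rfloor$, one has $\Delta k = \tfrac{n}{2}\Delta\al$. The swap positions produced in window $j$ lie in $[k_{n,j}-M,\,k_{n,j}+(\pi+\ep)t_0)$, so disjoint ordered concatenation requires $k_{n,j+1}-k_{n,j}\ge M+(\pi+\ep)t_0$, i.e.\ $\Delta\al\ge 2(M+(\pi+\ep)t_0)/n$, not $(M+(\pi+\ep)t_0)/n$. The correct spatial constraint is therefore $K\le \beta n/(M+(\pi+\ep)t_0)$, half of what you wrote. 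Balancing this against the temporal constraint (even with $M=0$) gives $\beta=(\pi+\ep)/\sqrt{4+(\pi+\ep)^2}$ and a concatenated subsequence of length at most $n(\pi+\ep)/\sqrt{4+(\pi+\ep)^2}\approx 0.84\,n$, which does \emph{not} contradict $L_n(1)\approx n$. One can check that restricting to a time fraction $T<1$ and comparing instead with $L_n(T)\approx n\sqrt{T(2-T)}$ does not save the argument either: the inequality $(\pi+\ep)^2 T(1-T)^2>4(2-T)$ fails for every $T\in(0,1)$. So the rectangular placement of windows in $[-\beta,\beta]\times[0,TN]$ is intrinsically too wasteful.

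\medskip

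\textbf{How the paper avoids both problems.} First, instead of tracking a particle from a start in $[-M,0)$, the paper asks only whether \emph{some} particle crosses the fixed spatial window $[j_{\al,n},\,j_{\al,n}+h]$ during a short time interval; this yields exactly $h$ increasing swaps regardless of where the particle came from, so no truncation parameter $M$ is needed and the event is a continuity set for the local limit directly. Second—and this is the idea you are missing—the time windows are placed along the curve
\[
t_{\al,n}\;=\;\Big\lfloor \binom{n}{2}\,\frac{\arcsin\al+\pi/2}{\pi+\ep/2}\Big\rfloor,
\]
whose $\al$-derivative carries exactly the factor $1/\sqrt{1-\al^2}$ that matches the local time scaling. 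This lets one use the full range $\al\in(-1+\de,1-\de)$ (so the concatenated length is $(1-O(\de))n$) while all windows fit into the first $\pi/(\pi+\ep/2)<1$ fraction of global time. Comparing with $L_n\big(\pi/(\pi+\ep/2)\big)\approx n\sqrt{T(2-T)}<n$ then gives the contradiction.
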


\begin{proof} Let $A_{t, \ep}$ be the event in the statement of the lemma.
Suppose that for some $\ep > 0$, that 
$
\lim_{t\to \infty}\prob (A_{t, \ep}) = 1.
$
Fix $\de > 0$, and let $h \in \nat$ be large enough so that
\begin{equation}
\label{E:h-lower-bd}
 \frac{h \ep}{2(\pi + \ep)\sqrt{1 - \de^2}} \ge 2.
\end{equation}
For each $\al \in (-1, 1)$, define
$$
t_{\al, n} = \floor{{n \choose 2}\frac{\arcsin(\al) + \pi/2}{\pi + \ep/2}}\;, \qquad t_{\al,n}^+=t_{\al,n}+  \frac{hn}{(\pi + \ep)\sqrt{1 - \al^2}} ,\qquad j_{\al, n} = \floor{\frac{n(1+\al)}2}.
$$
For each $n \in \nat$ and $\al \in (-1, 1)$, consider the random variable 
$$
Z_{\al, n} := \indic \bigg( \exists x \in [1, n] \text{ such that }
\sig^n \lf(x, t_{\al, n} \rg) < j_{\al, n}, \;
\sig^n \lf(x, t_{\al, n}^+ \rg  ) >j_{\al, n}+ h \bigg).
$$
When $Z_{\al, n}=1$, there exists an increasing subsequence of swaps in the time interval $[t_{\al,n},t_{\al,n}^+]$ at locations $j_{\alpha,n},j_{\alpha,n}+1, \ldots, j_{\alpha,n}+h-1$. Consider the set
$$
A_n=\{\alpha\in(-1+\delta,1-\delta)\,:\,j_{\alpha,n}\in h \mathbb Z\}.
$$
A straightforward computation shows that for all large enough $n$, when $\alpha,\alpha'\in A_n$ and $j_{\alpha,n}\not=j_{\alpha',n}$ then the time intervals
$[t_{\alpha,n}, t_{\alpha,n}^+]$ and $[t_{\alpha',n}, t_{\alpha',n}^+]$ are disjoint (this is where condition \eqref{E:h-lower-bd} is used).  This implies that if $\al_1 < \al_2 \ddd < \al_m$ is a sequence in $A_n$ with $j_{\al_i} \neq j_{\al_{i+1}}$ for all $i$, and $Z_{\al_i, n} = 1$ for all $i$, then the increasing subsequences for each $\al_i$ can be concatenated to get an increasing subsequence of length $mh$ in the time interval $[t_{\al_1, n} , t_{\al_m, n}^+]$.

Now we can also assume that $n$ is large enough so that
$$
t_{\al, n}^+ \le {n \choose 2}\frac{\pi}{\pi + \ep/2}
$$
whenever $\al \in A_n$. Since the intervals $\{\alpha:j_{\alpha,n}=j\}$ are of
Lebesgue measure $2/n$, the longest increasing subsequence in the first $\pi/(\pi + \ep/2)$ fraction of swaps satisfies 
\begin{align}
\label{E:L-not-exp-yet} 
L _n \lf( \frac{\pi}{\pi + \ep/2} \rg) &\ge \frac{nh}2\int_{A_n} Z_{\al, n} d\al.
\end{align}
Observe that the boundary of $A_{t, \ep}$ in the space of swap functions is contained in the set of swap functions that have a swap at time $t$. This is a null set in $\prob_U$ by Theorem \ref{T:local} (iv), so $A_{t, \ep}$ is a set of continuity for $U$. The weak convergence in Theorem \ref{T:local} then implies that $\expt Z_{\al, n} \to \prob(A_{h/(\pi + \ep), \ep})$ for every $\al$.

\medskip

Choose $h$ large enough so that $\prob(A_{h/(\pi + \ep), \ep}) > 1 - \de$. Then $\lim_{n \to \infty} \expt Z_{\al, n} \ge 1-\de$ for every $\al$, and so by bounded convergence, 
\begin{align*}
2-2\delta = \lim_{n \to \infty} \int_{-1+\delta}^{1-\delta}\frac{\expt Z_{\alpha,n}}{1-\delta}  {\wedge} 1\,d\alpha \le \liminf_{n\to\infty} \int_{A_n} \frac{\expt Z_{\alpha,n}}{1-\delta}\,d\alpha+ (2-2\delta) \lf(1-\frac{1}h \rg).
\end{align*}
The last term is the limiting Lebesgue measure of $(-1+\delta,1-\delta)\setminus A_n$. Taking expectations in \eqref{E:L-not-exp-yet} and applying Fubini's Theorem then gives that for large enough $n$,
\begin{align*}
\expt L _n \lf( \frac{\pi}{\pi + \ep/2} \rg) \ge \frac{n}{2}(2-2\delta)(1-\delta).
\end{align*}
Taking $\de$ small enough given $\ep$ then contradicts Theorem \ref{T:subsequence}.
\end{proof}

We now show that the condition in Lemma \ref{L:box-implies-diagonals} implies that the speed is bounded, completing the proof of Theorem \ref{T:main-local}.

%
%
%

\begin{proof}[Proof of Theorem \ref{T:main-local}.]
Suppose that $\mu(\pi, \infty) > 0$, and fix $\ep > 0$ such that $\mu(\pi + 3\ep, \infty) > 0$. Then for any fixed $\de > 0$, by spatial ergodicity we can find an $m \in \nat$ such that
$$
\prob \lf( \text{There exists } x \in [-m, -1] \mathsuchthat \; S(x) > \pi + 3\ep \rg) > 1 - \de/2.
$$
Then there exist some $t_0 > 0$ such that for every $t > t_0$,
$$
\prob \lf( \text{There exists } x \in [-m, -1] \mathsuchthat \; \frac{U(x, t) - x}t > \pi + 2\ep \rg) > 1 - \de.
$$
If $t$ is chosen large enough so that $(\pi + 2\ep)t - m > (\pi + \ep)t$, then the above inequality immediately implies that
$
\prob \lf(A_{t, \ep} \rg) > 1 - \de.
$
As $\de$ was chosen arbitrarily, this contradicts Lemma \ref{L:box-implies-diagonals}.
\end{proof}

\section{Local swap rates}
\label{S:local-swap-rates}
The main goal of this section is to prove Theorem \ref{T:swap-rate}. We first recall the statement here.

\begin{customthm}{1.8}
Let $x \in \Z$, and let $Q(x, t)$  be the number of swaps made by particle $x$ up to time $t$ in the local limit $U$. Let $S(x)$ be the asymptotic speed of $x$, and let $\mu$ be the local speed distribution, as in Theorem \ref{T:main-local}. Then
$$
\lim_{t \to \infty} \frac{Q(x, t)}t = \int |y - S(x)|d\mu(y) \qquad \text{almost surely and in $L^1$.}
$$

\end{customthm}

This theorem allows us to control the number of swaps in $\sig^n$ between ``typical particles" moving with local speed at most $\pi + \ep$. This will imply a lower bound on the number of swaps in a random sorting network made by particles with speed greater than $\pi + \ep$, which in turn will allow us prove that limiting trajectories are $\pi\sqrt{1-y^2}$-Lipschitz. Specifically, we will need the following corollary in our proof of Theorem \ref{T:main}:

\begin{corollary} 
\label{C:av-swap-rate}
(i) For any $x$, the following statement holds almost surely for the local limit $U$.
$$
\lim_{t \to \infty} \frac{Q(x, t)}t \in [0, \pi].
$$

(ii) Let $X$ and $X'$ be two independent samples from the local speed distribution $\mu$. Then
$$
\expt |X - X'| = \expt\lf[\lim_{t \to \infty} \frac{Q(0, t)}{t} \rg]= \frac{8}{\pi}.
$$
\end{corollary}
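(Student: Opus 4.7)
The plan is to derive both parts directly from Theorem \ref{T:swap-rate}, using only the structural properties of the local speed distribution $\mu$ established in Theorem \ref{T:main-local}: $\mu$ is symmetric (hence has mean zero) and is supported in $[-\pi,\pi]$, and $S(x)\in[-\pi,\pi]$ almost surely. Set $\phi(s) := \int |y-s|\,d\mu(y)$, so that Theorem \ref{T:swap-rate} asserts $Q(x,t)/t \to \phi(S(x))$ almost surely and in $L^1$.

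For part (i), I would observe that $\phi$ is convex in $s$ as an average of convex absolute-value functions. Since $S(x)\in[-\pi,\pi]$ almost surely, it suffices to bound $\phi$ on $[-\pi,\pi]$, where by convexity the maximum is attained at an endpoint. Using $\supp(\mu)\sset[-\pi,\pi]$, we have $|y-\pi|=\pi-y$ for $\mu$-almost every $y$, so by symmetry of $\mu$,
\[
\phi(\pi) = \pi - \int y\,d\mu(y) = \pi,
\]
and likewise $\phi(-\pi)=\pi$. Thus $\phi(S(x))\in[0,\pi]$ almost surely, which is (i).

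For part (ii), the $L^1$ convergence in Theorem \ref{T:swap-rate} combined with Lemma \ref{L:finite-exp-chunks} (which gives $\expt Q(0,t) = 8t/\pi$ for all $t$) yields
\[
\expt \phi(S(0)) = \lim_{t\to\infty} \expt\lf[\frac{Q(0,t)}{t}\rg] = \frac{8}{\pi}.
\]
To identify this expectation with $\expt|X-X'|$, I would apply Fubini (justified by non-negativity) to write
\[
\expt \phi(S(0)) = \expt \int |y - S(0)|\,d\mu(y) = \int \expt |y - S(0)|\,d\mu(y) = \expt|X - X'|,
\]
where in the last equality $X = S(0) \sim \mu$ and $X'$ is an independent copy.

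The main obstacle is not in the corollary itself; all the substantive work lies in Theorem \ref{T:swap-rate} (the pointwise and $L^1$ swap-rate identity) and in Theorem \ref{T:main-local} (symmetry of $\mu$ and $\supp(\mu)\sset[-\pi,\pi]$). Given those, the only content is a one-variable convexity argument followed by a Fubini rearrangement; the subtle point worth flagging is that the bound $\pi$ rather than the naive $2\pi$ really uses both the symmetry of $\mu$ and the support constraint, via the computation $\phi(\pm\pi)=\pi$.
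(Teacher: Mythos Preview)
Your proof is correct and follows essentially the same approach as the paper. For (i), the paper symmetrizes via $2\phi(c)=\int(|y-c|+|y+c|)\,d\mu(y)\le \int 2\pi\,d\mu(y)$ rather than invoking convexity, but this is just a rephrasing of your endpoint computation $\phi(\pm\pi)=\pi$; for (ii), the paper likewise combines the $L^1$ limit in Theorem~\ref{T:swap-rate} (via Birkhoff) with Lemma~\ref{L:finite-exp-chunks} and the identification $\expt\phi(S(0))=\expt|X-X'|$.
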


The intuition behind Theorem \ref{T:swap-rate} is very simple. Since particles in $U$ have asymptotic speeds and $U$ is spatially ergodic, we can imagine $U$ as a collection of particles moving along linear trajectories with independent slopes sampled from the local speed distribution. With this heuristic, the quantity $\frac{Q(x, t)}t$ can be estimated as a sum of two integrals for large $t$: 
$$
\int_{S(x)}^\pi \mu(y, \infty) dy + \int_{-\pi}^{S(x)} \mu(-\infty, y)dy.
$$

\medskip

To make this intuition rigorous, we first prove a corresponding theorem for lines, and then use these lines to approximate the trajectory of $x$ in $U$. Let $L$ be a line with slope $c \in \real$ given by the formula $L(t) = ct + d$. Define 

\begin{equation*}
C^+(x, L, t) = \begin{cases} 
1, \qquad \text{if} \;\; U(x, 0) \le L(0) \mathand U(x, t) > L(t).\\
0, \qquad \text{else}.
\end{cases}
\end{equation*}

Define $C^+(L, t)$, the number of net upcrossings of the line $L$ in the interval $[0, t]$, by $C^+(L, t) = \sum_{x \in \Z} C^+(x, L, t)$. We then have the following proposition:

\begin{prop}
\label{P:line-rate} Let $L(t) = ct + d$. Then
\begin{equation*}
\frac{C^+(L, t)}t \to \int (y - c)^+d\mu(y) \qquad \as \text{ and in $L^1$}.
\end{equation*}
\end{prop}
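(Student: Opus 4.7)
The plan is to compute $\expt C^+(L, t)/t$ via spatial stationarity and Birkhoff, then upgrade to convergence in probability via a block-averaging argument, and finally to $L^1$ and almost sure convergence. Using $U(x, 0) = x$ (Theorem \ref{T:local}(v)), we have $C^+(L, t) = \#\{x \le d : U(x, t) > ct + d\}$. Writing $x = \lfloor d \rfloor - k$ with $e = d - \lfloor d \rfloor$, spatial stationarity (Theorem \ref{T:local}(i)) gives $\prob(U(x, t) > ct + d) = \prob(U(0, t) > ct + e + k)$, so an Abel summation yields
\begin{equation*}
\expt C^+(L, t) = \sum_{k \ge 0} \prob(U(0, t) > ct + e + k) = \expt(U(0, t) - ct - e)^+ + O(1).
\end{equation*}
The time-$1$ increments $U(0, s+1) - U(0, s)$ form a stationary $L^1$ sequence by Theorem \ref{T:time-stat} (with $\expt|U(0, 1)| \le \expt Q(0, 1) = 8/\pi$ from Lemma \ref{L:finite-exp-chunks}), so Birkhoff gives $U(0, t)/t \to S(0)$ in $L^1$, and $1$-Lipschitzness of $(\cdot - c)^+$ implies $\expt C^+(L, t)/t \to \expt(S(0) - c)^+ = \int (y - c)^+ d\mu(y)$.

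To promote this to convergence in probability, I would split the effective range $k \in [0, (\pi - c + \epsilon) t]$ into blocks $B_j$ of length $\lfloor \epsilon t \rfloor$, with $j = 0, \ldots, \lceil (\pi - c)/\epsilon \rceil$; the contribution from larger $k$ is negligible in expectation by the first paragraph together with boundedness of $\mu$ (Theorem \ref{T:main-local}). Setting $\tilde S(x, t) = (U(x, t) - x)/t$, on $B_j$ each indicator $\indic[\tilde S(\lfloor d \rfloor - k, t) > c + (e + k)/t]$ is sandwiched between $\indic[\tilde S > c + (j+1)\epsilon]$ and $\indic[\tilde S > c + j\epsilon]$. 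The central sub-claim is that for each fixed $j$ and each $c'$ at which the distribution function of $\mu$ is continuous,
\begin{equation*}
\frac{1}{|B_j|}\sum_{k \in B_j} \indic[\tilde S(\lfloor d \rfloor - k, t) > c'] \cvgp \mu((c', \infty)) \quad \text{as } t \to \infty.
\end{equation*}
Its mean equals $\prob(\tilde S(0, t) > c')$ by spatial stationarity and converges to $\mu((c', \infty))$ since $\tilde S(0, t) \to S(0) \sim \mu$ in distribution; the variance is controlled by the spatial mixing of $U$ under $\tau$ (Theorem \ref{T:local}(i)), which forces covariances between distinct indicators to decay with their spacing, so Chebyshev delivers convergence in probability. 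Summing the sandwich bounds over $j$ yields Riemann sums for $\int_0^{\pi - c} \mu((c + v, \infty)) \, dv = \int (y - c)^+ d\mu(y)$ to within $O(\epsilon)$, and letting $\epsilon \to 0$ finishes the step.

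Combined with the matching expectation limit and nonnegativity of $C^+$, Scheff\'e's lemma promotes convergence in probability to $L^1$ convergence. For almost sure convergence I would combine a Borel--Cantelli argument along integer times (using higher-moment bounds that follow from mixing of all orders of $U$) with an oscillation bound on $C^+(L, \cdot)$ over unit time intervals, where the latter is controlled by the number of particles whose crossing status can flip in that interval; this is finite in expectation by spatial stationarity and Theorem \ref{T:swaps}. The main obstacle will be the sub-claim above: establishing a joint limit in which the averaging window and the time horizon both grow at the same rate. Neither pure spatial ergodicity (fixed $t$, window $\to \infty$) nor the single-particle convergence $\tilde S(x, t) \to S(x)$ (fixed $x$, $t \to \infty$) provides this directly, so the argument will rely on a quantitative form of the mixing of $U$ combined with careful handling of block-boundary effects.
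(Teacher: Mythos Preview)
Your expectation computation in the first paragraph is correct and provides a clean way to identify the target value $\int (y-c)^+ d\mu(y)$. The genuine gap is exactly where you locate it: the sub-claim
\[
\frac{1}{|B_j|}\sum_{k \in B_j} \indic\big[\tilde S(\lfloor d \rfloor - k, t) > c'\big] \;\cvgp\; \mu\big((c',\infty)\big),
\]
with $|B_j|\sim \ep t$ and $t\to\infty$ simultaneously. The mixing in Theorem~\ref{T:local}(i) is purely qualitative --- it gives, for each \emph{fixed} $t$, that the covariances $\rho_t(m)=\operatorname{Cov}\big(\indic[\tilde S(0,t)>c'],\,\indic[\tilde S(m,t)>c']\big)$ tend to $0$ as $m\to\infty$, but with no rate and no uniformity in $t$. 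Since the observable $\tilde S(\cdot,t)$ itself changes with $t$, you cannot interchange the $m\to\infty$ and $t\to\infty$ limits, and Chebyshev on $\frac{1}{|B_j|^2}\sum_{k,k'}\rho_t(k-k')$ does not go through. The same obstruction blocks your Borel--Cantelli step for almost sure convergence, which would require summable (hence quantitative) moment bounds that ``mixing of all orders'' does not supply.

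The paper avoids both issues by two devices you are missing. First, it notes that $n\mapsto C^+(L,|c|^{-1}n)$ is \emph{subadditive} under the law-preserving space--time shift $\tau_{\operatorname{sgn}(c),|c|^{-1}}$, so Kingman's subadditive ergodic theorem gives existence of an a.s.\ and $L^1$ limit in one stroke (Lemma~\ref{P:line-rate-exists}); no Scheff\'e or Borel--Cantelli is needed. Second, to identify the limit it introduces the ``speed has stabilized'' events $A(x,\ep,s)=\{|\tilde S(x,t')-S(x)|<\ep\text{ for all }t'>s\}$ and the truncated counts $C^+_{s,\ep}(L,t)=\sum_x C^+(x,L,t)\indic(A(x,\ep,s))$. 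On $A(x,\ep,s)$ one may replace $\tilde S(x,t)$ by the \emph{$t$-independent} speed $S(x)$ up to an $\ep$ error; the block sums then involve only the stationary ergodic sequence $(S(x))_{x\in\Z}$, and ordinary Birkhoff (together with the elementary averaging Lemma~\ref{L:sequence-avg}) handles them with no joint limit to manage. Taking $s\to\infty$ removes the truncation (Lemma~\ref{L:C-ep-C-1}), and then $\ep\to 0$ closes the sandwich. This decoupling of time-convergence from spatial averaging is the key idea your plan is lacking; if you insert it, your Riemann-sum sandwich becomes rigorous and your separate expectation computation becomes unnecessary.
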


We first show that the limit always exists.

\begin{lemma}
\label{P:line-rate-exists}
For any line $L(t) = ct + d$, there exists a random $C^+(L) \in L^1(\real)$ such that 
\begin{equation*}
\frac{C^+(L, t)}t \to C^+(L) \qquad \as \text{ and in $L^1$}.
\end{equation*}
\end{lemma}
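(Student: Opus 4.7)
The plan is to reduce convergence of $C^+(L, t)/t$ to two ingredients: the a.s.\ single-particle speed convergence from Corollary \ref{C:speed-exist}, and the spatial ergodic theorem applied to the stationary field $S$. Throughout I write $\Delta(x, t) = U(x, t) - x$ and $x_k = \lfloor L(0) \rfloor - k$ for $k \ge 0$, so that
\[
C^+(L, t) = \sum_{k \ge 0} \indic\bigl(\Delta(x_k, t) > ct + \{L(0)\} + k\bigr).
\]

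First I would establish uniform integrability of the family $\{C^+(L, t)/t\}_{t \ge 1}$. By spatial stationarity (Theorem \ref{T:local}(i)) we have $\Delta(x, t) \eqd U(0, t)$ for every $x$, so summing the expectations above telescopes to
\[
\expt C^+(L, t) \le \expt(U(0, t) - ct)^+ + 1 \le \expt|U(0, t)| + |c| t + 1 \le (8/\pi + |c|)\, t + 1,
\]
using $|U(0, t)| \le Q(0, t)$ and Lemma \ref{L:finite-exp-chunks}. Hence $\expt C^+(L, t)/t$ is uniformly bounded, which will promote a.s.\ convergence to $L^1$ convergence at the end.

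For the a.s.\ limit, the heuristic is that $\Delta(x_k, t)/t \to S(x_k)$ a.s., so $\indic(\Delta(x_k, t) > ct + k)$ behaves like $\indic(S(x_k) > c + k/t)$ and $C^+(L, t)/t$ becomes a Riemann-sum approximation to $\int_0^\infty \mu((c + u, \infty))\, du = \int (y - c)^+ d\mu(y)$. To justify this I would sandwich: for fixed $\ep, T > 0$ define
\[
G_{T, \ep}(x) = \indic\bigl(|\Delta(x, s)/s - S(x)| \le \ep \text{ for all } s \ge T\bigr),
\]
so that $(S(x), G_{T, \ep}(x))_{x \in \Z}$ is a stationary and mixing field. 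Restricting the sum for $C^+(L, t)$ to indices $k$ with $G_{T, \ep}(x_k) = 1$ and $t \ge T$ sandwiches each indicator between $\indic(S(x_k) > c \pm \ep + k/t)$, after which the spatial ergodic theorem for $(S, G_{T, \ep})$ applied block-by-block in the variable $k/t$ yields
\[
\liminf_{t \to \infty} \frac{C^+(L, t)}{t} \ge \int_0^\infty \prob\bigl(S(0) > c + \ep + u,\, G_{T, \ep}(0) = 1\bigr)\, du.
\]
Sending $T \to \infty$ makes $G_{T, \ep}(0) = 1$ a.s., and then $\ep \to 0$ identifies the liminf with $\int (y - c)^+ d\mu(y)$. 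The matching limsup is obtained symmetrically, either directly or via a parallel analysis of $C^-(L, t)$ together with the identity $C^+(L, t) - C^-(L, t) = \lfloor L(0) \rfloor - \lfloor L(t) \rfloor$. Once a.s.\ convergence is in hand, $L^1$ convergence follows from the uniform integrability of the first step.

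The main obstacle is that the individual convergences $\Delta(x, t)/t \to S(x)$ are not uniform in $x$, forbidding a direct exchange of $\lim_{t \to \infty}$ with the infinite sum over $k$. The events $G_{T, \ep}$ are introduced precisely to restrict to a positive-density set of ``good'' particles where the speed convergence has stabilized by a uniform deterministic time $T$; the vanishing contribution from the complementary ``bad'' particles is controlled by the mean estimate of the first step combined with letting $T \to \infty$.
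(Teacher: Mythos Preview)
Your proposal takes a different route from the paper and attempts much more than the lemma asks: you are effectively trying to prove Proposition~\ref{P:line-rate} (identifying the limit as $\int (y-c)^+ d\mu(y)$) in one shot. The paper's proof of this lemma is far shorter: it observes that $f_n(U) = C^+(L, n|c|^{-1})$ is subadditive with respect to the measure-preserving space--time shift $\tau = \tau_{\mathrm{sgn}(c),\,|c|^{-1}}$, checks $f_n \in L^1$, and applies Kingman's subadditive ergodic theorem, which yields both a.s.\ and $L^1$ convergence at once. The value of the limit is identified only afterwards, in Lemmas~\ref{L:C-ep-C-1} and~\ref{L:C-ep-C}, and those arguments are allowed to work with a one-sided $\liminf$ bound precisely because existence of the limit is already secured.

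Your argument, as written, has two genuine gaps. First, a uniform bound on $\expt[C^+(L,t)/t]$ is not uniform integrability and does not upgrade a.s.\ convergence to $L^1$ convergence; Kingman gives the $L^1$ statement for free, which is why the paper's route is cleaner. Second, and more substantively, neither of your two options for the $\limsup$ bound works. The identity $C^+(L,t) - C^-(L,t) = \lfloor L(0)\rfloor - \lfloor L(t)\rfloor$ combined with $\liminf$ bounds on both $C^+/t$ and $C^-/t$ does \emph{not} yield an upper bound on $\limsup C^+/t$: if $a_t - b_t \to -c$ and $\liminf a_t \ge A^+$, $\liminf b_t \ge A^-$ with $A^+ - A^- = -c$, nothing prevents $a_t$ and $b_t$ from drifting upward together along a subsequence. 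The ``direct'' option requires bounding the contribution of particles starting far below the line, say $x < d - Mt$; your mean estimate only shows this contribution has expectation $o(t)$, which does not control its almost-sure $\limsup$. The paper handles exactly this issue in Lemma~\ref{L:C-ep-C-1}: it shows $B(L) := \liminf_t B(L,t)/t$ is an a.s.\ constant via spatial ergodicity (using $|B(L,t) - B(L+i,t)| \le 2i$) and then appeals to Lemma~\ref{L:box-implies-diagonals} to conclude that constant is zero. Without an input of that strength, your sketch does not close.
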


To prove this lemma, we introduce a space-time shift $\tau_{a, t}$ on the space of swap functions. Here $a \in \Z$ and $t \in [0, \infty)$. The shift $\tau_{a, t}$ shifts the swap function $U$ by $a$ in space and then looks at the increment starting from time $t$:
$$
\tau_{a, t}U(x, s) = U(x + a, t, s) - a.
$$
\begin{proof}
This will follow immediately from Kingman's subadditive ergodic theorem. We first consider the case $c   \ne 0$. Define $\tau := \tau_{\text{sgn}(c), |c|^{-1}}$. Since $U$ is stationary in both space and time, $\tau U \eqd U$.

\medskip

Now let $f_n(U)= C^+(L, |c|^{-1}n)$. The sequence $f_n$ satisfies a subadditivity relation with respect to the shift $\tau$ given by
$$
f_{n+m}(U) \le f_n(U) + f_m(\tau^nU).
$$
Moreover, $f_n(U) \in L^1$ for all $n$. To see this, observe that if $x \le L(0)$ and $U(x, t) > L(t)$, then either $L(t) < x \le L(0)$, or else $x$ swaps at a time $s \in [0, t]$ at some position in the spatial interval $[L(t) -1, L(0)]$ (if $c < 0$) or $[L(0) -1, L(t)]$ (if $c > 0)$. 

\medskip

The expected number of particles that can make swaps in this region is finite by Theorem \ref{T:swaps}, and the number of particles $x$ with $L(t) < x \le L(0)$ is bounded by $|c|t + 1$. 

\medskip

Therefore by Kingman's subadditive ergodic theorem, the sequence $f_n(U)/n$ has an almost sure and $L^1$ limit $C^+(L) \in L^1(\real)$, and therefore so does $\frac{C^+(L, t)}t$. To modify this in the case when $c=0$, consider the usual time-shift by 1. 
\end{proof}

To find the value of the limit in Lemma \ref{P:line-rate-exists} we introduce a collection of approximations of $C^+(L, t)$. Let $A(x, \ep, s)$ be the event where
$$
\card{\frac{U(x, t') - U(x, 0)}t - S(x)} < \ep \mathforall t' > s.
$$
 For any $s \in [0, \infty)$ and $\ep > 0$, define 
 $$
 C^+_{s, \ep}(L, t) = \sum_{x \in \Z} C^+(x, L, t) \indic(A(x, \ep, s)).
$$
\begin{lemma}
\label{L:C-ep-C-1}
For any $\ep > 0$, we have that
\begin{equation}
\label{E:lim-want}
\lim_{t \to \infty} \frac{C^+(L, t)}t = \lim_{s \to \infty}  \limsup_{t \to \infty} \frac{C^+_{s, \ep}(L, t)}t \qquad \as.
\end{equation}
\end{lemma}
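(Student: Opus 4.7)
The easy direction $\lim_{s\to\infty} \limsup_{t\to\infty} C^+_{s,\ep}(L,t)/t \le \lim_{t\to\infty} C^+(L,t)/t$ is immediate from $C^+_{s,\ep}(L,t) \le C^+(L,t)$ and Lemma \ref{P:line-rate-exists}. For the reverse inequality, set
$$D_{s,\ep}(L,t) = C^+(L,t) - C^+_{s,\ep}(L,t) = \sum_{x \in \Z} C^+(x,L,t) \indic(A(x,\ep,s)^c),$$
the count of upcrossings of $L$ by ``bad'' particles. Since $C^+(L,t)/t \to C^+(L)$ almost surely by Lemma \ref{P:line-rate-exists}, we have the a.s.\ identity
$$\limsup_{t \to \infty} \frac{C^+_{s,\ep}(L,t)}{t} = C^+(L) - \liminf_{t \to \infty} \frac{D_{s,\ep}(L,t)}{t}.$$
The plan is to bound $\expt \liminf_t D_{s,\ep}(L,t)/t$ via Fatou's lemma, show it tends to $0$ as $s \to \infty$, and then conclude by monotone convergence in $s$.

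For the bound on $\expt D_{s,\ep}(L,t)/t$, I would split the sum into a near part over $x \in [d - Kt, d]$ and a far part over $x < d - Kt$, with $K > \pi - c$ fixed. For the near part, spatial stationarity of $U$ gives $\prob(A(x,\ep,s)^c) = p(s,\ep)$ independent of $x$, and Corollary \ref{C:speed-exist} implies $p(s,\ep) \to 0$ as $s \to \infty$; hence the near part contributes at most $(Kt + 1) p(s,\ep)$. For the far part, spatial stationarity combined with $U(x,0) = x$ yields
$$\sum_{x < d - Kt} \prob(C^+(x,L,t) = 1) \le \expt\lf[(U(0,t) - (c+K)t)_+\rg] + O(1).$$
By Theorem \ref{T:bounded-speed}, $S(0) \le \pi < c + K$ almost surely, so $(U(0,t)/t - (c+K))_+ \to 0$ a.s. Moreover, $U(0,t)/t \to S(0)$ in $L^1$: by Theorem \ref{T:time-stat} the integer-valued increments $U(0,k+1) - U(0,k)$ are stationary, and Lemma \ref{L:finite-exp-chunks} gives integrability via $|U(0,k+1) - U(0,k)| \le Q(0,k+1) - Q(0,k)$, so Birkhoff's theorem yields $L^1$ convergence. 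Hence $\limsup_t \expt D_{s,\ep}(L,t)/t \le K p(s,\ep)$.

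Combining the two estimates through Fatou and the key identity,
$$\expt\lf[\limsup_{t \to \infty} \frac{C^+_{s,\ep}(L,t)}{t}\rg] \ge \expt C^+(L) - \liminf_{t \to \infty} \expt \frac{D_{s,\ep}(L,t)}{t} \ge \expt C^+(L) - K p(s,\ep).$$
Since $\limsup_t C^+_{s,\ep}(L,t)/t$ is monotone non-decreasing in $s$ (because $A(x,\ep,s)$ is) and bounded above a.s.\ by $C^+(L)$, monotone convergence gives $\lim_s \expt \limsup_t C^+_{s,\ep}(L,t)/t = \expt C^+(L)$, which together with the pointwise bound forces $\lim_s \limsup_t C^+_{s,\ep}(L,t)/t = C^+(L)$ almost surely.

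The main obstacle is the far-particle bound: since $|S(x)| \le \pi$ is only an asymptotic statement, distant particles can in principle make transient excursions across the line at finite times, preventing us from deterministically localizing the $x$'s contributing to $C^+(L,t)$. Handling this requires the $L^1$ convergence $U(0,t)/t \to S(0)$, which in turn combines time-stationarity of the increments (Theorem \ref{T:time-stat}) with the integrability bound from Lemma \ref{L:finite-exp-chunks}.
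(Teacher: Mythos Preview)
Your argument is correct, and the overall structure---split the bad particles into a near window $[d-Kt,d]$ and a far tail, control each separately, and use the already-established existence of $\lim_t C^+(L,t)/t$ to pass between $\liminf$ and $\limsup$---matches the paper. The difference lies in how each piece is controlled.

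For the near part, the paper applies Birkhoff's theorem in space to get the almost-sure limit $|2\pi-c|\,\prob(A(0,\ep,s)^c)$ directly, whereas you only need the expectation bound $Kp(s,\ep)$, since you will pass through Fatou and monotone convergence at the end anyway. For the far part, the arguments genuinely diverge: the paper shows that $B(L)=\liminf_t \frac1t\sum_{x<d-(2\pi-c)t}C^+(x,L,t)$ is invariant under the spatial shift, hence almost surely constant, and then identifies that constant as $0$ using only Lemma~\ref{L:box-implies-diagonals} (the statement that $\prob(\exists\, x<0:U(x,t)>2\pi t)$ does not tend to $1$). You instead bound the far tail in expectation by $\expt\bigl[(U(0,t)/t-(c+K))_+\bigr]$ and invoke $L^1$ convergence of $U(0,t)/t$ to $S(0)$ together with the full strength of Theorem~\ref{T:bounded-speed} ($S(0)\le\pi<c+K$) to send this to zero; the almost-sure conclusion is then recovered from the expectation sandwich via monotonicity of $s\mapsto\limsup_t C^+_{s,\ep}/t$ and monotone convergence. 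Your route is arguably more direct and avoids the shift-invariance trick, at the cost of using the stronger input $\supp(\mu)\subset[-\pi,\pi]$ rather than just the liminf statement of Lemma~\ref{L:box-implies-diagonals}; since both are already available in Section~\ref{S:bounded-speed}, this is a matter of taste rather than a gap.
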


\begin{proof} We show that for any $\ep > 0$,
\begin{equation}
\label{E:C-ep-C}
\lim_{s \to \infty}  \liminf_{t \to \infty} \frac{C^+(L, t) - C^+_{s, \ep}(L, t)}t = 0\qquad \as.
\end{equation}
We have 
\begin{align}
\nonumber
0 &\le \frac{C^+(L, t)}t -\frac{C^+_{s, \ep}(L, t)}t \\
\nonumber
&\le \frac{1}t\sum_{x = d- (2\pi-c)t}^{d} C^+(x, L, t) \indic(A(x, \ep, s)^c) + \frac{1}t\sum_{x < d - (2\pi-c)t} C^+(x, L, t) \\
\label{E:two-terms-1} &\le \frac{1}{t}\sum_{x = d- (2\pi -c)t}^{d}\indic(A(x, \ep, s)^c) +\frac{1}t\sum_{x < d - (2\pi-c)t} C^+(x, L, t).
\end{align}
Define 
$$
B(L, t) =  \sum_{x < d - (2\pi-c)t} C^+(x, L, t), \qquad \text{ and let } \qquad B(L) = \liminf_{t \to \infty} \frac{B(L, t)}t.
$$
Birkhoff's ergodic theorem implies that as $t$ approaches $\infty$, the first term in \eqref{E:two-terms-1} approaches $
|2\pi - c|\prob(A(x, \ep, s)^c).
$
Therefore
\begin{align*}
0 \le \liminf_{t \to \infty} \frac{C^+(L, t) - C^+_{s, \ep}(L, t)}t \le |2\pi - c|\prob(A(x, \ep, s)^c) + B(L).
\end{align*}
We have that
$
\prob(A(x, \ep, s)^c) \to 0
$
as $s \to \infty$. Therefore to prove \eqref{E:C-ep-C}, it is enough to show that $B(L) = 0$ almost surely. We first show that it is almost surely constant. 

\medskip

Letting $[L + i](t) = ct + d + i$, we claim that $|B(L, t) - B(L + i, t)| \le 2i.$ 
To see this when $i > 0$, first observe that the only particles that can upcross $L + i$ but not $L$ in the interval $[0, t]$ are those that start between $L(0)$ and $[L +i](0)$. There are at most $i$ such particles. Similarly, the only particles that can upcross $L$ but not $L+i$ in the interval $[0, t]$ are those that are between $L(t)$ and $[L+i](t)$ at time $t$. Again, there are at most $i$ such particles. This proves the desired bound. Similar reasoning works when $i < 0$.

\medskip

Therefore the limit $B(L + i)$ is the same for all $i$, and so the random variable $B(L)$ lies in the invariant $\sig$-algebra of the spatial shift. By spatial ergodicity, $B(L)$ is almost surely constant. We have that
\begin{align}
\nonumber \prob \lf( \frac{B(L, t)}t > 0 \rg) &= \prob \big(\text{There exists $x < d -(2\pi -c)t$  such that $U(x, t) > d + ct$} \big) \\
\label{E:prob-B-+}
&=\prob \lf(\text{There exists $x < 0$  such that $U(x, t) > 2\pi t$} \rg),
\end{align}
where the second equality follows by spatial stationarity. By Lemma \ref{L:box-implies-diagonals}, \eqref{E:prob-B-+} does not approach $1$ as $t \to \infty$, and thus $B(L) = 0$ almost surely. This proves \eqref{E:C-ep-C}.

\medskip

The almost sure existence of the limit $\lim_{t \to \infty} C^+(L, t)/t$ by Lemma \ref{P:line-rate-exists} allows us to rearrange \eqref{E:C-ep-C} to get \eqref{E:lim-want}. 
\end{proof}

We now establish bounds on the limits of $C^+_{s, \ep}$. For this we need the following lemma about sequences. The proof is straightforward, so we omit it.

\begin{lemma}
\label{L:sequence-avg}
Let $(a_n : n \in \nat)$ be a sequence such that
$$
\lim_{n \to \infty} \frac{1}{n + 1} \sum_{i=0}^n a_i = a.
$$
Then for any sequence $j(m) \in \Z_+$ such that $j(m)/m \to k > 0$, and any $c > 0$, we have that
$$
\lim_{m \to \infty} \frac{1}{m + 1} \sum_{i = j(m)}^{j(m) + cm} a_i = ca.
$$
\end{lemma}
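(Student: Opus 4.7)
The plan is to reduce the claim directly to the Cesàro hypothesis by writing the target sum as a difference of cumulative partial sums. Set $S_n = \sum_{i=0}^n a_i$, so the hypothesis reads $S_n/(n+1) \to a$. For $m$ large enough that $j(m) \ge 1$ (which holds eventually since $k > 0$), the sum of interest satisfies
$$
\sum_{i = j(m)}^{j(m) + \floor{cm}} a_i \;=\; S_{j(m) + \floor{cm}} \;-\; S_{j(m) - 1}.
$$

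Dividing by $m+1$ and inserting the natural Cesàro denominators yields the algebraic identity
$$
\frac{S_{j(m) + \floor{cm}} - S_{j(m) - 1}}{m + 1} \;=\; \frac{j(m) + \floor{cm} + 1}{m+1} \cdot \frac{S_{j(m) + \floor{cm}}}{j(m) + \floor{cm} + 1} \;-\; \frac{j(m)}{m+1} \cdot \frac{S_{j(m) - 1}}{j(m)}.
$$
Using $j(m)/m \to k$, the two scalar prefactors converge to $k + c$ and $k$ respectively. Since $k > 0$, both index sequences $j(m) + \floor{cm}$ and $j(m)$ tend to infinity, so both Cesàro ratios on the right converge to $a$ by hypothesis. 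The limit is therefore $(k+c)a - k a = c a$, as required.

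This is a routine algebraic manipulation, and no serious obstacle arises. The only step worth flagging is that the assumption $k > 0$ is used essentially to ensure $j(m) \to \infty$, which in turn is what forces $S_{j(m)-1}/j(m) \to a$; without this hypothesis the second Cesàro limit could fail and the conclusion would break down.
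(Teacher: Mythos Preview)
Your proof is correct; this is the standard reduction to the Ces\`aro hypothesis via a telescoping difference of partial sums, and the use of $k>0$ to force $j(m)\to\infty$ is exactly the point that needs to be noted. The paper itself omits the proof as straightforward, so there is nothing to compare against.
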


\begin{lemma}
\label{L:C-ep-C}
For any line $L(t) = ct + d$ and any $\ep > 0$, we have that
\begin{align}
\label{E:limsup}
\int_{c + \ep}^\infty \mu(y, \infty) dy \le
\lim_{s \to \infty} \limsup_{t \to \infty} \frac{C^+_{s, \ep}(L, t)}t& \le \int_{c - \ep}^\infty \mu(y, \infty)dy.\end{align}
%
\end{lemma}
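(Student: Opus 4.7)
The plan is to approximate the indicator $C^+(x,L,t)\indic(A(x,\ep,s))$ by conditions depending only on $S(x)$ and $x$, and then convert spatial sums into integrals against the local speed distribution $\mu$ via Birkhoff's ergodic theorem. On the event $A(x,\ep,s)$ with $t > s$, the double inequality $x + (S(x)-\ep)t < U(x,t) < x + (S(x)+\ep)t$ holds. For $x \le d$ to satisfy $U(x,t) > L(t) = ct + d$, this gives a sufficient condition
$$
\{ S(x) > c + \ep \} \cap \{ x > d - (S(x) - c - \ep)t \},
$$
and a necessary condition
$$
\{ S(x) > c - \ep \} \cap \{ x > d - (S(x) + \ep - c)t \}.
$$
These are precisely the conditions whose spatial densities integrate to the two sides of the desired inequality.

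First I would discretize the support of $\mu$, which by Theorem \ref{T:main-local} lies inside $[-\pi, \pi]$. Fixing a partition $-\pi = \sigma_0 < \sigma_1 < \cdots < \sigma_N = \pi$ of mesh $\de$, the necessary condition produces
$$
C^+_{s,\ep}(L, t) \le \sum_{k :\, \sigma_{k+1} > c-\ep} \card{ \lf\{ x \in (d - (\sigma_{k+1} + \ep - c)t,\, d] : S(x) \in [\sigma_k, \sigma_{k+1}) \rg\} },
$$
while the sufficient condition gives
$$
C^+_{s,\ep}(L, t) \ge \sum_{k :\, \sigma_k > c+\ep} \card{ \lf\{ x \in (d - (\sigma_k - \ep - c)t,\, d] : S(x) \in [\sigma_k, \sigma_{k+1}),\, A(x, \ep, s) \rg\} }.
$$
I would then apply Birkhoff's ergodic theorem to the spatial shift $\tau$ (ergodic by the mixing property in Theorem \ref{T:main-local}), jointly to the stationary sequences $\indic(S(x) \in [\sigma_k, \sigma_{k+1}))$ and $\indic(S(x) \in [\sigma_k, \sigma_{k+1}),\, A(x, \ep, s))$. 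Since the summation windows are of the form $(d - \alpha t, d]$ rather than $[0, n]$, I would invoke Lemma \ref{L:sequence-avg} to pass from ordinary averages to averages over proportionally growing windows. For each $k$ this yields that the $k$th term of the lower bound, divided by $t$, converges almost surely to $(\sigma_k - \ep - c)\,\prob(S(0) \in [\sigma_k, \sigma_{k+1}),\, A(0, \ep, s))$, and analogously for the upper bound.

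Summing over the finitely many $k$ and then sending the mesh $\de \to 0$ converts the Riemann sums into integrals:
$$
\liminf_{t \to \infty} \frac{C^+_{s,\ep}(L, t)}{t} \ge \int (\sigma - c - \ep)^+ \indic_{A(0, \ep, s)}\, d\prob, \quad \limsup_{t \to \infty} \frac{C^+_{s,\ep}(L, t)}{t} \le \int (\sigma + \ep - c)^+\, d\mu(\sigma).
$$
Letting $s \to \infty$, the events $A(0, \ep, s)$ increase to the event that the limit $S(0)$ exists, which has probability $1$ by Corollary \ref{C:speed-exist}, so the lower bound becomes $\int (\sigma - c - \ep)^+ d\mu(\sigma) = \int_{c+\ep}^\infty \mu(y, \infty)\, dy$. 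The upper bound is already $s$-free and equals $\int_{c-\ep}^\infty \mu(y, \infty)\, dy$.

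The main technical obstacle I anticipate is keeping the order of limits correct: Birkhoff gives almost sure convergence for each fixed window, but we need to sum $N = N(\de)$ such windows before letting $t \to \infty$ and only then let $\de \to 0$. This is harmless because for fixed $\de$ there are only finitely many windows. The bounded support of $\mu$ guaranteed by Theorem \ref{T:main-local} is essential here: without it, truncating the speed range at $\pm\pi$ would leave an uncontrolled tail, whereas with bounded support the integrals $\int (\sigma - c \mp \ep)^+ d\mu(\sigma)$ are over a compact domain and the standard Riemann-sum approximation suffices.
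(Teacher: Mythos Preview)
Your proposal is correct and tracks the paper's proof closely: both linearize $U(x,t)$ on the event $A(x,\ep,s)$ to replace $C^+(x,L,t)$ by a condition on $(x,S(x))$, then discretize and invoke the spatial ergodic theorem to obtain a Riemann sum for $\int (\sigma - c \mp \ep)^+ d\mu(\sigma)$.

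The one genuine difference is the direction of discretization. The paper partitions the \emph{spatial} range $x \in [-(\pi - c + \ep)t,0)$ into $m$ blocks of size $\ceil{t/m}$ and freezes the threshold $-x/t$ on each block; because these blocks start at positions $z\ceil{t/m}$ that drift with $t$, the paper genuinely needs Lemma~\ref{L:sequence-avg} to convert ordinary ergodic averages into averages over moving windows. You instead partition the \emph{speed} range $[-\pi,\pi]$ into bins $[\sigma_k,\sigma_{k+1})$, so every spatial window you sum over has the fixed right endpoint $d$. That means plain one-sided Birkhoff averages already converge, and your citation of Lemma~\ref{L:sequence-avg} is unnecessary (though harmless). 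Your route is thus marginally more direct; the paper's route has the mild advantage of not needing to introduce the bins $\sigma_k$ as auxiliary objects. Either way the bounded support of $\mu$ from Theorem~\ref{T:bounded-speed} is what makes the Riemann-sum step go through, and you identify this correctly.
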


\begin{proof}
We will prove this when $d = 0$. The result follows for all other $d$ by the spatial stationarity of $U$. 
We can write $C^+_{s, \ep}(L, t)$ as follows. 
\begin{align*}
C^+_{s, \ep}(L, t) &= \sum_{ x < 0}  \indic(U(x, t) - x > ct - x)\indic(A(x, \ep, s)) \\
&=    \sum_{ x < 0}  \indic\left(\frac{U(x, t) - x}t  - c > -\frac{x}t \right)\indic(A(x, \ep, s)). 
\end{align*}
On the event $A(x, \ep, s)$, for $t > s$, $\frac{U(x, t) - x}t \in (S(x) - \ep, S(x) + \ep)$. This gives the following two almost sure bounds on $C^+_{s, \ep}(L, t).$
\begin{align*}
C^+_{s, \ep}(L, t) &\le \sum_{x < 0}  \indic\left(S(x) - (c - \ep) > -\frac{x}t \right)\indic(A(x, \ep, s)) \\
&\le \sum_{x \in [-(\pi - c + \ep)t, 0)}  \indic\left(S(x) - (c - \ep) > -\frac{x}t \right)\qquad \mathand \\
C^+_{s, \ep}(L, t) &\ge \sum_{x \in [-(\pi - c - \ep)t, 0)}  \indic\left(S(x) - (c + \ep) > -\frac{x}t \right)\indic(A(x, \ep, s)).
\end{align*}
Here the change in the range of $x$-values follows since $S(x) \in [-\pi, \pi]$ almost surely for every $x$ by Theorem \ref{T:bounded-speed}. 
We now prove the upper bound in \eqref{E:limsup}.
For any $m \in \Z$, we have the following:
\begin{align*}
&\sum_{x \in [-(\pi - c + \ep)t, 0)}  \indic\left(S(x) - (c - \ep) > -\frac{x}t \right) \\
&\qquad \qquad \le \sum_{z=0}^{\lceil (\pi -c + \ep)m \rceil} \sum_{x = 0}^{\ceil{t/m} - 1} \indic\left(S\big(-x - z\ceil{t/m}\big) > \frac{z}m + c - \ep \right).
\end{align*}
Applying Birkhoff's ergodic theorem and Lemma \ref{L:sequence-avg} implies that for each $z$, almost surely as $t \to \infty$, we have
\begin{align*}
\frac{1}{t} \sum_{x = 0}^{\ceil{t/m} -1} \indic\left(S\big(-x - z\ceil{t/m}\big) > \frac{z}m + c - \ep\right) &\to \frac{1}m\mu \lf(\frac{z}{m} + c - \ep, \infty \rg).
\end{align*}
Summing over $z$, we get that 
$$
\limsup_{t \to \infty} \frac{C^+_{s, \ep}(L, t)}t \le \frac{1}{m} \sum_{z=0}^{\ceil{(\pi + \ep - c)m}}\mu \lf(\frac{z}{m} + c - \ep, \infty \rg) \qquad \as \text{ for every $m$}.
$$
Taking $m \to \infty$, the above Riemann sum converges to the corresponding integral, proving the upper bound in \eqref{E:limsup}. To prove the lower bound in \eqref{E:limsup}, first observe that for any $m \in \Z_+$, we have
\begin{align*}
\sum_{x \in [-(\pi - c - \ep)t, 0)}  &\indic\left(S(x) - (c + \ep) > -\frac{x}t \rg)\indic(A(x, \ep, s)) \\
&\ge \sum_{z=0}^{ \floor{(\pi - c - \ep)m} - 1} \sum_{x = 0}^{\floor{t/m} - 1} \indic\bigg(S(x + z\floor{t/m}) - (c + \ep) > \frac{z+1}{m}\mathand \\
&\qquad \qquad |S(x + z\floor{t/m}, t') - S(x + z\floor{t/m})| < \ep \mathforall t' > s\bigg).
\end{align*}
In the above inequality, we have used the notation $S(x, t)$ for the average speed of particle $x$ up to time $t$ (see the definition in Equation \eqref{E:St}).
From here we take $t \to \infty$, and apply Lemma \ref{L:sequence-avg} as in the proof of the upper bound in \eqref{E:limsup}. This gives the almost sure bound
\begin{equation}
\label{E:almost-reimann-ready}
\limsup_{t \to \infty} \frac{C^+_{s, \ep}(L, t)}t \ge \frac{1}{m} \sum_{z=0}^{\floor{(\pi - \ep - c)m} -1}\prob \lf(S(0) - (c + \ep) \ge \frac{z + 1}{m} \mathand |S(0, t') - S(0)| < \ep \mathforall t' > s\rg).
\end{equation}
Now observe that as $s \to \infty$, 
$$ 
\prob \lf(S(0) - (c + \ep) \ge \frac{z + 1}{m} \mathand |S(0, t') - S(0)| < \ep \mathforall t' > s\rg) \to \mu \lf(\frac{z + 1}{m} + c + \ep , \infty\rg).
$$
Therefore taking $s \to \infty$ in \eqref{E:almost-reimann-ready}, and then letting $m$ tend to infinity  proves the lower bound in \eqref{E:limsup}.
\end{proof}

\begin{proof}[Proof of Proposition \ref{P:line-rate}.]
Applying Lemmas \ref{L:C-ep-C-1} and \ref{L:C-ep-C} gives that for any $\ep > 0$, that
$$
\int_{c + \ep}^\infty \mu(y, \infty)dy \le \lim_{t \to \infty} \frac{C^+(L, t)}t \le \int_{c + \ep}^\infty \mu(y, \infty)dy
$$
almost surely.
Taking $\ep$ to $0$ then completes the proof of almost sure convergence.
The fact that convergence also takes place in $L^1$ follows from Lemma \ref{P:line-rate-exists}.
\end{proof}

We can analogously define $C^-(L, t)$ as the number of net downcrossings of the line $L$ by the time $t$, and define $C(L, t) = C^+(L, t) + C^-(L, t)$. By the symmetry of the local limit $U$, analogues of Proposition \ref{P:line-rate} hold for $C^-(L, t)$ and $C(L, t)$.


\begin{theorem}
\label{T:line-rate}
Let $L(t) = ct + d$. Then as $t \to \infty$, we have that
$$
\frac{C^+(L, t)}{t} \to \int(y-c)^+d\mu(y), \;\;\;\;  \frac{C^-(L, t)}{t} \to  \int(y-c)^-d\mu(y), \; \;\;\; \frac{C(L, t)}{t} \to \int|y-c|d\mu(y).
$$
All three convergences are both almost sure and in $L^1$.
\end{theorem}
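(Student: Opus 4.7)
The first convergence is exactly Proposition \ref{P:line-rate}, so there is nothing new to prove for that piece. My plan is to deduce the $C^-$ statement from the $C^+$ statement via the spatial reflection symmetry $U(\cdot,\cdot) \eqd -U(-\,\cdot, \cdot)$ from Theorem \ref{T:local}(iii), and then to obtain the $C$ statement by adding.

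For the second convergence, consider the reflected swap process $V(y,s) := -U(-y, s)$ and the reflected line $\tilde L(s) = -cs - d$ (slope $-c$, intercept $-d$). Since $V(y, 0) = -U(-y, 0) = y$, the map $x \mapsto -x$ sets up a bijection between downcrossings of $L$ by $U$ and upcrossings of $\tilde L$ by $V$: the conditions $x \ge L(0)$ and $U(x, t) < L(t)$ are equivalent to $-x \le \tilde L(0)$ and $V(-x, t) > \tilde L(t)$. Hence $C^-(L, t) \eqd C^+(\tilde L, t)$, using $V \eqd U$. Proposition \ref{P:line-rate} applied to $\tilde L$ yields
$$
\frac{C^+(\tilde L, t)}{t} \to \int (y + c)^+\, d\mu(y),
$$
and the substitution $z = -y$ combined with the symmetry of $\mu$ (Theorem \ref{T:main-local}) rewrites the right side as $\int (z-c)^-\, d\mu(z)$. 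So far this is only convergence in distribution of $C^-(L, t)/t$ to the correct constant. To upgrade to almost sure and $L^1$ convergence, I plan to rerun the Kingman subadditive ergodic argument of Lemma \ref{P:line-rate-exists} verbatim, but with $f_n(U) := C^-(L, |c|^{-1} n)$: the subadditivity $f_{n+m}(U) \le f_n(U) + f_m(\tau^n U)$ is identical, the time-space shift $\tau$ is unchanged, and the $L^1$ control via Theorem \ref{T:swaps} (bounding particles that can swap into the relevant strip) carries over without change. This produces some almost sure and $L^1$ limit for $C^-(L, t)/t$, which must agree with the distributional limit identified above.

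The third convergence is then immediate: every crossing of $L$ is either an upcrossing or a downcrossing, so $C(L, t) = C^+(L, t) + C^-(L, t)$, and pointwise $(y-c)^+ + (y-c)^- = |y - c|$. Adding the first two convergences, both almost surely and in $L^1$, gives the claim. The only delicate point in the plan is the reflection bookkeeping for the $C^-$ statement, in particular confirming that the equality boundary in the defining inequalities of up- versus downcrossings does not cause double counting or loss; this is a minor accounting issue, since any discrepancy is an $O(1)$ boundary contribution that washes out after dividing by $t$. Everything else is a routine assembly of Proposition \ref{P:line-rate}, the symmetry of $\mu$, and Kingman's subadditive ergodic theorem.
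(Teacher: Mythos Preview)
Your approach is correct and matches the paper's (one-sentence) proof, which simply invokes the symmetry of $U$ to pass from $C^+$ to $C^-$ and then adds.

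One small simplification: the Kingman rerun is unnecessary. Your reflection gives a \emph{pathwise} equality $C^-_U(L,t) = C^+_V(\tilde L, t)$ between functionals of the same underlying $U$, and since $V \eqd U$ as swap processes, the entire process $(C^-_U(L,t))_{t \ge 0}$ has the same law as $(C^+_U(\tilde L, t))_{t \ge 0}$, not just the one-dimensional marginals. Almost sure convergence to a deterministic constant, and $L^1$ convergence, are properties of the law of the process, so Proposition~\ref{P:line-rate} applied to $\tilde L$ transfers directly to $C^-(L,t)/t$ without a second appeal to Kingman. Your route via Kingman is of course still valid, just slightly longer than needed.
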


\begin{proof}[Proof of Theorem \ref{T:swap-rate}.]
By Theorem \ref{T:time-stat}, the process of swap times for the particle $x$ is stationary in time. Moreover, $Q(x, 1) \in L^1$ by Lemma \ref{L:finite-exp-chunks}. Therefore we can apply Birkhoff's ergodic theorem to get that $Q(x, t)/t$ converges both almost surely and in $L^1$ to a (possibly random) limit. We now identify that limit.

\medskip

Let $L_q(t) = qt + x$. By Theorem \ref{T:line-rate}, we have that with probability $1$, 
\begin{equation}
\label{E:C-L-q}
\frac{C(L_q, t)}{t} \to \int |y - q|d\mu(y) \qquad \text{for every $q \in \rat$}.
\end{equation}
 At time $t$, there are fewer than $|U(x, t) - L_q(t)| + 1$ particles that either have crossed the line $L_q(t)$ by time $t$ but have not swapped with particle $x$, or have swapped with particle $x$ by time $t$ but have not crossed the line $L_q(t)$. Therefore we have that almost surely,
\begin{equation}
\label{E:t-finite}
\card{\frac{Q(x,t) - C(L_q, t)}t} \le \frac{|U(x, t) - L_q(t)| + 1}t = |S(x) - q| + o(1).
\end{equation}
The last equality follows from Theorem \ref{T:main-local}. Letting $t \to \infty$ in \eqref{E:t-finite}, the convergence in \eqref{E:C-L-q} implies that almost surely,
\begin{equation*}
\lf| \lim_{t \to \infty} \frac{Q(x, t)}t - \int |y - q|d\mu(y) \rg| \le |S(x) - q| \qquad \text{for every $q \in \rat$}. 
\end{equation*}
By the continuity of the function $F(z) = \int |y - z|d\mu(y)$, this implies that 
\[
\lim_{t \to \infty} \frac{Q(x, t)}t = \int |y - S(x)|d\mu(y) \qquad \as. \qquad \qedhere
\]

\end{proof}

\begin{proof}[Proof of Corollary \ref{C:av-swap-rate}.]
 For (i), observe that for $c \in [-\pi, \pi]$,
 we have that
 \begin{align*}
 2\int |y - c| d\mu(y) &=  \int \big( |y - c| + |y + c| \big) d\mu(y) \\
 &\le \int\big( |y - \pi| + |y + \pi| \big) d\mu(y) = \int 2\pi d\mu(y) = 2\pi.
 \end{align*}
 Here the first equality follows from the symmetry of $\mu$, and the second equality follows since $\supp(\mu) \sset [-\pi, \pi]$.
Therefore by Theorem \ref{T:bounded-speed} and Theorem \ref{T:swap-rate},
$$
0 \le \lim_{t \to \infty} \frac{Q(x, t)}t \le \pi \qquad \as.
$$
For (ii), Birkhoff's ergodic theorem implies that
\begin{equation*}
\expt\lf[\lim_{t \to \infty} \frac{Q(x, t)}{t}\rg] = \expt Q(x, 1).
\end{equation*}
Here the left hand side above is equal to $\expt|X - X'|$ by Theorem \ref{T:swap-rate}, where $X$ and $X'$ are independent random variables with distribution $\mu$. By Lemma \ref{L:finite-exp-chunks}, the right hand side is equal to $8/\pi$.
\end{proof}

\section{Limiting trajectories are Lipschitz}
\label{S:Lipschitz}

Recall that a path $y(t)$ is $\pi\sqrt{1 - y^2}$-Lipschitz if it is absolutely continuous, and if $|y'(t)| \le \sqrt{1 - y^2}$ for almost every time $t$. The goal of this section is to prove Theorem \ref{T:main}, showing that weak limits of the trajectory random variables $Y_n$ are supported on $\pi\sqrt{1 - y^2}$-Lipschitz paths. 

\medskip

Theorem \ref{T:bounded-speed} allows us to conclude that most particles move with bounded local speed most of the time. In order to translate this into a global speed bound we need to bound the amount of particle movement during the times that particles are not moving with bounded speed.
For $p, q \in [0, 1]$, and a path $y: [0, 1] \to [-1, 1]$, define 
$$
m_{p ,q}(y) = \inf \{ |y(t)| : t \in [p, q]\}.
$$

\begin{lemma}
\label{L:bad-box}
For any $\ep > 0$ and $q \in (0, 1]$, the following holds:
$$
\frac{1}n \expt \sum_{x=1}^n \lf(|\sig^n_G(x, q) - \sig^n_G(x, 0)| - (\pi + \ep)q\sqrt{1 - m^2_{0, q}(\sig^n_G(x, \cdot))} \rg)^+ \to 0 \qquad \mathas n \to \infty.
$$
\end{lemma}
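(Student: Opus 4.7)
The plan is to partition $[0, q]$ into $K$ small chunks, bound the per-chunk displacement of each particle via the local limit, and sum. Let $r = q/K$, $t_i = ir$, and $\alpha_i^x := \sig_G^n(x, t_{i-1})$. Since $|\alpha_i^x| \ge m_{0, q}(\sig_G^n(x, \cdot))$ for every $i$, we have $\sum_{i=1}^K \sqrt{1-(\alpha_i^x)^2} \le K\sqrt{1-m_{0, q}^2(\sig_G^n(x, \cdot))}$, and combining this with the triangle inequality yields the pointwise bound
\begin{equation*}
\lf(|\sig_G^n(x, q) - \sig_G^n(x, 0)| - (\pi+\ep)q\sqrt{1-m_{0,q}^2(\sig_G^n(x, \cdot))}\rg)^+ \le \sum_{i=1}^K \lf(|\sig_G^n(x, t_i) - \sig_G^n(x, t_{i-1})| - (\pi+\ep)r\sqrt{1-(\alpha_i^x)^2}\rg)^+.
\end{equation*}
Hence it is enough to prove the analogous decay for each chunk separately; the argument for chunk $i$ is the same as for $i=1$ using the shifted-time local limit in Theorem \ref{T:local}, so I focus on $[0, r]$.

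For the first chunk, the global displacement $|\sig_G^n(x, r) - \alpha_x|$, with $\alpha_x := \sig_G^n(x, 0)$, is bounded above by $2/n$ times the number of swaps made by particle $x$ over the first $rN$ discrete steps. In the local scaling around particle $x$, this corresponds to the tagged-particle swap count over local time $s_n := r(n-1)\sqrt{1-\alpha_x^2}/2$. Corollary \ref{C:av-swap-rate}(i) tells us that in the local limit, $\lim_{t\to\infty} Q(0, t)/t \in [0, \pi]$ almost surely, so heuristically the pre-limit swap count is $\le (\pi+\ep/2) s_n$ with high probability. This translates back to $|\sig_G^n(x, r) - \alpha_x| \le (\pi+\ep/2) r \sqrt{1-\alpha_x^2}$ with high probability. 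Averaging over particles, whose starting positions are asymptotically uniform on $[-1, 1]$ by Lemma \ref{L:precompact}, and using bounded convergence (the positive part being bounded by $2$) then yields the per-chunk bound.

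The main obstacle is that $s_n \to \infty$, whereas Theorem \ref{T:local} provides Skorokhod convergence only on compact local time intervals, so the almost-sure bound on $\lim Q(0, t)/t$ cannot be substituted in directly at time $s_n$. To bridge the two scales, I would further subdivide each chunk into $M$ sub-chunks of local length $T = s_n/M$. The shifted-time local limit (Theorem \ref{T:local}) combined with stationarity of increments in $U$ (Theorem \ref{T:local}(ii) and Theorem \ref{T:time-stat}) ensures that each sub-chunk's swap-count increment converges in distribution to $Q(0, T)$ as $n \to \infty$, and the $L^1$ convergence in Theorem \ref{T:swap-rate} together with Corollary \ref{C:av-swap-rate}(i) gives $\expt[Q(0, T)/T - (\pi+\ep/2)]^+ \to 0$ as $T \to \infty$. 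Summing the excess-swap contribution over the $M$ sub-chunks produces the required per-chunk bound in expectation. The delicate point is balancing $T = T(n)$ against $n$ via a diagonal argument so that the per-sub-chunk pre-limit convergence error and the cumulative $L^1$ excess error from the sub-chunk sum both vanish.
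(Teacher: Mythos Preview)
Your high-level strategy --- partition $[0,q]$ into pieces of global width $O(1/n)$, reduce to one piece by time-stationarity, and compare the per-piece swap count to the local limit --- is exactly the paper's. The outer partition into $K$ fixed chunks is harmless but redundant; the sub-chunk layer already does all the work, and the paper goes directly to chunks of width $\Delta_n = 2(t+1)/(n-1)$.

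The genuine gap is the step ``each sub-chunk's swap-count increment converges in distribution to $Q(0,T)$, so summing the excess-swap contribution gives the bound.'' Weak convergence of $Q_n^{(j)}$ to $Q(0,T)$ yields only $\liminf_n \expt[Q_n^{(j)} - c]^+ \ge \expt[Q(0,T)-c]^+$ via Fatou; the upper bound you need requires uniform integrability of the pre-limit swap counts, and no moment bound on $Q_n^{(j)}$ for a single particle is available from the stated results. A diagonal choice of $T(n)$ cannot repair this, since the issue is already present at fixed $T$. The paper's fix is a complement trick: write the excess as $Q_n\mathbb{1}(Q_n \ge c) = Q_n - Q_n\mathbb{1}(Q_n < c)$ and observe that (i) $\tfrac{2}{n}\sum_{x=1}^n Q_n(x,0,\Delta_n)$ is \emph{deterministic}, equal to $4(t+1)$ since every swap is counted twice, and (ii) the truncated variable $Z^n_{\alpha,t} := Q_n\mathbb{1}(Q_n < c)$ is \emph{bounded}, so bounded convergence applies and gives $\int_{-1}^1 \expt Z^n_{\alpha,t}\,d\alpha \to \int_{-1}^1 \expt Z(t\sqrt{1-\alpha^2})\,d\alpha$, which Corollary~\ref{C:av-swap-rate} shows is $4t + o(t)$. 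Thus the truncated part exhausts the deterministic total and the excess is $o(t)$. This is the missing idea in your proposal. A secondary wrinkle: your sub-chunks have fixed \emph{local} length $T$, hence $\alpha$-dependent global width, which obstructs the time-stationarity reduction (Theorem~\ref{T:basic-time-stat} shifts all particles by the same global time); the paper avoids this by fixing the global width.
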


\begin{proof}
We first reduce the lemma to a statement about the number of swaps made by fast-moving particles. Fix $t \in [0, \infty)$. For $j \in \{0, 1, \ddd, \floor{(n-1)q/2t} \}$, define 
$$
\De_n =  \frac{2(t + 1)}{n-1}, \qquad  t_{n, j} = \frac{\floor{ntj}}{{n \choose 2}} \qquad \mathand \qquad t^+_{n, j} = t_{n, j} + \De_n.
$$
This intervals $[t_{n, j}, t^+_{n, j}]$ cover the interval $[0, q]$ with some overlap. The reason for adding in the overlap is so that all intervals are the same length and start at multiples of ${n \choose 2}^{-1}$. This is necessary for applying time stationarity of sorting networks. Let $Q_n(x, t_1, t_2)$ be the number of swaps made by particle $x$ in the interval $[t_1, t_2]$ in the random sorting network $\sig^n$. Then for large enough $n$, we have that
\begin{align}
\nonumber
&\frac{1}n \expt  \sum_{x=1}^n \lf(|\sig^n_G(x, q) - \sig^n_G(x, 0)| - (\pi + \ep)q\sqrt{1 - m^2_{0, q}(\sig^n_G(x, \cdot))} \rg)^+ \\
\nonumber
&\qquad \le \frac{1}n \expt \sum_{x=1}^n \sum_{j=0}^{\floor{(n-1)q/2t}} \lf(|\sig^n_G(x, t_{n, j + 1}) - \sig^n_G(x, t_{n, j})| - \frac{2(\pi + \ep/2)t}{n-1}\sqrt{1 - m^2_{0, q}(\sig^n_G(x, \cdot))} \rg)^+.
\end{align}
This inequality comes from using the convexity of the function $f(x) = x^+$ and the triangle inequality. We can now bound the distance $|\sig^n_G(x, t_{n, j + 1}) - \sig^n_G(x, t_{n, j})|$ by the number of swaps $Q_n(x, t_{n, j}, t_{n, j+1})$ made by particle $x$ in that interval. Then using that $Q_n(x, t, \cdot)$ is an increasing function and that $t_{n, j+1} \le t_{n, j}^+$, the right hand side above can be bounded by
\begin{align} 
\label{E:ready-to-Q-bd}
\frac{1}n \expt \sum_{x=1}^n \sum_{j=0}^{\floor{(n-1)q/2t}} \lf(\frac{2Q_n(x, t_{n, j}, t_{n, j}^+)}n - \frac{2(\pi + \ep/2)t}{n-1}\sqrt{1 - \lf[\sig^n_G(x, t_{n, j})\rg]^2} \rg)^+
\end{align}

\medskip

It is enough to show that for any $\de > 0$, there exists a $t \in [0, \infty)$ such that for large enough $n$, \eqref{E:ready-to-Q-bd} is bounded by $\de$. By time stationarity of sorting networks, it is enough to show that there exists some $t$ such that for all large enough $n$, the quantity
\begin{align*}
\scrF_n := \expt \sum_{x=1}^n  \lf(\frac{2Q_n(x, 0, \De_n)}n - \frac{2(\pi + \ep/2)t}{n-1}\sqrt{1 - \lf(\frac{2x}n - 1\rg)^2} \rg)^+
\end{align*}
is at most $t\de$. 
For $\al \in (-1, 1)$, let $j_{n, \al} = \floor{\frac{n(\al + 1)}2}$, and define the random variable 
$$
Z^n_{\al, t} = Q_n\lf(j_{n, \al}, 0, \De_n\rg)\indic \lf( Q_n\lf(j_{n, \al}, 0, \De_n\rg) < (\pi + \ep/2)t\sqrt{1 - \lf(\frac{2j_{n, \al}}n - 1\rg)^2} \rg).
$$
We can bound $\scrF_n$ in terms of the random variables $Z_{\al, t}^n$:
\begin{align}
\nonumber
\nonumber
\scrF_n &\le \frac{2}n \expt \sum_{x=1}^n  Q_n\lf(x, 0, \De_n\rg) \indic\lf( Q_n\lf(x, 0, \De_n\rg) \ge (\pi + \ep/2)t\sqrt{1 - \lf(\frac{2x}n - 1\rg)^2}\rg) \\
\label{E:Z-want}
&= 4t - \expt \int_{-1}^1 Z^n_{\al, t} d\al.
\end{align}
It remains to bound $\expt \int_{-1}^1 Z^n_{\al, t} d\al.$ Recall that in the local limit, that $Q(0, t)$ is the number of swaps made by particle $0$ in the interval $[0, t]$. Define the random variable
$$
Z(t) := Q(0, t + 1) \indic(Q(0, t + 1) < (\pi + \ep/2)t).
$$
We can think of $Z$ as a function on the product space $\scrA \X [0, \infty)$, where $\scrA$ is the space of swap functions. Thought of in this way, if $U_n \to U$ in $\scrA$, and $t_n \to t$, then $Z (U_n, t_n) \to Z(U, t)$ as long as particle $0$ does not swap in $U$ at time $t + 1$. 

\medskip

For any $t$, the probability that the local limit $U$ has a swap at time $t$ is $0$ by Theorem \ref{T:local} (iv). Therefore by the weak convergence in Theorem \ref{T:main-local}, since $2j_{n, \al}/n - 1 \to \al$ as $n \to \infty$ for any $\al \in (-1, 1)$, we get that 
$$
Z^n_{\al, t} \cvgd Z(\sqrt{1- \al^2}t).
$$
Therefore by the bounded convergence theorem, we have that
\begin{equation}
\label{E:Z-to-A}
 \lim_{n \to \infty} \int_{-1}^1 \expt Z^n_{\al, t} d\al = t  \int_{-1}^1 \expt Z(\sqrt{1 - \al^2}t) d\al.
\end{equation}
Now by Corollary \ref{C:av-swap-rate}, $Z(t)/t \to \int |S(0) - y|d \mu(y)$, and so by the bounded convergence theorem and Corollary \ref{C:av-swap-rate} again, $\expt Z(t)/t \to 8/\pi.$ Therefore we have that
$$
\int_{-1}^1 \frac{\expt Z(\sqrt{1 - \al^2}t)}t d\al \xrightarrow[\;\; t \to \infty \;\;]{} \frac{8}\pi \int_{-1}^1 \sqrt{1 - \al^2} d\al = 4,
$$
where the bounded convergence theorem is once again used to establish the limit. Combining the above convergence with \eqref{E:Z-to-A} implies that there exists a $t$ such that for all large enough $n$,
$$
\int_{-1}^1 \expt Z^n_{\al, t} d\al \ge (4 - \de)t.
$$
Using Fubini's Theorem and then plugging the above inequality into \eqref{E:Z-want} then gives that $\scrF_n \le t\de$ for large enough $n$, as desired.
\end{proof}

\begin{proof}[Proof of Theorem \ref{T:main}.]
By Lemma \ref{L:bad-box} and Markov's inequality, for any $\ep > 0$ $\mathand q \in [0, 1]$, we have that
\begin{equation}
\label{E:translate}
\lim_{n \to \infty} \prob \lf(|Y_n(q) - Y_n(0)| \le \pi q\sqrt{1 - m^2_{0, q}(Y_n)} + \ep \rg) = 1.
\end{equation}
Moreover, by time-stationarity of sorting networks, the above holds with any $p< q$ inserted in place of $0$. Now for any $p, q \in [0, 1]$ and $C \in \real$, the set $\{|f(p) - f(q)| \le C\}$ is closed in $\scrD$ under the uniform norm. Therefore since any subsequential limit $Y$ of $Y_n$ is supported on continuous paths by Lemma \ref{L:precompact},  by \eqref{E:translate},
$$
\prob \lf(\text{For all    } p, q \in \rat \cap [0, 1] \mathand k \in \nat, \text{ we have } \frac{|Y(q) - Y(p)|}{q - p} \le \sqrt{1 - m^2_{p, q}(Y)} + \frac{1}k \rg) = 1.
$$
Since $Y$ is almost surely continuous, this implies that
$$
\prob \lf(\text{For all    } s, t \in [0, 1], \text{ we have } \frac{|Y(t) - Y(s)|}{t - s} \le \pi\sqrt{1 - m^2_{t, s}(y)} \rg) = 1.
$$
This condition is equivalent to $Y$ being almost surely $\sqrt{1 - y^2}$-Lipschitz.
\end{proof}

\section{Elliptical support and sine curve trajectories at the edge}
\label{S:corollaries}

In this section, we use Theorem \ref{T:main} to prove Theorems \ref{T:main-3} and \ref{T:main-2}. Recall that $\eta^n_t$ is the permutation matrix measure at time $t$ in a uniform $n$-element sorting network.
Recall the statement of Theorem \ref{T:main-3}:

\begin{customthm}{1.5}
Let $t \in [0, 1]$, and let $\eta_t$ be a subsequential limit of $\eta^n_t$. Then the support of the random measure $\eta_t$ is almost surely contained in the support of $\mathfrak{Arch}_t$.
\end{customthm}

\begin{proof}
Fix $t$, and suppose that $\eta_t$ is the distributional limit of the subsequence $\eta^{n_i}_t$. Since the sequence $Y_n$ is precompact by Lemma \ref{L:precompact}, there must be a subsubsequence $Y_{n_{i_k}}$ which converges in distribution to a random variable $Y$ in $\scrD$. Then the support of the random measure  $\eta_t$ is almost surely contained in the support of the law of $(Y(0), Y(t))$. Therefore we just need to check that  $(Y(0), Y(t)) \in \supp(\mathfrak{Arch}_t)$ almost surely.

\medskip

For $x \in [-1, 1]$, let $\prob_{x}$ be the conditional distribution of $Y$ given that $Y(0) = x$. By Theorem \ref{T:main}, for almost every $x \in [-1, 1]$, 
\begin{equation}
\label{E:circle}
\prob_{x}(Y\text{ is }\pi\sqrt{1-y^2}\text{-Lipschitz}) = 1.
\end{equation}

Now if $y(t)$ is a $\sqrt{1 - y^2}$-Lipschitz path with $y(0) = -y(1) = x$, then $y$ is bounded by the solutions of the initial value problems $f'(t) = \pm \sqrt{1 - f^2(t)} ; f(0) = x$ and $f'(t) = \pm \sqrt{1 - f^2(t)} ; f(1) = - x$. Therefore for any $t \in [0, 1]$, we have that
\begin{equation}
\label{E:two-ineq}
x \cos (\pi t) - \sqrt{1- x^2} \sin (\pi t) \le y(t) \le x \cos (\pi t) + \sqrt{1- x^2} \sin (\pi t).
\end{equation}
By using that $\mathfrak{Arch}_t \eqd (X, X \cos(\pi t) + Z \sin( \pi t))$, where $(X, Z) \eqd \mathfrak{Arch}_{1/2}$, and by using that the support of $\mathfrak{Arch}_{1/2}$ is the unit disk, the above inequality implies that $(x, y(t)) \in \supp(\mathfrak{Arch}_t).$ Combining this with \eqref{E:circle} completes the proof.
\end{proof}

Again, recall the statement of Theorem \ref{T:main-2}.

\begin{customthm}{1.6}
Suppose that $Y$ is a subsequential limit of $Y_n$. Then for any $\ep > 0$,
\begin{align*}
\prob \lf( Y(0) \ge 1 - \ep \mathand ||Y(t) - \cos(\pi t)||_u \ge \sqrt{2\ep}\rg) &= 0, \qquad \mathand \\
\prob \lf(Y(0) \le -1 + \ep \mathand ||Y(t) + \cos(\pi t)||_u \ge \sqrt{2\ep}\rg) &= 0.
\end{align*}
\end{customthm}

\begin{proof}

By Theorem \ref{T:main}, we have that almost surely
\begin{equation*}
Y(0) \cos (\pi t) - \sqrt{1- Y^2(0)} \sin (\pi t) \le Y(t) \le Y(0) \cos (\pi t) + \sqrt{1- Y^2(0)} \sin (\pi t).
\end{equation*}
for every $t$. This is simply \eqref{E:two-ineq} applied to $Y$. Elementary calculus gives that $||Y(t) - \cos(\pi t)||_u \le \sqrt{2(1 - Y(0))}$ and similarly shows that $||Y(t) + \cos(\pi t)||_u \le \sqrt{2(1 - Y(1))}$. 
\end{proof}

\section{Open problems}

The subsequent paper \cite{dauvergne3} proves Conjecture \ref{CJ:sine-curves}, Conjecture \ref{CJ:matrices}, and the other sorting network conjectures from \cite{angel2007random}. This gives a full description of the global limit of random sorting networks. In this section, we give a set of conjectures that focus on refining the understanding of convergence to this limit. Some of these conjectures are implicit in other papers or pictures, or have arisen in previous discussions but were not written down.

\medskip

Recall that $\sig_G^n$ is an $n$-element uniform random sorting network in the global scaling. Let $j \in \{1, \ddd n\}$, and consider the random complex-valued function
$$
Z^n_j(t) = e^{\pi i t} \lf[\sig^n_G(j, t) + i\sig^n_G(j, t + 1/2) \rg], \qquad t \in [0, 1/2].
$$
For a fixed $t$, $(Z^n_1(t), \ddd Z^n_n(t))$ is the set of points in the scaled permutation matrix for $\sig^n(\cdot, t + 1/2)(\sig^n)^{-1}(\cdot, t)$ after a counterclockwise rotation by $2\pi t$. The random vector-valued function $F^n(\cdot) = (Z^n_1(\cdot), \ddd, Z^n_n(\cdot))$ then gives a ``halfway permutation matrix evolution" for $\sig^n$ modulo uniform rotation (see Figure \ref{fig:window}). Conjecture \ref{CJ:sine-curves} implies that
$$
\max_{j \in [1, n]} \max_{s, t \in [0, 1]} |Z^n_j(t) - Z^n_j(s)| \cvgp 0 \qquad \mathas n \to \infty.
$$
Figure \ref{fig:window} suggests that the size of the fluctuations for each of the functions $Z^n_j$ is of order $n^{-1/2}$, and that the size is inversely proportional to the density of the Archimedean distribution at the point $Z^n_j(0)$. This leads to the first conjecture. 

\medskip

\begin{figure}
   \centering
   \includegraphics[scale= 1]{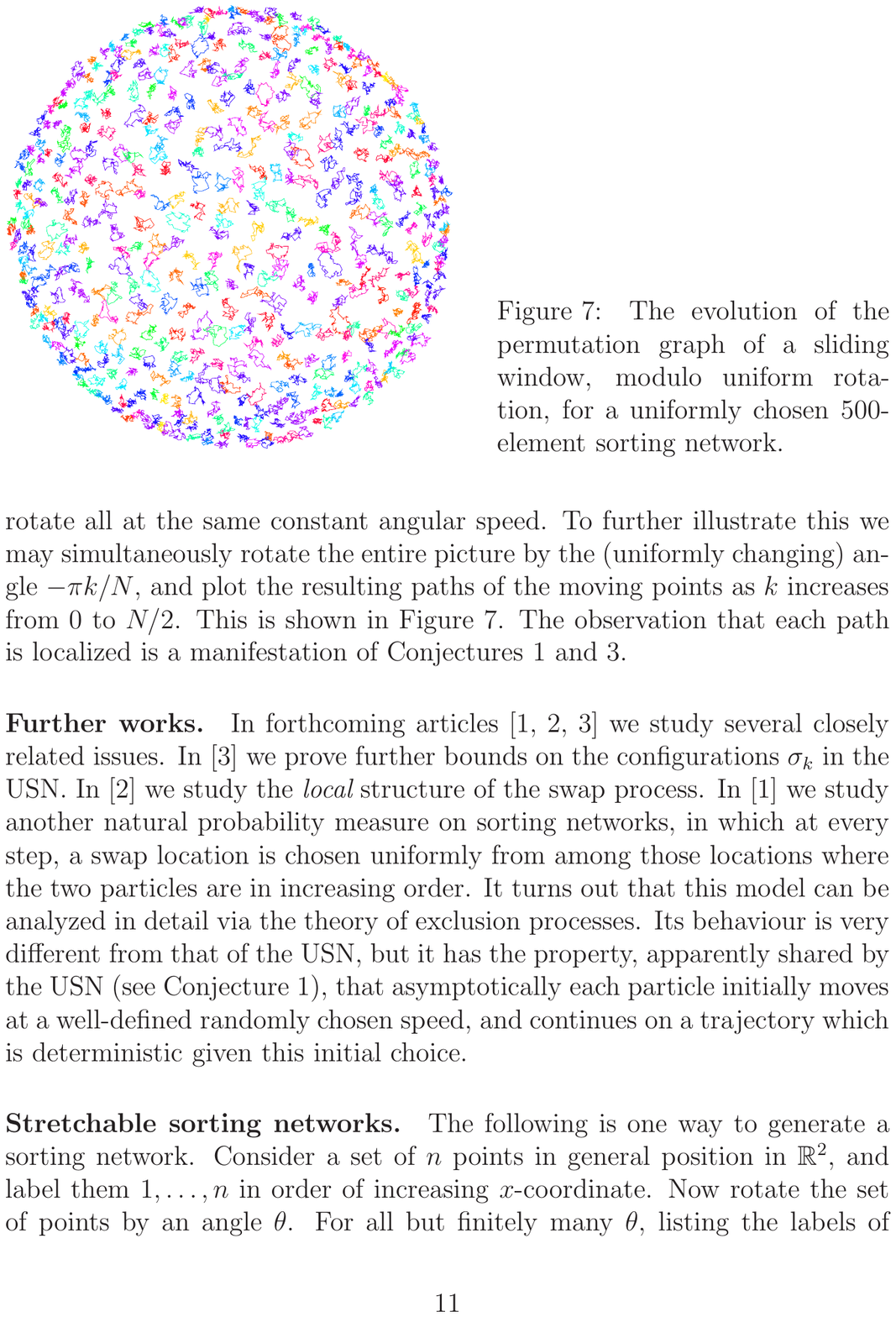}
   \caption{Images of the functions $Z^{500}_j(t)$. All paths are localized, and the distribution of these localized paths within $[-1, 1]^2$ is given by $\mathfrak{Arch}_{1/2}$. This figure originally appeared in \cite{angel2007random}.}
   \label{fig:window}
\end{figure}

\begin{conj}
\label{CJ:fluct}
Let $U$ be a uniform random variable on $[0, 1]$, independent of all the random sorting networks $\sig^n$. For each $n$, let $J_n$ be a uniform random variable on $\{1, \ddd n \}$, independent of $\sig^n$ and $U$.
\begin{enumerate}[nosep,label=(\roman*)]
\smallskip
\item The sequence of random variables $\{ n\Var(Z^n_{J_n}(U) \; | \; J_n, \sig^n) : n \in \nat\}$ is tight.
\item There exist independent random variables $X_1, X_2$ such that
\begin{equation*}
\lf(n\Var(Z^n_{J_n}(U) \; | \; J_n, \sig^n), |Z^n_{J_n}(0)| \rg) \cvgd (X_1\sqrt{1 - X_2^2}, X_2).
\end{equation*}
\end{enumerate}

\end{conj}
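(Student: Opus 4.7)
The first observation is that the twist factor $e^{\pi i t}$ is chosen precisely so that an exact sine curve trajectory $\sig^n_G(j,\cdot) = A\sin(\pi\cdot + \Theta)$ produces a constant $Z^n_j(t) = iAe^{-i\Theta}$, with $|Z^n_j(0)| = A$. Hence $Z^n_j$ records exactly the deviation from the best-fitting sine curve, the amplitude random variable $X_2$ is the limit of the sine curve amplitude of particle $J_n$, and $\Var(Z^n_{J_n}(U)|J_n, \sig^n)$ is (up to rotation) the squared $L^2(dt)$-norm of this deviation. The conjecture then asserts that the sine-curve deviation is exactly of Brownian size $n^{-1/2}$, with the $L^2$-mass given by an independent random factor $X_1$ times the geometric factor $\sqrt{1-X_2^2}$.

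The natural route is through the local limit $U$. A particle sampled at scaled position $\alpha \in (-1,1)$ sees, locally, the process $U$ with time rescaled by $\sqrt{1-\alpha^2}$ (Theorem \ref{T:local}), with asymptotic speed $S$ drawn from $\mu$ (Theorem \ref{T:main-local}). The first key step is to prove a quenched invariance principle for $U$: conditional on $S(x)=c$, the centered trajectory $U(x,s)-U(x,0)-cs$, rescaled by $\sqrt{s}$, converges as a process in $s$ to a Brownian motion with some variance $\sigma^2(c)$. Because $U$ has stationary time increments (Theorem \ref{T:time-stat}), one should be able to approach this via Kipnis--Varadhan-type martingale approximations, with the conditioning on $S(x)$ handled by working over the tail $\sigma$-algebra that generates the asymptotic speed.

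Given such a quenched CLT, the passage to the global scale is routine: an interval of global time $\Delta t$ at global position $y$ corresponds to local time of order $(n/2)\sqrt{1-y^2}\,\Delta t$, so the global fluctuation accumulates standard deviation of order $n^{-1/2}(1-y^2)^{1/4}\sigma(c)\sqrt{\Delta t}$. Averaging the squared fluctuation along the sine curve of amplitude $A=X_2$ and integrating against $U\sim \text{Unif}[0,1/2]$ yields a quantity of order $n^{-1}\int_0^1 \sqrt{1-A^2\sin^2(\pi t + \Theta)}\sigma^2(c(t))\,dt$, and one expects that after averaging over the phase this collapses to the advertised form $n^{-1}\sqrt{1-X_2^2}\cdot X_1$. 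The independence of $X_1$ and $X_2$ should come from spatial mixing of the local limit (Theorem \ref{T:local}(i)): the CLT variance depends only on the speed environment sampled along the particle, which by mixing decouples from the global position that determines $X_2$.

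The main obstacle is the quenched invariance principle for $U$. Because the speed distribution $\mu$ is genuinely nontrivial (Corollary \ref{C:av-swap-rate}), the time-shift action is not ergodic, so any CLT must be formulated conditionally on the tail field generated by the asymptotic speeds. A softer and more accessible intermediate result would be part (i) of the conjecture alone: tightness of $n\Var(Z^n_{J_n}(U)|J_n,\sig^n)$ should follow from an $L^2$ second-moment bound obtained by combining the time-stationarity of sorting networks with Corollary \ref{C:av-swap-rate} applied to a mesoscopic number of local windows, thereby controlling $\int \|Z^n_j(t)\|^2\,dt$ uniformly. Pinning down the joint limiting law in part (ii) is substantially deeper and essentially requires the quenched CLT above.
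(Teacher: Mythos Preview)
This statement is Conjecture~\ref{CJ:fluct} from the paper's \emph{Open Problems} section; the paper does not prove it, nor does it claim to. There is therefore no proof in the paper to compare your proposal against.

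What you have written is not a proof but a heuristic outline, and you are largely honest about this (``one should be able to'', ``one expects'', ``the main obstacle is''). The first paragraph is correct and useful: the twist $e^{\pi i t}$ does make $Z^n_j$ constant along exact sine curves, and identifying $\Var(Z^n_{J_n}(U)\mid J_n,\sigma^n)$ with an $L^2$ deviation from the sine curve is the right interpretation. The proposed route through a quenched CLT for the local limit is plausible at the level of a research program, but several of the steps you gloss over are genuinely open. In particular: (a) no invariance principle for $U$, quenched or annealed, is currently known; (b) the passage from a local CLT on windows of width $O(n)$ to control of the full global trajectory requires patching together $O(n)$ such windows, and the errors from this patching could easily dominate the $n^{-1/2}$ scale you are trying to resolve; (c) the claimed collapse of the integral $\int_0^1 \sqrt{1-A^2\sin^2(\pi t+\Theta)}\,\sigma^2(c(t))\,dt$ to the product form $X_1\sqrt{1-X_2^2}$ is not at all automatic and would require knowing something specific about how $\sigma^2$ and the speed along the trajectory depend on position.

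Your remark that part~(i) might be accessible via second-moment bounds and time-stationarity is the most concrete suggestion here, but even this is not carried out. As written, the proposal is a reasonable sketch of why the conjecture is believable, not a proof or the skeleton of one.
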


The second conjecture concerns the maximum value of the fluctuations. 

\begin{conj}
\label{CJ:fluct-max}
For any $\ep > 0$, 
\begin{equation*}
\max_{j \in [1, n]} \sup_{s, t \in [0, 1]} n^{1/2- \ep}|Z^n_{j}(t) - Z^n_j(s)| \to 0 \qquad \text{ in probability} \mathas n \to \infty.
\end{equation*}
\end{conj}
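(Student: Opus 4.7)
The plan is to lift the qualitative sine-curve convergence (Conjecture \ref{CJ:sine-curves}, proven in \cite{dauvergne3}) to a polynomial-rate statement that is uniform over both particles and times. The starting observation is a trigonometric identity: if $\sig^n_G(j, t) = A^n_j \sin(\pi t + \Theta^n_j)$ held exactly, then $\sig^n_G(j, t+1/2) = A^n_j\cos(\pi t + \Theta^n_j)$, and since $\sin\theta + i\cos\theta = i e^{-i\theta}$, one finds
\begin{equation*}
Z^n_j(t) = e^{\pi i t}\lf[A^n_j\sin(\pi t + \Theta^n_j) + i A^n_j\cos(\pi t + \Theta^n_j)\rg] = i A^n_j e^{-i\Theta^n_j},
\end{equation*}
which is constant in $t$. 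Writing $E^n_j(t) := \sig^n_G(j, t) - A^n_j\sin(\pi t + \Theta^n_j)$ for the sine-curve error, one then obtains directly $\sup_{s, t}|Z^n_j(t) - Z^n_j(s)| \le 4\|E^n_j\|_u$, so the problem reduces to quantitative sup-norm control of $E^n_j$.

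The plan is therefore to: (i) establish a quantitative form of sine-curve convergence with tail bounds
\begin{equation*}
\prob\lf(\|E^n_j\|_u > n^{-1/2 + \ep/2}\rg) \le n^{-K} \qquad \text{for every } K > 0, \; n \ge n_0(K, \ep), \; j \in \{1, \ldots, n\};
\end{equation*}
(ii) apply a union bound over $j \in \{1, \ldots, n\}$, which costs only a factor of $n$ and is swallowed by choosing $K$ large; (iii) discretize the supremum over $s, t \in [0, 1/2]$ to a grid of $n^2$ points and control intermediate values using the H\"older-$1/2$ bound of Theorem \ref{T:holder}, which produces an error of order $n^{-1}$ between adjacent grid points that is negligible compared to $n^{-1/2+\ep}$. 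Combining (i)--(iii) with the trigonometric identity yields Conjecture \ref{CJ:fluct-max}.

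The crux is step (i). Both the argument for Conjecture \ref{CJ:sine-curves} in \cite{dauvergne3} and the arguments in the present paper proceed by compactness and subsequential-limit analysis, and produce \emph{no} quantitative rate whatsoever. To reach a polynomial rate one almost certainly needs a genuinely new input. One natural route is a log-Sobolev or Talagrand-type concentration inequality for the uniform measure on sorting-network swap sequences, accessed through a Doob martingale obtained by revealing $K_1, K_2, \ldots$ one step at a time and bounding the differences using time-stationarity (Theorem \ref{T:basic-time-stat}). A second route is through the Edelman--Greene bijection: the staircase Young tableaux in bijection with sorting networks are uniform on a set of known cardinality, and the independence-like structure of RSK-type maps should allow concentration bounds via classical tools in the spirit of Pittel and Romik. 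Either approach will in addition need to upgrade pointwise-in-$t$ concentration to uniform-in-$t$ concentration through a dyadic chaining argument, and the combined requirement is currently beyond the toolkit of this paper.
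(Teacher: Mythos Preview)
The statement you are attempting to prove is \emph{Conjecture}~\ref{CJ:fluct-max}, which appears in the paper's open-problems section (Section~7). The paper does not prove it and does not claim to; it is listed as an open problem. There is therefore no ``paper's own proof'' to compare your proposal against.

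Your trigonometric reduction is correct and is in fact the reason the conjecture is natural: if $\sig^n_G(j,t)=A^n_j\sin(\pi t+\Theta^n_j)$ exactly then $Z^n_j(t)=iA^n_je^{-i\Theta^n_j}$ is constant, and the oscillation of $Z^n_j$ is controlled by $\|E^n_j\|_u$ as you write. (A minor redundancy: once you have $\sup_{s,t}|Z^n_j(t)-Z^n_j(s)|\le 4\|E^n_j\|_u$, step~(iii) is already absorbed into the problem of bounding $\|E^n_j\|_u$; the discretization and chaining belong inside step~(i), not after it.)

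But you have correctly diagnosed your own proposal: step~(i) is the entire conjecture, and neither this paper nor \cite{dauvergne3} supplies any polynomial rate for sine-curve convergence. Both arguments proceed by subsequential-limit identification and compactness, which are inherently non-quantitative. The concentration routes you sketch (Doob martingale on the swap sequence, or Edelman--Greene plus tableau concentration) are plausible research directions, but you have not carried either out, and neither is known to yield the required $n^{-1/2+\ep}$ rate uniformly in $j$. So what you have written is not a proof; it is an accurate assessment that the conjecture reduces to a quantitative version of Conjecture~\ref{CJ:sine-curves} that is currently open.
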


We now look at the local structure of the half-way permutation (see Figure \ref{fig:halfway}). Let $(x, y)$ be a point in the open unit disk, and consider the point process $\Pi_n(x, y) \sset \real^2$ given by
$$
\Pi_n(x, y) = \lf \{ \frac{\sqrt{n}}{\sqrt{2\pi}(1 - x^2 - y^2)^{1/4}}\lf(\sig^n_G(i, 0) - x,\sig^n_G(i, 1/2) - y\rg) : i \in \{1, \ddd n\} \rg\}.
$$
Heuristically, the $\sqrt{n}$ scaling, combined with the density factor of $\sqrt{2\pi}(1 - x^2 - y^2)^{1/4}$ from the Archimedean measure, should imply that for large $n$, the expected number of points of $\Pi_n(x, y)$ in a box $[a, b] \X [c, d] \sset \real^2$ is approximately $(b-a)(d-c)$.

\begin{conj}
\label{CJ:local-limit} 
There exists a rotationally symmetric, translation invariant point process $\Pi$ on $\real^2$ such that for any $(x, y)$ in the open unit disk, we have the following convergence in distribution:
$$
\Pi_n(x, y)  \cvgd \Pi.
$$
\end{conj}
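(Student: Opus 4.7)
The plan is to combine the sine curve structure of random sorting network trajectories (Conjecture \ref{CJ:sine-curves}, proven in \cite{dauvergne3}) with a fine analysis of residual fluctuations at scale $1/\sqrt{n}$. Under the Archimedean measure $\mathfrak{Arch}_{1/2}$, the density at a bulk point $(x, y)$ equals $\tfrac{1}{2\pi\sqrt{1-x^2-y^2}}$, and the normalization in the definition of $\Pi_n(x, y)$ is precisely chosen so that, if the points $(\sig_G^n(i, 0), \sig_G^n(i, 1/2))$ behaved like an i.i.d.\ sample from $\mathfrak{Arch}_{1/2}$, the limit would be a unit-intensity Poisson process.

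First I would parameterize each particle by its limiting sine-curve parameters $(A_i, \Theta_i)$ and decompose $(\sig_G^n(i, 0), \sig_G^n(i, 1/2)) = (A_i \sin\Theta_i,\, A_i \cos\Theta_i) + (E_i^n(0), E_i^n(1/2))$, where $E_i^n$ is the residual fluctuation around the fitted sine curve. The contribution to $\Pi_n(x, y)$ then splits into a macroscopic piece coming from sine parameters passing near $(x, y)$, plus the rescaled fluctuation field $\sqrt{n}\,(E_i^n(0), E_i^n(1/2))$ indexed by the relevant particles. After the change of variables from $(A, \Theta)$ to $(x, y)$ coordinates, the singular Archimedean density becomes a constant unit density at the chosen scale, so the key remaining object is the joint distributional limit of this fluctuation field.

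The second step is to establish a joint scaling limit for these fluctuations. This is essentially Conjecture \ref{CJ:fluct}, which asserts that $\sqrt{n}\, E_i^n$ has $O(1)$ variance scaling like $(1 - (A_i\sin\Theta_i)^2 - (A_i\cos\Theta_i)^2)^{1/4}$; this is exactly the factor built into the normalization of $\Pi_n(x, y)$. The most promising route is via the Edelman--Greene bijection to random standard Young tableaux of staircase shape, reducing the problem to a local limit theorem at the level-$N/2$ slice of such a tableau, where determinantal techniques (as initiated in \cite{gorin2017}) may be adapted. Universality of the limit $\Pi$ across bulk points $(x, y)$, together with its rotational and translational invariance, should then follow from homogeneity of the limiting determinantal structure in the bulk of the Archimedean support, analogously to bulk sine-kernel universality in random matrix theory.

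The main obstacle is the absence of a sharp local limit theorem for sorting networks at the required mesoscopic scale. Conjecture \ref{CJ:fluct} itself is open, and moving beyond single-particle variance control to a joint multi-particle scaling limit at scale $1/\sqrt{n}$ requires substantial new input. A reasonable preliminary target would be to establish tightness of $\Pi_n(x, y)$ as point processes and convergence of the one-point intensity to $1$; both should be accessible from the sine curve convergence combined with elementary moment bounds on the local particle density. Identifying $\Pi$ explicitly, most likely as a known determinantal process such as the sine or GUE bulk limit, would be a natural by-product of this approach.
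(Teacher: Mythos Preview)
This statement is Conjecture~\ref{CJ:local-limit} from the paper's Open Problems section; the paper does not prove it, so there is no ``paper's own proof'' to compare against. What you have written is not a proof but a research outline, and you essentially acknowledge this yourself: the argument hinges on Conjecture~\ref{CJ:fluct} (also open) and on a joint multi-particle local limit at scale $n^{-1/2}$ that does not currently exist in the literature. Decomposing into sine-curve parameters plus fluctuations is a reasonable heuristic framing, but the substantive content---tightness of $\Pi_n(x,y)$, convergence of correlation functions, identification of the limit---is precisely what is conjectured, not something you have supplied an argument for.

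In particular, the step ``universality of the limit $\Pi$ across bulk points $(x,y)$, together with its rotational and translational invariance, should then follow from homogeneity of the limiting determinantal structure'' is not an argument but a hope: no determinantal structure for the halfway permutation at this scale has been established, and the analogy with sine-kernel universality is suggestive rather than a route to a proof. Your proposed preliminary target (tightness plus intensity convergence) is sensible as a first step, but even that is not obviously accessible from the sine-curve convergence alone, since the latter gives only $o(1)$ control on trajectories while you need $O(n^{-1/2})$ control on point locations.
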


We also consider deviations of the permutation matrix measures $\eta^n_t$ (see \eqref{E:eta-n-t} for the definition) from the Archimedean path $\{\mathfrak{Arch}_t : t \in [0, 1]\}$.
%


\begin{conj}
Let $U$ be any open set in the space of probability measures on $[-1, 1]^2$ with the topology of weak convergence, containing each of the measures $\mathfrak{Arch}_t$. There exist constants $c_1, c_2 > 0$ such that for all $n$,
$$
\prob \lf( \text{There exists } t \in [0, 1] \text{ such that } \eta^n_t \notin U \rg) \le c_1e^{-c_2n^2}.
$$
\end{conj}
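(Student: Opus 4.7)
The plan is to reformulate the problem through the Edelman--Greene bijection and then invoke a functional large deviation principle for random standard Young tableaux of staircase shape at speed $n^2$. Under Edelman--Greene, a uniform $n$-element sorting network corresponds bijectively to a uniform standard Young tableau $T$ of shape $\delta_n = (n-1, n-2, \ldots, 1)$, and the time-$t$ permutation matrix $\eta^n_t$ is a deterministic, weakly continuous function of the rescaled growth profile $t \mapsto \lambda^n_t := \{c \in \delta_n : T(c) \le Nt\}/n$. Assuming (as follows from \cite{dauvergne3}) that this profile converges weakly to a deterministic Archimedean surface $\Lambda$ whose $t$-slices encode $\mathfrak{Arch}_t$, any open weak neighbourhood $U$ of the Archimedean path $\{\mathfrak{Arch}_t : t \in [0,1]\}$ pulls back to an open weak neighbourhood $V$ of $\Lambda$ in the space of non-decreasing Young surfaces, so the target event becomes $\{\lambda^n_\cdot \notin V\}$.

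For the LDP I would proceed as follows. Stanley's hook-length count gives $\log |\Omega_n| = \tfrac{1}{2} n^2 \log n + c_0 n^2 + o(n^2)$, and for a fine partition $0 = t_0 < t_1 < \cdots < t_k = 1$, the number of tableaux whose slices $(\lambda^n_{t_i})_{i \le k}$ lie in a prescribed small weak window is controlled by a product of skew hook-length formulas for the intermediate skew shapes $\lambda^n_{t_{i+1}}/\lambda^n_{t_i}$. Normalizing by $|\Omega_n|$ and passing to the limit identifies $\Lambda$ as the unique maximizer of a strictly concave entropy functional $\mathcal{I}$ on admissible surfaces, and yields an upper bound of the form $\exp(-\delta n^2)$ on the probability that the discrete profile lies outside any fixed weak neighbourhood of $\Lambda$, with $\delta = \mathcal{I}(\Lambda) - \sup_{\lambda \in V^c} \mathcal{I}(\lambda) > 0$. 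To pass from finitely many time slices to the uniform-in-$t$ statement in the conjecture, I would combine the monotonicity $\lambda^n_s \subseteq \lambda^n_t$ for $s \le t$ with the H\"older continuity of Theorem \ref{T:holder} (or a modest exponential strengthening thereof), ensuring that profiles at intermediate times remain close to their mesh values except on an event of probability at most $\exp(-c n^2)$.

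The hardest step is the functional LDP at speed $n^2$. Single-slice limit-shape and large deviation results for uniform standard Young tableaux of fixed shape are available (Pittel--Romik for rectangular tableaux, Sun for skew shapes, and related work of Biane--F\'eray--\'Sniady), but lifting these to a functional LDP for the full growth surface, at a topology fine enough to match arbitrary weak neighbourhoods of the Archimedean path and with matching upper and lower rate functions, requires a substantial variational analysis together with tight tableau-counting estimates. A potential alternative would exploit the determinantal local structure of Gorin--Rahman \cite{gorin2017} together with concentration inequalities for determinantal ensembles; however, no off-the-shelf result yields the global $n^2$-rate bound required here, and one must still verify that the unique global maximizer of the resulting variational problem is precisely the Archimedean surface. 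I expect that most of the genuinely new work lies in this last variational identification and in upgrading the finite-dimensional LDP to the functional, uniform-in-$t$ setting demanded by the conjecture.
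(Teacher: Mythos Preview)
This statement is listed in the paper as an \emph{open conjecture} (Section~7, ``Open problems''); the paper offers no proof, so there is nothing to compare your attempt against. What you have written is a research programme, not a proof, and you essentially acknowledge this yourself in the final paragraph.

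Two concrete gaps are worth naming. First, the assertion that ``the time-$t$ permutation matrix $\eta^n_t$ is a deterministic, weakly continuous function of the rescaled growth profile $t \mapsto \lambda^n_t$'' is doing almost all of the work and is not justified. Deterministic, yes: the full growth profile encodes the tableau $T$, and Edelman--Greene is a bijection. But the continuity you need is continuity of the inverse Edelman--Greene map in the \emph{scaling limit}, uniformly enough that an LDP on surfaces transfers to an LDP on permutation-matrix paths. The inverse bijection is a global promotion/evacuation-type algorithm; nothing in the literature (including \cite{dauvergne3}, which proceeds via the local limit and never establishes such a continuity statement) gives you this. In particular, two tableaux whose rescaled surfaces are close in a weak or Hausdorff sense can a priori produce sorting networks whose intermediate permutations differ substantially, because the bijection is sensitive to the fine combinatorics of how cells are filled, not just to coarse shape data.

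Second, the functional LDP at speed $n^2$ for the growth surface of a uniform staircase tableau is, as you say, not available. The single-slice results you cite (Pittel--Romik, Sun, Biane--F\'eral--\'Sniady) do not assemble into a process-level statement without substantial new work, and the exponential tightness needed to go from finite-dimensional marginals to the full path is not supplied by Theorem~\ref{T:holder}, which is only a convergence-in-probability statement with no exponential rate. So both the LDP input and the contraction step are genuinely open; your proposal identifies a plausible line of attack but does not close either gap.
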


Finally, Conjecture \ref{CJ:sine-curves} implies that if we know the location of particle $i$ after $\ep n^2$ swaps, then we know its trajectory. It is natural to ask to what extent this can be improved upon. 
The nature of the local limit suggests that the trajectory of particle $i$ should be determined after $O(n)$ steps.

\medskip

Again, let $J_n$ be a uniform random variable on $\{1, \ddd n \}$, independent of $\sig^n$. Let $S^n_t(i, \cdot)$ be the unique random curve of the form $A\sin(\pi t + \Theta)$ such that
$$
S^n_t(i, 0) =  \sig_G^n(i, 0) \;\; \mathand \;\; S^n_t(i, t) = \sig_G^n(i, t).
$$
\begin{conj}
For any $\ep > 0$, there exists a constant $C > 0$ such that 
$$
\liminf_{n \to \infty} \prob \lf( ||\sig^n_G(J_n, \cdot) - S^n_{C/n}(J_n, \cdot)||_u < \ep \rg) \ge 1 - \ep.
$$
\end{conj}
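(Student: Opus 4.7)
The plan is to leverage Conjecture~\ref{CJ:sine-curves} (established in \cite{dauvergne3}) together with the existence of local speeds (Corollary~\ref{C:speed-exist}) to argue that the endpoint $\sig^n_G(J_n, C/n)$ reveals the initial local speed of particle $J_n$, and that this speed together with $\sig^n_G(J_n, 0)$ determines the parameters of the limiting sine curve.

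First I would record the scaling dictionary. For $\al \in (-1, 1)$, conditional on $J_n$ being near $\floor{(\al+1)n/2}$, Theorem~\ref{T:local} identifies global time $t$ with local time $s = (n-1)t\sqrt{1-\al^2}/2$; hence a global interval $[0, C/n]$ corresponds to local time $s_C \approx C\sqrt{1-\al^2}/2$, and a short computation shows that the initial global slope of a trajectory equals, in the limit, $\sqrt{1-\al^2}\, S$, where $S$ is the local speed from Theorem~\ref{T:main-local}. Consequently, the limiting sine curve $A\sin(\pi t + \Theta)$ from Conjecture~\ref{CJ:sine-curves} satisfies $A\sin\Theta = \al$ and $A\pi\cos\Theta = \sqrt{1-\al^2}\, S$, so $(A, \Theta)$ is determined by $(\al, S)$.

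The core step is to show that for any $\de > 0$ there exists $C_0$ such that the empirical slope
$$
\hat s_n = \frac{\sig^n_G(J_n, C/n) - \sig^n_G(J_n, 0)}{C/n}
$$
lies within $\de$ of $\sqrt{1-\al^2}\, S$ with probability at least $1-\de$, for all $C \ge C_0$ and all sufficiently large $n$. By the scaling dictionary, $\hat s_n \cvgd \sqrt{1-\al^2}\cdot U(0, s_C)/s_C$ as $n \to \infty$, and Corollary~\ref{C:speed-exist} gives $U(0, s_C)/s_C \to S$ almost surely as $s_C \to \infty$. Combining these, with Egorov's theorem handling the $\al$-dependence away from a small neighborhood of $\pm 1$, produces the desired concentration bound. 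Once $\hat s_n$ is close to the true slope and $\sig^n_G(J_n, 0) \approx \al$, elementary trigonometry shows that the fitted sine curve $S^n_{C/n}(J_n, \cdot)$ has parameters close to those of the limiting sine curve, and is therefore uniformly close to the actual trajectory, which is itself $\ep$-close to the limiting sine curve by Conjecture~\ref{CJ:sine-curves}.

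The main obstacle is the order of limits. Corollary~\ref{C:speed-exist} gives only almost sure convergence of $U(0, s_C)/s_C$, so the natural argument produces a double limit (first $n \to \infty$, then $C \to \infty$), whereas the conjecture demands a single $C = C(\ep)$ that works uniformly in large $n$. Exchanging these limits requires a quantitative rate --- a large deviations bound or a sufficiently strong mixing estimate for the averaged local speed process --- which does not seem to follow from the ergodic-theoretic tools developed in Section~\ref{S:existence} alone. A secondary difficulty is the edge region where $|\al|$ is close to $1$: there $s_C = C\sqrt{1-\al^2}/2$ is small and the slope estimate degrades, so these particles must be handled separately, most naturally via Theorem~\ref{T:main-2}, which already constrains edge trajectories to be close to the cosine curves $\pm\cos(\pi t)$.
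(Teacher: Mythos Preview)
This statement appears as an open conjecture in the paper's final section; the paper offers no proof, so there is nothing to compare your attempt against. On your proposal itself: you correctly sense a gap, but I think you have misdiagnosed it. The quantifier structure of the conjecture (there exists $C$ such that $\liminf_n \prob(\cdots) \ge 1-\ep$) is exactly compatible with the order ``first pass to the local limit at fixed $C$, then choose $C$ large'', so no exchange of limits is required, and a quantitative rate on the convergence $U(0,s)/s \to S$ is not what is missing.

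The genuine gap is your assertion that the limiting sine curve from Conjecture~\ref{CJ:sine-curves} satisfies $A\pi\cos\Theta = \sqrt{1-\al^2}\,S$. Conjecture~\ref{CJ:sine-curves} provides only \emph{uniform} closeness of the trajectory to a sine curve, not closeness of derivatives; deducing $\hat s_n \approx \pi A^n_{J_n}\cos\Theta^n_{J_n}$ from it would require its uniform error to be $o(1/n)$, since $\hat s_n$ is formed by dividing a difference of trajectory values by $C/n$. The local limit does control the \emph{distribution} of $\hat s_n$ for fixed $C$, but says nothing about its relation to the global parameters $(A^n_{J_n},\Theta^n_{J_n})$ in the same realisation of $\sig^n$. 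What is actually needed is a joint local--global statement --- that the short-time slope furnished by the local limit coincides with the derivative of the global sine curve in the same sorting network --- and neither this paper nor a rate on the ergodic convergence supplies it. That missing link is presumably why the authors left the statement as a conjecture.
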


\subsection*{Acknowledgements} D.D. was supported by an NSERC CGS D scholarship. B.V. was supported by the 
Canada Research Chair program, the NSERC Discovery Accelerator
grant, the MTA Momentum Random Spectra research group, and the ERC 
consolidator grant 648017 (Abert). B.V. thanks Mustazee Rahman for several interesting and useful 
discussions about the topic of this paper.

\bibliographystyle{alpha}
\bibliography{LLRSNbib}

\end{document}